\newcounter{MODUS}
\newcommand{\DETAILS}[1]{{\mygreen{IN DETAIL:
      $\langle\hspace{-0.3ex}\langle$#1$\rangle\hspace{-0.3ex}\rangle$}}} \else
\newcommand{\DETAILS}[1]{} \fi
\newcommand{\qqquad}[0]{\qquad\qquad}
\newcommand{\BS}[0]{\backslash}
\newcommand{\nt}[0]{\notag}
\newcommand{\FA}[1]{\text{$\forall #1$}}
\newcommand{\HW}[1]{} 
\renewcommand{\t}{\tilde}
\newcommand{\p}{\partial}
\renewcommand{\d}{\delta}
\newcommand{\SUS}{\subset}
\newenvironment{TC} {\left \{\begin{array}{ll}} {\end{array} \right.}
\newcounter{pcounter}
\renewcommand{\AA}{{\mathcal A}}
\newcommand{\DD}{{\mathcal D}}
\newcommand{\EE}{{\mathcal E}}
\newcommand{\PP}{{\mathcal P}}
\newcommand{\R}{\mathbb{R}}
\newcommand{\N}{\mathbb{N}}
\newcommand{\C}{\mathbb{C}}
\newcommand{\alp}{\alpha}
\newcommand{\bet}{\beta}
\newcommand{\gam}{\gamma}
\newcommand{\eps}{\epsilon}
\newcommand{\lam}{\lambda}
\renewcommand{\phi}{\varphi}
\newcommand{\ome}{\omega}
\newcommand{\sig}{\sigma}
\newcommand{\Gam}{\Gamma}
\newcommand{\Ome}{\Omega}
\newcommand{\nin}{\not\in}
\DeclareMathOperator*{\DIV}{div}
\DeclareMathOperator*{\dist}{dist}
\DeclareMathOperator*{\spt}{spt}
\DeclareMathOperator*{\grad}{grad} 
\def \wto{\rightharpoonup}
\def\Xint#1{\mathchoice
{\XXint\displaystyle\textstyle{#1}}%
{\XXint\textstyle\scriptstyle{#1}}%
{\XXint\scriptstyle\scriptscriptstyle{#1}}%
{\XXint\scriptscriptstyle\scriptscriptstyle{#1}}%
\!\int}
\def\XXint#1#2#3{{\setbox0=\hbox{$#1{#2#3}{\int}$}
\vcenter{\hbox{$#2#3$}}\kern-.5\wd0}}
\def\dashint{\Xint-}
\newcommand{\upref}[2]{\hspace{-0.8ex}\stackrel{\eqref{#1}}{#2}} 
\newcommand{\lupref}[2]{\hspace{0ex} \stackrel{\eqref{#1}}{#2}} 
\definecolor{verylightblue}{rgb}{0.95, 0.95, 0.95}  
\definecolor{lightblue}{rgb}{0.7, 0.7, 1}
\definecolor{eqyellow}{rgb}{0.9375,0.8984,0.5469}
\definecolor{subeqyellow}{rgb}{1,0.9373,0.8353}
\definecolor{mygreen}{rgb}{0.3, 0.6, 0.3} 
\definecolor{verylightgreen}{rgb}{0.2, 0.8, 0.2} 
\definecolor{verydarkgreen}{rgb}{0, 0.2, 0}
\definecolor{darkgreen}{rgb}{0.85, 0.85, 0.85}  
\definecolor{mydarkgreen}{rgb}{0, 0.5, 0} 
\definecolor{camouflagegreen}{rgb}{0.47, 0.53, 0.42}
\definecolor{mybrown}{rgb}{0.85, 0.4, 0.3}
\definecolor{verylightbrown}{rgb}{0.98, 0.72, 0.58}
\definecolor{verydarkbrown}{rgb}{0.44, 0.26, 0.26}
\definecolor{orange}{rgb}{1, 0.5, 0}
\definecolor{BurntOrange}{rgb}{0.9,0.356,0.1}
\definecolor{burlywood}{rgb}{0.87, 0.72, 0.53}
\definecolor{amethyst}{rgb}{0.6,0.4,0.8}
\newcommand{\magenta}{\color{magenta}}
\definecolor{brightcerulean}{rgb}{0.11, 0.67, 0.84}
\definecolor{mydarkred}{rgb}{1,0.086,0.255}
\definecolor{RoseVYDP}{rgb}{0.84,0.086,0.255}
\definecolor{dgreen}{rgb}{0, 0.8, 0.5}     
\definecolor{CanaryBRT}{rgb}{1,0.76,0.26}
\definecolor{cyan}{rgb}{0, 1, 1}
\definecolor{verylightgray}{rgb}{0.95, 0.95, 0.95}
\definecolor{verylightgray}{rgb}{0.95, 0.95, 0.95}
\definecolor{verylightred}{rgb}{1, 0.8, 0.78}
\definecolor{verylightyellow}{rgb}{0.99, 0.98, 0.5}
\definecolor{lightgray}{rgb}{0.8, 0.8, 0.8}
\definecolor{cyanprocess}{rgb}{0.0, 0.72, 0.92}
\definecolor{darkcyan}{rgb}{0.0, 0.45, 0.95} 
\definecolor{darkelectricblue}{rgb}{0.33, 0.41, 0.47}
\definecolor{darkmidnightblue}{rgb}{0.0, 0.2, 0.4}
\definecolor{darkpowderblue}{rgb}{0.2, 0.2, 0.6}  
\definecolor{cadet}{rgb}{0.33, 0.41, 0.47}
\definecolor{bole}{rgb}{0.47, 0.27, 0.23}
\definecolor{browntraditional}{rgb}{0.59, 0.29, 0.0}
\definecolor{burgundy}{rgb}{0.5, 0.0, 0.13}
\newcommand{\black}{\color{black}}
\newcommand{\mygreen}{\color{mygreen}}
\newcommand{\ignore}[1]{{}}
\newcommand{\NN}[1]{\|#1\|}
\newcommand{\NNN}[2]{\|#1\|_{#2}}
\newcommand{\skp}[2]{\langle #1, #2 \rangle}
\newcommand{\NT}[1]{\|#1\|_{L^2}}
\newcommand{\NI}[1]{\|#1\|_{L^\infty}}
\newcommand{\NIL}[2]{\|#1\|_{L^\infty({#2})}}
\newcommand{\NP}[2]{\|#1\|_{L^{#2}}}
\newcommand{\NPL}[3]{\|#1\|_{L^{#2}(#3)}}
\newcommand{\cciL}[1]{C_c^{\infty}(#1)}
\newcommand{\VEC}[2][r]{
  \gdef\@VORNE{1}
  \left(\hskip-\arraycolsep%
    \begin{array}{#1}\vekSp@lten{#2}\end{array}%
  \hskip-\arraycolsep\right)}
\def\vekSp@lten#1{\xvekSp@lten#1;vekL@stLine;}
\def\vekL@stLine{vekL@stLine}
\def\xvekSp@lten#1;{\def\temp{#1}%
  \ifx\temp\vekL@stLine
  \else
    \ifnum\@VORNE=1\gdef\@VORNE{0}
    \else\@arraycr\fi%
    #1%
    \expandafter\xvekSp@lten
  \fi}
\DeclareMathOperator{\sgn}{sgn} \newcommand{\A}{\mathbb{A}}
\renewcommand{\hat}{\widehat} 
\newcommand{\hk}[1]{{\black{#1}}} 
\newcommand{\hkone}[1]{{\black{#1}}}
\newcommand{\hktwo}[1]{{\black{#1}}}
\newcommand{\hkk}[1]{{\black{#1}}}
\newcommand{\dsone}[1]{{\black{#1}}} %
\newcommand{\dstwo}[1]{{\black{#1}}} %
\newcommand{\ds}[1]{{\black{#1}}} %
\newcommand{\dsc}[1]{{\magenta{$\langle\hspace{-0.3ex}\langle$#1$\rangle\hspace{-0.3ex}\rangle$}}}
\newcommand{\dsk}[1]{{\black{#1}}} %
\renewcommand{\Re}{{\mathrm{Re}}}
\newcounter{margcount} 
\newcommand{\Rn}{\R^n}
\DeclareMathOperator{\supp}{spt}
\renewcommand{\NN}{\mathcal N}
\newcommand{\dx}[1]{\; \mathrm{d} #1} \newcommand{\sd}{\ : \:}
\newcommand{\dif}{\mathrm{div}} 
\newcommand{\bv}{\mathrm{BV}}
\newcommand{\f}{ {}_2F_1}
\newcommand{\hhom}{\dot{H}^{\frac{1}{2}}}
\newtheorem{theorem}{Theorem}[section]
 \newtheorem{lemma}[theorem]{Lemma}
 \newtheorem{remark}[theorem]{Remark}
\newtheorem{proposition}[theorem]{Proposition}
\begin{document}

\date\today 

\title{Asymptotic shape of isolated magnetic domains}



\author{Hans Knüpfer}
\address{Institut für Angewandte Mathematik, \\ Universität Heidelberg \& IWR, \\ Im Neuenheimer Feld 205, Germany}
\author{Dominik Stantejsky}
\address{Centre de Mathématiques Appliquées, UMR CNRS 7641, \\ École Polytechnique, IP-Paris \\ 91128 Palaiseau Cedex, France}

\parskip 6pt

\maketitle

\begin{abstract}
  We investigate the energy of an isolated magnetized domain
  $\Ome \SUS \Rn$ for $n=2,3$. In non--dimensionalized variables, the energy given by
  \begin{align*}  %
    \EE(\Ome) \ = \ \int_{\Rn} |\nabla \chi_{\Ome}| \ \dx x + \int_{\R^n} |\nabla \ds{h_\Omega}|^2 \ dx
  \end{align*}
  penalizes the interfacial area of the domain as well as the energy of the
  corresponding magnetostatic field.  Here, the magnetostatic potential $\ds{h_\Omega}$
  is determined by $\Delta \ds{h_\Omega} = \p_1 \chi_\Ome$, corresponding to uniform
  magnetization within the domain. We consider the macroscopic regime
  $|\Ome| \to \infty$, in which we derive compactness and $\Gam$--limit
  \hk{which is formulated in terms of} the cross--section\hk{al} \dstwo{area} of
  the anisotropically rescaled configuration. \dstwo{We then give the solutions
    for the limit problems.}

  \medskip

    \textbf{Keywords:}  Calculus of variations, $\Gamma$--convergence, nonlocal isoperimetric problem, magnetism

    \medskip
  
  \textbf{MSC2020:}
  49J45, 
  49S05, 
  49K20, 
  35Q60 

\end{abstract}


  

\section{Introduction}
Ferromagnetic materials exhibit the formation of \textit{magnetic domains},
i.e.\ regions with almost uniform magnetization oriented along certain
crystalline directions.  In the prototypical case of a sample with initial
uniform magnetization, the application of an external magnetic field can induce
a phase transformation. The initial time of the phase transformation is
characterized by the nucleation and growth of small isolated magnetic domains of
the new phase (see \cite[Figure 1]{Murakami2009}). The shape and energetics of these
domains is essential to understand the phase transformation of the magnetic
material and its related hysteresis
\cite{Geiss1996,Pokhil1997,Fabian1999,Desimone2002,DeSimone2000,Strukov1998}. \hkk{An
  isolated magnetic domain can also be formed} by a ferrofluidic droplet under
the application of an external field. In this paper, we investigate the shape
and energetics of \hkk{isolated magnetic domains} from the view point of
calculus of variations without \hkk{relying on any specific ansatz
  function}. We note that the shape and properties of magnetic domains in
solids and fluids have been studied in the physical literature both
experimentally as well as numerically \cite{Clark2013,Banerjee2001,Roodan2020},
for applications see e.g. \cite{Ochonski1989,Szczech2015,
  Uhlmann2002,Brusentsov2001,Newbower1973,Hess2015,Li2017}. However, in these
works the energy of the optimal domain shape is determined either numerically or
within a certain class of ansatz functions.

\medskip
  
We assume that the magnetization $m_\Ome \in L^2(\Rn,\Rn)$ points in direction
$e_1$ within the magnetic domain $\Ome \SUS \Rn$ and vanishes outside, i.e.
$m_\Ome = e_1 \chi_\Ome$ where $\chi_\Ome$ is the characteristic function of
$\Ome$. \hkk{By} Maxwell's equations the induced magnetostatic field
$h_\Ome \in L^2(\Rn,\Rn)$ is then given by the unique (distributional) solution
of
\begin{align}
  h_\Ome \ := \ -\nabla (-\Delta_{\Rn})^{-1} (\DIV m_\Ome). \qquad\qquad %
\end{align}
We note \hkk{that }(up to a factor 2), the same stray field is created if the
magnetization is $e_1$ in $\Ome$ and $-e_1$ \hkone{else} which would model the
situation of a uniformly magnetized domain in a\hkk{n infinitely extended}
uniaxial magnetic material. \hkk{O}ur choice of magnetization \hkk{also
}describes the situation of a uniformly magnetized ferrofluidic droplet. \hk{We
  assume that an external field with relative strength $\lam > 0$ is applied in
  $e_1$--direction.}  Also taking into consideration an energy related to the
interface of the magnetic domain,
the energy of the single magnetic domain in a non--dimensionalized setting is
given by
\begin{align} \label{intro-energy} %
  E(\Ome) \ = \ \int_{\Rn} |\nabla \chi_\Ome| \dx x %
  + \int_{\R^n} |h_\Ome|^2 \ \dx x + \lambda \int_{\R^n} \chi_\Ome \dx x.
\end{align} 
We note that the \dsone{last term in \eqref{intro-energy}, the Zeeman
  term,} is a trivial contribution to the energy \hkk{ by our assumption that
  the volume of the magnetic domain is given by
\begin{align}\label{vol-constraint}
  \int_{\Rn} \chi_\Ome \dx x \ = \ \mu \, ,
\end{align}
for some \hkk{fixed} $\mu > 0$.} We note that in the above model neither
boundary effects, nor material irregularities which might lead to pinning of the
domain are included. \DETAILS{ chapter 3.2.2, p.101.  Volume exchange Stiffness
  Energy $A\int (\grad m)^2 \dx V$ with a temperature dependent material
  constant $A$, the so called exchange stiffness constant. It has units of
  $J/m$. Zero temperature value $A(0)=k_B T_c/a_L$, where $k_B$ is Boltzmann's
  constant, $T_c$ Curie point and $a_L$ lattice constant, p.105 anisotropy
  energy $K_{cl}$ with units of $J/m^3$. p.109 External field (Zeeman) energy
  $E_H=-J_s\int H_\text{ex}\cdot m \dx V$ $J_s=$ Saturation magnetization, units
  of $T=N/(Am)$ p.109f stray field energy
  $E_d = \frac{1}{2}\mu_0 \int_{\R^n} H_d^2\dx V = -\frac{1}{2}\int_\Omega
  H_d\cdot J \dx V$ where $\dif(H_d) = -\dif(J/\mu_0)$ $\mu_0$ Vacuum
  permeability has units of $H/m$ or $N/A^2$ $H_d$ $A/m$} We note that the
physically relevant case corresponds to three dimensional domains
$\Omega$. However, for the mathematical interest, we formulate the model and
give some results in other dimensions as well. \hkk{We refer to Remark
  \ref{rem-one} for more details on the origin and derivation of the energy.}

\medskip

In \cite{Knupfer2017} it has been shown that minimizers of our energy functional
\eqref{intro-energy} with volume constraint \eqref{vol-constraint} exist in all
dimensions $n$ and for all prescribed masses $\mu\geq 0$.  For $2\leq n\leq 7$,
\hkk{\hkk{for any local minimizer of \eqref{intro-energy} there is a regular
    representative $\Ome$ for its positivity set which is open, bounded and has
    smooth boundary. Furthermore, in this case the sets $\Ome$,
    $\R^n\setminus\Ome$ and $\partial\Ome$ are connected.  We note that the
    positivity set is only defined a.e. and the regular representative is
    obtained after modification by a zero set}}.  Also, \hkk{the }scaling of
the \hkk{ground state} energy and some qualitative properties of the shape have
been investigated in \cite{Knupfer2017} for $n=3$ and in \cite{Stantejsky2018}
for $n \geq 4$. \hkk{We note} that for small volume $\mu$, the energy
\eqref{intro-energy} is a perturbation of the perimeter energy \hkk{and
  minimizers} have approximately the shape and energy of a ball with mass $\mu$.
We consider the opposite regime of large volume $\mu \gg 1$ where the impact of
the magnetostatic energy is essential to determine the \hkk{minimal energy and
  optimal }shape of the domain.  In this regime, minimizers of the energy
\eqref{intro-energy} get more and more elongated \hkk{and assume} the shape of a
thin needle.  More precisely, the scaling of the ground state energy is given by
$\inf_{|\Ome|=\mu} \EE(\Ome) \sim \mu^{\frac{2n-1}{2n+1}}$ for $n \geq 3$ for
$\mu \geq 1$.  The scal\hkk{ing}wise upper bound is obtained by ellipsoids with
length of order $L \sim \mu^\frac{3}{2n+1}$ and radius of order
$R \sim \mu^\frac{2}{2n+1}$ (cf. \cite{Stantejsky2018}).  In this paper, we
rescale the variables accordingly to maintain a bounded configuration. For the
rescaled energy, we establish a compactness result (Theorem
\ref{thm-compactness}) and a limit energy in the framework of
$\Gam$--convergence (Theorem \ref{thm-gamma}). The limit energy is formulated in
terms of the horizontal cross--section \dstwo{area} of the rescaled domain. The
limit energy is local \dstwo{for $n=3$} and can be solved algebraically for both
dimensions $n=2,3$ (Theorem \ref{thm-limit}). Interestingly, for $n = 2$ the
solution of the rescaled limit model yields the cross--section \dstwo{area} of
an exact ellipse \hkk{and is close to an ellipse for $n = 3$}.

\medskip
 
\hkone{We give some remarks about the relation to the underlying physical
  models:}
\begin{remark}[\hkone{Relation to }Landau--Lifshitz energy] \label{rem-one} %
  The energy of a ferromagnetic body \hkone{$U \SUS \R^3$} is given by the
  Landau-Lifshitz energy \cite[Ch. 3.2]{HS-Book}
  \begin{align} \label{E-phys} %
    \hkone{\EE_{LL}[m] \ =} \ \int_{\hkone{U}} \left( A |\hkone{\nabla} m|^2 + K
    (1-m_1^2)^2 - J_s H_\mathrm{ext}\cdot m \right) \dx x +
    \frac{\mu_0}{2}\int_{\mathbb{R}^3} |H|^2 \dx x.
\end{align}
The terms in \eqref{E-phys} in order of appearance are \hkone{as follows: The
  \emph{exchange energy} favors alignment of neighboring spins. The
  \emph{anisotropy energy} favours an alignment with certain crystal lattice
  directions; in our case we consider a uniaxial material where the directions
  $\pm e_1$ are preferred. The \emph{external field energy} (also called Zeeman
  energy) describes the energy associated with an external magnetic field
  $H_\text{ext}$}. The constant $J_s$ is the so called saturation
magnetization. Finally, the last integral on the right hand side of
\eqref{E-phys} is the \emph{stray field energy} (or magnetostatic energy) where
\hk{$H$} is the \emph{demagnetization} or \emph{stray field energy} and $\mu_0$
is the vacuum permeability. In bulk samples one observes large magnetic
  domains with uniform magnetization, separated by sharp one--dimensional
  transition layers (so called Bloch walls) of thickness of order
  $\ell_{trans} := \sqrt{A/K}$ with rapid rotation of the magnetization
  \cite[Sec. 3.6.1]{HS-Book}.

  \medskip

  From $|m| = 1$ we get the well--known estimate
  $a |\nabla m|^2 + b (1-m_3^2) \geq 2\sqrt{ab} |\nabla m_3|$. Hence,
  \begin{align}
    \EE_{LL}[m] \ \geq \  \int_U^3  \big( 2 \sqrt{A K} |\nabla m_1| - J_s H_\mathrm{ext}\cdot m \big) \ dx  + \frac{\mu_0}{2}  \int_{\mathbb{R}^3} |H|^2 \dx x.
  \end{align}
  We consider an external field of the form
  $H_{\textrm ext}= - \lam 8 (AK)^{3/2} \mu_0^{-2} e_1$ and non--dimen-sionalize
  the model by rescaling length and energy in units of
  $\ell_{res} = 4 \mu_0^{-1} \sqrt{AK}$ and $8 (AK)^{3/2} \mu_0^{-2}$. With the
  sharp interface assumption $m \in \{ \pm e_1 \}$ we arrive at
  \eqref{intro-energy}. This sharp interface approximation is valid if the width
  of the interfacial layers in \eqref{E-phys} is much smaller than the domain
  size \cite{AnzellottiBaldoVisintin-1991}. Note that we consider the
  renormalized energy where the infinite contribution of the Zeeman energy
  outside the domain is neglected. In view of \eqref{def-R-L-3d} our results
  (for $n = 3$) are hence physically relevant \hkk{for}
  sufficiently large domain\hkk{s} in the sense that $\ell_{trans} \ %
  \ll \ (V/\ell_{res}^3)^{\frac 27} (\ln V/\ell_{res}^3)^{-\frac 17}$, where $V$
  is the volume of the domain.
\DETAILS{ We resclae length by $x_old = \ell x_{new}$. Hence,
    \begin{align}
      \EE_{LL}[m] \ \geq \ 2  \ell^2 \sqrt{A K}  \int_\R^3  \big( |\nabla m_1| - J_s H_\mathrm{ext}\cdot m \big) \ dx  + \frac{\mu_0 \ell^3}{2}  \int_{\mathbb{R}^3} H_\Omega^2 \dx x.
    \end{align}
    With the choice $\ell = \frac 4{\mu_0} \sqrt{AK}$ we then obtain
  \begin{align}
    E(m) \ := \ %
    \int_{\Rn} |\nabla m| \dx x %
    + \int_{\R^n} |h(m)|^2  \ \dx x \ %
    + \lam  m_1 \big) \ dx    \leq \ \frac{\mu_0^2}{8 (AK)^{\frac 32}} \EE_{LL}[m]  .
  \end{align}
  with $\lam := - \frac{h_{\textrm ext} \mu_0^2}{8 (AK)^{\frac 32}}$.  The above
  estimate is expected to be sharp when the width of the interfacial layers is
  much smaller than the domain size. The width of the transition layer in the
  initial model is $\sqrt{A/K}$. The volume in the initial model is
  $V := \ell^3 \mu$. The width of the needle domain for $n = 3$ in the initial
  variables hence is
  \begin{align}
    \t R \ %
    := \ R_\mu \ell \ %
    = \ \mu^{\frac 27} (\ln \mu)^{-\frac 17} \ell \ %
    = \ \big(\tfrac {V}{\ell^3} \big)^{\frac 27} \Big(\ln \big( \tfrac
    {V}{\ell^3} \big) \Big)^{-\frac 17} \ell.
  \end{align}

  Our results are for $n = 3$ are hence relevant in the regime
  \begin{align}
    K \mu_0  \ %
    \ll \ \Big(\frac {V\mu^3}{(AK)^{3/2}} \Big)^{\frac 27} \Big(\ln \Big( \frac
    {V \mu^3}{(AK)^{3/2}} \Big) \Big)^{-\frac 17} 
  \end{align}
  where $V$ is the volume of the magnetic domain. We note that the energy of the
  Bloch wall more precisely depends on the orientation of the transition layer
  which is not accounted for in the above calculation (the check comment
  above). } 
\end{remark}
\hkone{\begin{remark}[Relation to ferrofluidic droplets] %
    A ferrofluid is a colloidal liquid consisting of ferromagnetic particles
    suspended in a surrounding fluid \cite{Rosensweig-Book}. The ferromagnetic
    particles are coated with a surfactant which inhibits agglomeration.  These
    fluids are interesting for applications since they can be moved by the
    application of an external field in which case fascinating droplet shape
    patterns may occur. Correspondingly, the associated magnetic energy is given
    by the exchange energy, Zeeman energy and the magnetostatic energy as in
    \eqref{E-phys}. Furthermore, for the total energy of the system we assume an
    interfacial energy at the boundary of the ferrofluidic droplet. For our
    model to apply we assume a constant density of the magnetic dipoles within
    the ferrofluidic droplet which then leads to the energy
    \eqref{intro-energy}.
\end{remark}
}
\medskip

\textbf{Notation.}  We write $X \lesssim Y$ if there exists a universal constant
$C>0$ such that $X\leq C Y$. Analogously, we also use $\gtrsim$.  If a constant
depends on a parameter $p$, we write $C_p$. For $x\in\R^n$ we write
$x=(x_1,x')$, $x'=(x_2,...,x_n)$ and analogously $\nabla=(\partial_1,\nabla')$.
The volume of the $n$--dimensional unit ball is denoted by $\ome_n$. The vector
space of all functions of bounded variation is denoted $\bv$.  For the perimeter
of a Lebesgue measurable set $E\subset\R^n$, we write $\PP(E)$.  The set $E$ is
said to have finite perimeter if $\PP(E)<\infty$
i.e. $\chi_E\in\bv(\R^n,\{0,1\})$ where $\chi_E$ is the characteristic function
of $E$.  The Fourier Transform and its inverse are given by
\begin{align}
  \widehat{f}(\xi) \ := \ \frac 1{(2\pi)^{\frac n2}} \int_{\R^n} f(x) e^{-i x\cdot\xi} \dx x, \, \qquad\qquad %
  f(x) = \frac 1{(2\pi)^{\frac{n}{2}}} \int_{\R^n} \widehat f(\xi) e^{i x\cdot\xi} \dx\xi.
\end{align}
We recall that the homogeneous $H^{s}$--norm for functions $f : \R \to \R$
is given by
\begin{align}
  \NNN{f}{\dot H^{s}}^2 \ = \ \int_{\R} |\hkone{\xi_1}|^{2s} |\widehat{f}(\xi_1)|^2 \dx\xi_1.
\end{align}

\section{Setting and statement of results} \label{sec-results}

\textbf{Setting and rescaled model.} In the limit of large $\mu$ we expect
minimizers to have the \hkk{approximate} shape of elongated ellipsoids. It is
hence convenient to work in \dsone{anisotropically} rescaled variables. For
prescribed volume $\mu > 1$ (cf. \eqref{vol-constraint}) we define
\begin{align} \label{trafo-u} u(\tilde{x}) \ := \ \chi_\Ome \Big(\frac{\t
    x_1}{R_\mu}, \frac{\t x'}{L_\mu} \Big).
\end{align}
The parameters $R_{\mu}, L_{\mu} > 0$ are given
by
\begin{align} \label{def-R-L-2d} 
R_{\mu} &= \mu^{\frac 13} , &&L_\mu = \mu^{\frac 23} &&\text{for $n=2$,} \\ %
R_{\mu} &= \mu^{\frac 27}(\ln \mu)^{-\frac{1}{7}}, &&L_\mu = \mu^{\frac37}(\ln\mu)^{\frac{2}{7}}, &&\text{for $n=3$,} \label{def-R-L-3d} \\ %
R_{\mu} &= \mu^{\frac{2}{2n+1}} , &&L_\mu = \mu^{\frac{3}{2n+1}}, &&\text{for $n\geq 4$.} \label{def-R-L-nd}
\end{align}
These length scales correspond to the optimal scaling of diameter $R_\dsk{\mu}$ and
length $L_\dsk{\mu}$ for the \hkone{magnetic} domain which are obtained by minimizing
the energy in the class of ellipsoidal configurations with volume $\mu$. For the aspect ratio, we write
\begin{align} \label{def-eps} 
  \eps \ := \ \frac{R_\mu}{L_\mu} \ < \ 1\, .
\end{align}
We correspondingly rescale our energy by $R_{\mu}^{n-2}L_{\mu}$, according to
the energy of these ellipsoidal configurations.  With
the change of variables \eqref{trafo-u} and expressing the nonlocal part of the
energy in terms of Fourier variables, the renormalized and rescaled energy takes
the form
\begin{align*}
  E_\eps^{(n)}[u] \ %
  := \ \frac{E(\Ome) - \lambda |\Ome|}{R_{\mu}^{n-2}L_{\mu}} \  %
  = \ P_\eps^{(n)}[u] + N_\eps^{(n)}[u], 
\end{align*}
where the parts of the energy related to perimeter and nonlocal interaction are
\begin{align}
  P_\eps^{(n)}[u]  \ &:= \ \int_{\R^n} \sqrt{|\nabla' u|^2 + \eps^2 |\partial_1 u|^2 } \dx x, \label{def-Peps} \\
  N_\eps^{(n)}[u] \  &:= \ \gam_n(\eps) \int_{\R^n} \frac{{\xi}_1^2}{\eps^2 {\xi}_1^2 + |{\xi}'|^2} |\widehat{u}({\xi})|^2 \dx\xi. \label{def-Neps}
\end{align}
Here,  the prefactor in front of the nonlocal energy in \eqref{def-Neps} is given by
\begin{align}
  \gam_2(\eps) \ %
  := \ \eps, \qquad 
  \gam_3(\eps) \ := \ \frac{1}{7|\ln \eps| - 3 \ln(\ln \mu)} \qquad \text{and} \ \ \gam_n(\eps):=1 \text{ for } n\geq 4.
\end{align}
We note that the limit $\eps \to 0$ is equivalent to $\mu \to \infty$ and that
$\gam_3(\eps) = \frac 17 |\ln(\eps)|^{-1} + o(1)$ as $\eps \to 0$. In view of the
assumption $|\Ome| = \mu$ and since $R_\mu^{n-1}L = \mu$, the function $u$ has
unit mass. Correspondingly, we consider the set of admissible functions
\begin{align} \label{def-AA} %
  \AA \ = \ \left\{ u\in \bv(\R^n,\{0,1\}) \sd \int_{\R^n} u(x) \dx x \ = \ 1
  \right\}
\end{align}
and set $E_\eps^{(n)}[u] = \infty$ if $u \nin \AA$.

\medskip

\textbf{Main results.}  %
In \cite{Nolte2018,Knupfer2017,Stantejsky2018}, existence of minimizers with
prescribed volume has been established for the energy \eqref{intro-energy}.
We \hkk{recall} the scaling of the ground state energy:
\begin{theorem}[Scaling of minimal energy
  \hkk{\cite{Nolte2018,Knupfer2017,Stantejsky2018}}] \label{thm-scaling} %
  Let $n \geq 2$. Then we have
  \begin{align} \label{thm-scaling-eq_inf} %
    c_{n} \ \leq \ \inf_{u \in \AA} E_\eps^{(n)}[u]
    \ \leq \ C_{n} \, \qquad\qquad %
    \text{for all $\eps \in (0,1)$}
  \end{align}
  for some constants $c_{ n}$, $C_{ n}>0$ which only depend on $n$.
\end{theorem}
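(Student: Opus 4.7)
The bound is achieved by an $\eps$-independent competitor. I would fix $u_* \in \AA$, for instance the characteristic function of a ball of unit volume. Since $\eps \leq 1$, one has $\sqrt{|\nabla' u_*|^2 + \eps^2|\partial_1 u_*|^2} \leq |\nabla u_*|$ and hence $P_\eps^{(n)}[u_*] \leq \PP(\spt u_*)$. For the nonlocal part, substituting $\xi' = \eps\eta'$ in \eqref{def-Neps} turns the integral into
\begin{align*}
\eps^{n-1}\int_{\R^n} \frac{\xi_1^2}{\eps^2(\xi_1^2+|\eta'|^2)} \, |\widehat u_*(\xi_1,\eps\eta')|^2 \dx\xi_1 \dx\eta',
\end{align*}
and a direct inspection shows that this quantity grows as $\eps^{-1}$ for $n=2$, as $|\ln\eps|$ for $n=3$ (logarithmic divergence coming from $|\eta'|\to 0$), and stays $O(1)$ for $n\geq 4$. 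These rates are exactly offset by $\gam_n(\eps)$, yielding $N_\eps^{(n)}[u_*] \leq C_n$.

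\textbf{Lower bound.} I would pass back to the original unrescaled variables, where the claim becomes $\EE(\Ome) - \lambda|\Ome| \gtrsim R_\mu^{n-2} L_\mu$ for every $\Ome \SUS \Rn$ with $|\Ome|=\mu$. To each admissible $\Ome$ I attach a transverse scale $R$ and a longitudinal scale $L$ with $|\Ome| \lesssim L R^{n-1}$ (for instance via maximum cross-section area and $x_1$-diameter). Slicing in the $e_1$-direction together with the $(n{-}1)$-dimensional isoperimetric inequality applied to almost every section produces $\PP(\Ome) \gtrsim L R^{n-2}$. A Fourier-type estimate on $\int |h_\Ome|^2$, exploiting the symbol $\xi_1^2/|\xi|^2$ and the fact that $\widehat{\chi}_\Ome$ is concentrated on the dual box $\{|\xi_1|\lesssim L^{-1},\, |\xi'|\lesssim R^{-1}\}$, yields a second lower bound $g_n(R,L)$ matching the stray-field energy of a needle-shaped configuration (with a log factor in $n=3$). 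Summing the two lower bounds and minimizing over $(L,R)$ subject to $LR^{n-1}\gtrsim \mu$ reproduces the exponents \eqref{def-R-L-2d}--\eqref{def-R-L-nd}, and dividing by $R_\mu^{n-2}L_\mu$ gives $c_n > 0$.

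\textbf{Main obstacle.} The hardest step is the nonlocal lower bound in dimension three, where a naive Fourier bound misses the logarithm. Producing the factor $|\ln\eps|^{-1}$ built into $\gam_3$ requires a careful cylindrical decomposition of frequency space around the resonance $\eps\xi_1 \sim |\xi'|$, as in \cite{Knupfer2017}. The case $n=2$ is also delicate because the nonlocal and perimeter terms are of the same order: one must control them simultaneously and use $\bv$ lower semicontinuity together with the volume constraint to exclude minimizing sequences that would spread mass over an unbounded region.
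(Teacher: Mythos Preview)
Your upper bound is correct in spirit but differs from the paper's construction. The paper (Proposition~\ref{prp-upper} for $n=2,3$ and Lemma~\ref{lem-app-scaling} for $n\geq 4$) does not use a ball; it takes a rotationally symmetric set built from a smooth profile $A\in C_c^\infty(\R)$ and exploits the Bessel expansion of Lemma~\ref{lem-app-ft} together with the exact integrals in Lemma~\ref{lem-intform}. Your ball competitor should also work, but your substitution $\xi'=\eps\eta'$ does not by itself justify the claimed rates: after the change of variables the integrand still involves $|\widehat u_*(\xi_1,\eps\eta')|^2$, and you must separately handle the large-$\eta'$ region using the Bessel decay $|\widehat{\chi_{B}}(\xi)|\lesssim |\xi|^{-(n+1)/2}$. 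The paper avoids this by splitting frequency space into $\DD=\{|\xi'|\geq 1+|\xi_1|^2\}$ and its complement and treating each piece with the explicit formulas.

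Your lower bound follows the strategy of the cited references \cite{Knupfer2017,Stantejsky2018} rather than the paper's own in-text argument, and the sketch has real gaps. Defining $L$ as the $x_1$-diameter and $R$ via the maximal cross-section is problematic: a set of finite perimeter and volume need not be bounded, so $L$ may be infinite, and the inequality $|\Ome|\lesssim LR^{n-1}$ is not automatic. The referenced papers instead use \emph{integral} notions of length and radius (e.g.\ moments of $\chi_\Ome$), precisely to avoid this. More seriously, the stray-field lower bound you describe (``$\widehat\chi_\Ome$ is concentrated on the dual box'') is heuristic; turning it into a rigorous inequality with the correct logarithm in $n=3$ is the entire content of the argument, and you have essentially deferred it.

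The paper's in-text route is quite different: everything stays in the rescaled variables. The key tool is Lemma~\ref{lem-N-low}, which lower-bounds $N_\eps^{(n)}[u]$ by an expression involving only the cross-section $A=\A[u]$ after a Taylor expansion of $\widehat u$ about $\xi'=0$. Combined with Lemma~\ref{lem-intform}, this yields (for $n=2,3$) the liminf inequality of Theorem~\ref{thm-gamma}, and the lower bound in Theorem~\ref{thm-scaling} follows by continuity of $\eps\mapsto\inf_\AA E_\eps^{(n)}$ (under the bounded-support assumption). For $n=4$, Lemma~\ref{lem-app-scaling} argues by contradiction: if $\inf E_\eps^{(4)}\to 0$ then \eqref{N_ngeq4}--\eqref{proof-cpt-N_geq4} force $\|A_\eps\|_{L^2}\to 0$, contradicting $\|A_\eps\|_{L^1}=1$. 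This cross-section approach is what you are missing; it replaces the geometric $R,L$ scales by the single scalar function $A$, and the anisotropic Fourier multiplier is handled via the explicit hypergeometric identities rather than a dual-box heuristic.
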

\begin{figure}
\centering
     \begin{subfigure}[c]{0.35\textwidth}
         \centering
\begin{tikzpicture}[scale=1.7]
\draw[->] (0,0) -- (0,0.2) node[above] {$x_1$};
\draw[->] (0,0) -- (0.2,0) node[right] {$x'$};
\draw[line width=1, ->] (0,1) -- (0,1.5) node[right] {$H_{\textrm{ext}}$};
\definecolor{colD}{RGB}{200,200,200}
\draw[colD, fill=colD] (1,0.0) arc [start angle=-80, delta angle=160, radius=0.5cm] arc [start angle=180-80, delta angle=160, radius=0.5cm];

\node[colD!40!black] at (1,0.5) {$m=e_1$};
\node[black] at (2,0.5) {$m=-e_1$};

\end{tikzpicture}
         \caption{}
         \label{fig:fig-domain-field-a}
    \end{subfigure}
	\begin{subfigure}[c]{0.3\textwidth}
         \centering
\begin{tikzpicture}[scale=1.7]
\definecolor{colD}{RGB}{200,200,200}
\draw[colD, fill=colD] (1,0.0) arc [start angle=-55, delta angle=110, radius=1.2cm] arc [start angle=180-55, delta angle=110, radius=1.2cm];
\node[colD!40!black] at (1,0.9) {$m=e_1$};
\node[black] at (2,0.9) {$m=-e_1$};
\end{tikzpicture} 
         \caption{}
         \label{fig:fig-domain-field-b}
     \end{subfigure}
     \begin{subfigure}[c]{0.3\textwidth}
         \centering
\begin{tikzpicture}[scale=1.7]
\definecolor{colD}{RGB}{200,200,200}
\draw[colD, fill=colD] (1.25,0.0) arc [start angle=-40, delta angle=80, radius=3.2cm] arc [start angle=180-40, delta angle=80, radius=3.2cm];
\node[colD!40!black] at (1.25,1.75) {$m=e_1$};
\node[black] at (2.5,1.75) {$m=-e_1$};
\end{tikzpicture}
\caption{}
         \label{fig:fig-domain-field-c}
     \end{subfigure}
     \caption{Schematic representation of needle domains $\Omega$ with
       increasing volume
       \eqref{fig:fig-domain-field-a}-\eqref{fig:fig-domain-field-c}. The vector
       on the left shows the direction of the applied external magnetic
       field. The ratio between length (along $x_1$) of the domain and its
       thickness increases with the volume.}
  \label{fig-domain-field}
\end{figure}
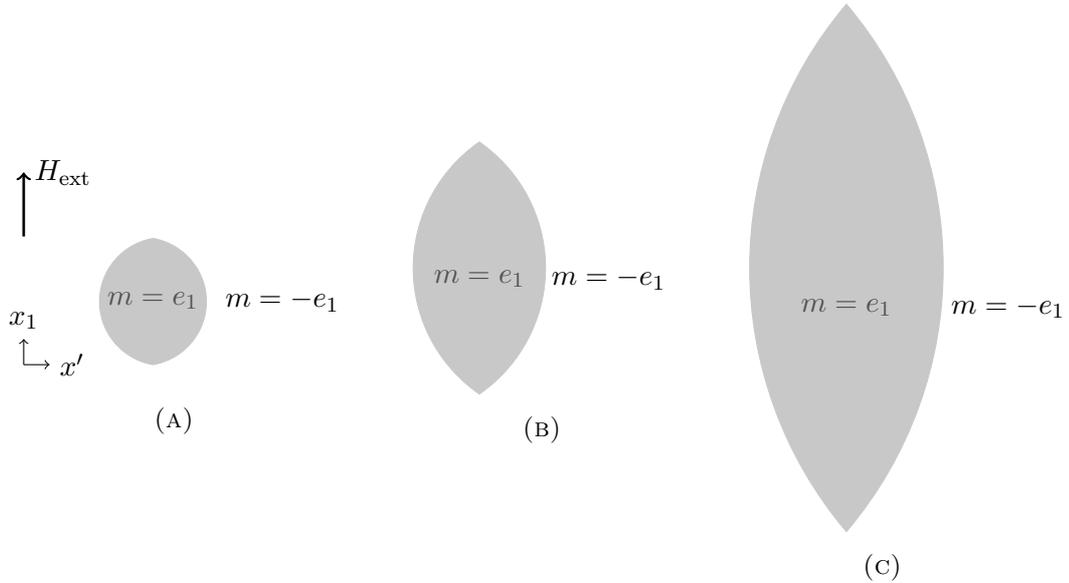
The proof of this statement is given in \cite{Nolte2018,Knupfer2017} for dimension $n=3$ and \cite{Stantejsky2018} for general $n\geq 3$ where the scaling laws \eqref{def-R-L-2d}-\eqref{def-R-L-nd} have been derived for suitable (integral) notions of diameter and length of the configuration.
Theorem \ref{thm-scaling} shows that our choice of $R_\mu$ and $L_\mu$ provides
us with a nontrivial limit as $\eps\rightarrow 0$.  In dimensions $2$ and $3$, under the assumption of uniformly bounded support of $u$, the statement of Theorem \ref{thm-scaling} \dsone{also follows} from Theorem \ref{thm-gamma} which additionally yields the asymptotically leading order constant in the expansion of the energy.  
Indeed, it is easy to see that by continuity of $\eps\mapsto\inf_{\AA} E_\eps^{(n)}$, the infimum of $E_\eps^{(n)}$ on $\AA$ 
\dstwo{is of order one} for $\eps\rightarrow 0$.  Taking an (almost)
minimizing sequence $u_\eps$ for $\eps\rightarrow 0$, we can apply
Theorem \ref{thm-gamma} to obtain the required bounds.  \dsone{In the case
  $n\geq 4$ we do not know the $\Gamma-$limit of the energy $E_\eps^{(n)}$
  but with the ideas presented in Lemma \ref{lem-app-scaling}, we are able to
  prove the lower bound in Theorem \ref{thm-scaling} with a different argument
  and we also obtain the upper bound in arbitrary dimension $n\geq 4$.  The main
  difference between $n=2,3$ and $n\geq 4$ is the property that
  $\gamma_n(\eps)\rightarrow 0$ (for $\eps\rightarrow 0$) only for
  $n=2,3$. This translates to the fact that in dimension $n\geq 4$ the leading
  order term does no longer consist of a single expression (cf. Proposition
  \ref{prp-upper} and Lemma \ref{lem-app-scaling}).  } 

\medskip

The limit problem is formulated in terms of the cross--sectional area
\begin{align} \label{def-cross} %
  \mathbb{A}[u] : \R \to [0,\infty), \qquad \mathbb{A}[u] \ := \ \int_{\R^{n-1}}
  u(\cdot,x') \dx x'.
\end{align}
In view of our unit mass assumption on $u$ in \eqref{def-AA}, we then have for $A:=\mathbb{A}[u]$ that
  \begin{align} \label{A-unit} %
    \int_\R A(x)\dx x \ = \ 1\, .
  \end{align}
  For \hkk{sequences} with uniformly bounded energy we have the following
  compactness \hkk{result:}
\begin{theorem}[Compactness] \label{thm-compactness} %
  Let $n \in \{ 2,3,4 \}$. Let $\rho > 0$ and let
    \begin{align}
      \text{$s \in \Big[0,\frac{1}{14}\dsk{\Big]}$ \ if $n = 2$, \quad %
      $s \in \Big[0,\frac 1{10}\Big)$ \ if $n = 3$ \quad and \quad %
      $s \in \Big[0,\frac{1}{22}\Big)$ \ if $n = 4$. }
    \end{align}
    Then for any sequence $u_\eps\in \AA$ with $E_\eps^{(n)}[u_\eps] \leq C$ and
    $\spt u_\eps \subset B_\rho(0)$ \FA{\eps > 0} we have
  \begin{align} \label{top-conv} %
    \A[u_\eps] \ \wto \ A \quad\text{ in } H^s(\R) \quad \text{as $\eps \to 0$}
  \end{align}
  (after selection of a subsequence) for some $A \ \in \ \AA_0^{(n)}$ where 
  \begin{align*}
    \AA_0^{(n)} \ := \ \Big\{ A\in X \sd  \text{$A$ satisfies \eqref{A-unit} }\Big\}
  \end{align*}
  and where $X = H^{\frac 12}(\R)$ if $n = 2$, $X = H^{1}(\R)$ if $n=3$ and
  $X = \bigcap_{s < \frac 7{10}}H^s(\R)$ if $n= 4$.
\end{theorem}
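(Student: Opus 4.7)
The plan is to work in Fourier variables, using the identity $\widehat{\A[u]}(\xi_1) = (2\pi)^{(n-1)/2} \widehat u(\xi_1, 0)$, which reduces control of $\A[u_\eps]$ in $H^s(\R)$ to estimating $\widehat u_\eps$ along the hyperplane $\xi'=0$ in a $|\xi_1|^{2s}$-weighted $L^2$ norm. A uniform $L^2$-bound on $\A[u_\eps]$ is immediate since $u_\eps$ is a characteristic function with support in $B_\rho$ and unit mass, so $\A[u_\eps]$ is uniformly bounded in $L^\infty([-\rho,\rho])$; the task is therefore to extract a uniform $H^s$-bound from the energy, after which Banach--Alaoglu together with the compact support in $[-\rho,\rho]$ will give the weak convergence to some $A$ along a subsequence.

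The core estimate I would prove is a trace-by-averaging inequality. For each $\xi_1$, pick a scale $\sigma(\xi_1)>0$ and a mollifier $\psi_{\sigma(\xi_1)}$ on $\R^{n-1}$ with $\int\psi_\sigma=1$ supported in the ball of radius $\sigma$. Writing
\[
\widehat u(\xi_1,0)=\int \widehat u(\xi_1,\xi')\psi_\sigma(\xi')\,d\xi'+\int\big(\widehat u(\xi_1,0)-\widehat u(\xi_1,\xi')\big)\psi_\sigma(\xi')\,d\xi',
\]
I would control the main term via Cauchy--Schwarz with weight $w(\xi')=(\eps^2\xi_1^2+|\xi'|^2)^{-1}$ so that, after integration against $|\xi_1|^{2s}\,d\xi_1$, the $\widehat u$-factor is absorbed into $N_\eps^{(n)}[u_\eps]/\gam_n(\eps)$ while the $\psi_\sigma$-factor reduces to an explicit algebraic expression in $\sigma$, $\eps$, $\xi_1$. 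The error term is estimated through the compact support of $u_\eps$ in $B_\rho$, which gives the pointwise bound $|\widehat u(\xi_1,\xi')-\widehat u(\xi_1,0)|\lesssim \rho|\xi'|$, refined in $L^2(\R_{\xi_1})$ by Plancherel applied to the partial $x'$-Fourier transform of $u_\eps$.

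To close the argument I would optimize $\sigma(\xi_1)$, taking it of the form $\sigma\sim \eps|\xi_1|^b$ so that the algebraic prefactor of the main term balances $\gam_n(\eps)$ across the range of $\xi_1$. For the very high-frequency tail $|\xi_1|\gg \eps^{-1}$, the complementary estimate $|\widehat u(\xi_1,0)|\leq C\eps^{-1}|\xi_1|^{-1}$ arising from the perimeter bound $\|\partial_1 u_\eps\|_{\MM}\leq \eps^{-1}P_\eps^{(n)}$ is used to replace the averaging, and a frequency split at a scale $K(\eps)\sim\eps^{-\alpha}$ (with $\alpha=\alpha(n,s)$) ties the two regimes together. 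The thresholds $s\leq 1/14$ for $n=2$, $s<1/10$ for $n=3$ and $s<1/22$ for $n=4$ arise precisely where the $\eps$-powers of the main-term and error-term contributions cancel, taking into account $\gam_2(\eps)=\eps$, $\gam_3(\eps)\sim 1/|\ln\eps|$ and $\gam_4(\eps)=1$. Once the uniform $H^s$-bound is in hand, the unit-mass constraint $\int A=1$ passes to the limit by testing against $\chi_{[-\rho,\rho]}\in H^{-s}(\R)$, while the stronger regularity $A\in X$ follows by lower semicontinuity of the nonlocal part of $E_\eps^{(n)}$, which in the limit $\eps\to 0$ identifies with the $\dot H^{1/2}$-control of $A$ (for $n=2$), the $\dot H^1$-control of $A$ (for $n=3$), and the scale-family of $H^s$ bounds for $s<7/10$ (for $n=4$).

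The main obstacle I anticipate is the fine Fourier optimization in the second and third steps: three competing scaling laws---the $\gam_n(\eps)$-weighted nonlocal contribution, the $\eps^{-1}|\xi_1|^{-1}$ tail coming from the BV bound, and the $\sim |\xi'|$ trace error from compact support---must be balanced simultaneously, and every refinement in one direction costs regularity in another. Obtaining the exact thresholds $1/14$, $1/10$, $1/22$ is really a careful bookkeeping exercise rather than a conceptual leap.
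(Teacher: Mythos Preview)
Your approach differs from the paper's in a way that creates a genuine gap for $n=3$. The paper does not average $\widehat u$ over a region $|\xi'|\leq\sigma(\xi_1)$ with $\sigma$ depending only on $\xi_1$; it restricts instead to the \emph{$\widehat A$--dependent} set $|\xi'|\leq\eta|\widehat A(\xi_1)|^{1/2}$ and bounds the resulting cross term by $\eta^2\rho^2$ times the main contribution. This yields a direct lower bound $N_\eps^{(n)}\gtrsim\gamma_n(\eps)\int|\widehat A(\xi_1)|^2\int_{|\xi'|\leq\eta|\widehat A|^{1/2}}\frac{\xi_1^2}{\eps^2\xi_1^2+|\xi'|^2}\,d\xi'\,d\xi_1$, and the self--referential cutoff is what makes the estimate \emph{nonlinear}: after the inner integration one controls quantities like $\int|\widehat A|^{5/2}$ ($n=2$) or, on frequency shells, $\int|\widehat A|^3$ ($n=3$). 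These are converted to $H^s$ bounds by H\"older together with, for $n=3$, an iterative decomposition into intervals $I_k=\{\eps^{-\alpha_k}\leq|\xi_1|\leq\eps^{-\alpha_{k+1}}\}$ with $\alpha_{k+1}=\frac{4(\alpha_k+1-\eta)}{5+6s}$; the iteration terminates exactly for $s<\tfrac{1}{10}$.

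Your linear Cauchy--Schwarz scheme with $\sigma\sim\eps|\xi_1|^b$ can indeed reach the thresholds $\tfrac{1}{14}$ and $\tfrac{1}{22}$ for $n=2,4$, but it cannot recover the logarithm when $n=3$. The main term is bounded by $(N_\eps/\gamma_3)\sim|\ln\eps|\,N_\eps$ times a factor which, even for the optimal mollifier $\psi_\sigma\propto(\eps^2\xi_1^2+|\xi'|^2)^{-1}\chi_{|\xi'|\leq\sigma}$, is at least $|\xi_1|^{2s-2}/\ln(1+\sigma^2/(\eps^2\xi_1^2))$; at $|\xi_1|=1$ this forces $\sigma\gtrsim\eps^{1-\delta}$, and one then checks that every admissible choice of $\sigma(\xi_1)$ makes the error contribution scale no better than $\eps^4K^{2s+5}$, which is incompatible with the high--frequency requirement $K\gtrsim\eps^{-2/(1-2s)}$ for any $s>0$. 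Two smaller issues: your first--order error bound $\lesssim\rho|\xi'|$ is not enough even for $n=2$ --- a radial $\psi_\sigma$ kills the first--order term by symmetry, and it is the resulting second--order bound $\lesssim\rho^2\sigma^2$ on the averaged difference that one must use (your Plancherel refinement only controls the unweighted $L^2$ norm, which is circular for the $|\xi_1|^{2s}$--weighted estimate); and the passage to $A\in X$ by ``lower semicontinuity of $N_\eps$'' hides the same difficulty, since one first needs a uniform $\dot H^{(n-1)/2}$ bound on $(\widehat{A_\eps}\chi_{S_\eps})^\vee$ for an explicit good set $S_\eps$ with $\chi_{S_\eps}\to 1$, and this again rests on the nonlinear lower bound.
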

In particular, we have $\A[u_\eps] \to A$ in $L^2$ for all $2\leq n\leq 4$, see
\cite[Theorem 4.54]{Demengel2012}. We note that the regularity of the limit
space is better than the topology of the convergence in \eqref{top-conv}. To
have compactness in the topology of the limit space would require to rule out
oscillations on the frequency scale $\eps^{-1}$.  However, uniform control over
the energy does not seem to provide sufficient control in this regime. We also
note that the topology \eqref{top-conv} does not distinguish between two
configurations $u_{\eps,1}$ and $u_{\eps,2}$ if they have the same
cross--section \dstwo{area} and in particular is not \hk{even} Hausdorff.

\medskip

In the macroscopic limit $\eps \to 0$ and for dimensions $n \in \{ 2, 3 \}$, our
functionals $\Gam$--converge to \hktwo{limit} functionals given by
\begin{align}
  E_0^{(2)}[A] \ %
  &= \ 2\ \ds{|\{x\in\R\sd A(x)>0\}|} \ +\  \frac 12 \ \int_{\R} | |\partial_1|^{\frac 12} A(x_1)|^2 \dx x_1\, , \label{E0-2} \\ 
  E_0^{(3)}[A] \ %
  &= \ 2\sqrt{\pi} \int_\R \ \sqrt{A(x_1)}  \dx x_1 \ +\  \frac{1}{14\pi}\  \int_{\R} |\partial_1 A(x_1)|^2 \dx x_1, \label{E0-3}
\end{align}
for $A \in \AA_0^{(n)}$ and $E_0^{(n)}[A] = +\infty$ else. 
More precisely, we have:
\begin{theorem}[Gamma-limit] \label{thm-gamma} %
  Let $n \in \{ 2, 3 \}$. Then
  \begin{align}
    E_\eps^{(n)}  \ \stackrel{\Gam}{\longrightarrow} \ E_0^{(n)} \qquad\qquad
    \text{w.r.t to the topology \eqref{top-conv}. }
  \end{align}
In particular,
  \begin{enumerate}
  \item For any sequence $u_\eps\in \AA$ with uniformly bounded support such that \eqref{top-conv} holds for some $A \in \dsone{\AA_0^{(n)}}$, we have
    \begin{align} \label{gamma-liminf-eq} \liminf_{\eps\to 0}
      E_\eps^{(n)}[u_\eps] \ \geq \ E_0^{(n)}[A]. %
      \qquad\qqquad\text{(liminf estimate)}
    \end{align}
  \item For any $A\in \AA_0^{(n)}$, there is a sequence $u_\eps\in\AA$ with
    uniformly bounded support such that \eqref{top-conv} holds and
    \begin{align} \label{gamma-limsup-eq} \limsup_{\eps\to 0}
      E_\eps^{(n)}[u_\eps] \ \leq \ E_0^{(n)}[A]. %
      \qquad\qqquad\text{(limsup estimate)}
    \end{align}
  \end{enumerate}
\end{theorem}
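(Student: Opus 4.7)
The energy naturally splits as $E_\eps^{(n)} = P_\eps^{(n)} + N_\eps^{(n)}$, and I treat the two parts independently. The perimeter part will be controlled by slicing in the $x_1$-coordinate, which reduces it to $(n-1)$-dimensional isoperimetric estimates on horizontal cross-sections. The nonlocal part will be analysed via the Fourier change of variables $\xi' = \eps \xi_1 \eta$, which transfers the $\eps$-singularity of the kernel onto the argument of $\hat u_\eps$ and makes the concentration of the kernel near $\{\xi' = 0\}$ explicit.

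\textbf{Liminf inequality.} From $\sqrt{|\nabla' u|^2 + \eps^2 |\partial_1 u|^2} \geq |\nabla' u|$ and Fubini,
\begin{align*}
  P_\eps^{(n)}[u_\eps] \ \geq \ \int_{\R^n} |\nabla' u_\eps| \dx x \ = \ \int_{\R} \PP(E_{x_1}^\eps) \dx x_1,
\end{align*}
where $E_{x_1}^\eps := \{x' \in \R^{n-1} : u_\eps(x_1, x') = 1\}$ has $(n-1)$-measure $\A[u_\eps](x_1)$. For $n=2$, any set of positive finite measure in $\R$ has perimeter $\geq 2$; for $n=3$, the planar isoperimetric inequality yields $\PP(E_{x_1}^\eps) \geq 2\sqrt{\pi\, \A[u_\eps](x_1)}$. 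Combined with $\A[u_\eps] \to A$ in $L^2$ (Theorem \ref{thm-compactness}, hence a.e.\ along a subsequence) and Fatou, this produces the perimeter contributions in \eqref{E0-2}--\eqref{E0-3}. For the nonlocal part, the substitution $\xi' = \eps \xi_1 \eta$ in \eqref{def-Neps} gives
\begin{align*}
  N_\eps^{(n)}[u_\eps] \ = \ \gam_n(\eps)\, \eps^{n-3} \int_\R \int_{\R^{n-1}} \frac{|\xi_1|^{n-1}}{1+|\eta|^2}\, |\hat u_\eps(\xi_1,\eps\xi_1\eta)|^2 \dx \eta \dx \xi_1 .
\end{align*}
For $n=2$, $\gam_2(\eps)\eps^{-1}=1$ and pointwise $\hat u_\eps(\xi_1,\eps\xi_1\eta) \to (2\pi)^{-1/2}\widehat A(\xi_1)$ (justified by dominated convergence via the bounded-support hypothesis); integrating $\int_\R (1+\eta^2)^{-1}\dx\eta=\pi$ recovers $\tfrac12\| |\partial_1|^{1/2}A\|_{L^2}^2$. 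For $n=3$, the $\eta$-integral over $\R^2$ is logarithmically divergent, with upper cutoff $|\eta| \lesssim (\eps\rho|\xi_1|)^{-1}$ arising from the spatial support radius $\rho$; the tuning $\gam_3(\eps)\cdot 2\pi \ln(1/\eps) \to 2\pi/7$ absorbs this divergence and produces $\tfrac{1}{14\pi}\|\partial_1 A\|_{L^2}^2$.

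\textbf{Limsup inequality.} First assume $A$ is smooth and compactly supported. Take $u_\eps := \chi_{\Ome_\eps^A}$ with $\Ome_\eps^A := \{(x_1,x_2) : |x_2| \leq A(x_1)/2\}$ for $n=2$, and with $\Ome_\eps^A := \{(x_1,x') : \pi|x'|^2 \leq A(x_1)\}$ for $n=3$. By construction $\A[u_\eps]=A$, so \eqref{top-conv} holds trivially. The perimeter energy is computed directly, using $\sqrt{|\nabla' u_\eps|^2 + \eps^2 |\partial_1 u_\eps|^2} \leq |\nabla' u_\eps| + \eps|\partial_1 u_\eps|$: the main term gives $2|\{A>0\}|$ or $2\sqrt{\pi}\int\sqrt{A}$, and the correction is of order $\eps \int |A'| \dx x_1 = O(\eps)$. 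The nonlocal part is computed by exploiting the symmetry of $\Ome_\eps^A$ to express $\hat u_\eps$ explicitly, then inserting it into the change-of-variables formula; the computations match the limits identified in the liminf step. The general case $A \in \AA_0^{(n)}$ follows by mollification $A \mapsto A^\delta$ with $E_0^{(n)}[A^\delta] \to E_0^{(n)}[A]$ (straightforward since $E_0^{(n)}$ is continuous on smooth functions) and a standard diagonal extraction.

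\textbf{Main obstacle.} The most delicate step is the nonlocal liminf. The singular $\eps$-dependence of the kernel rules out any direct weak lower semicontinuity, and the passage to the pointwise limit of $\hat u_\eps(\xi_1, \eps\xi_1\eta)$ must contend with potential oscillations of $\hat u_\eps$ on the scale $\eps^{-1}$ — precisely the regime in which the topology of Theorem \ref{thm-compactness} provides only weak information. For $n=3$ an additional subtlety is the matching between the logarithm $|\ln\eps|$ in $\gam_3$ and the cutoff of the $\eta$-integral: the support radius $\rho$ enters as a correction of the form $\ln(\rho^{-1})/|\ln\eps|$ which must be shown to be subleading uniformly, and the specific structure of $\gam_3$ (including the $-3\ln\ln\mu$ term) is precisely what enables this matching.
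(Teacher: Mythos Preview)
Your perimeter arguments and your recovery sequence match the paper's. For the nonlocal liminf in dimension $n=2$, your change of variables $\xi'=\eps\xi_1\eta$ followed by Fatou is in fact a valid and cleaner alternative to the paper's route: since $\gamma_2(\eps)\eps^{-1}=1$ and $\int_\R(1+\eta^2)^{-1}\,d\eta=\pi$ is finite, pointwise convergence of $\hat u_\eps(\xi_1,\eps\xi_1\eta)$ suffices, and this follows from the bounded--support Lipschitz bound $|\hat u_\eps(\xi_1,\xi')-\hat u_\eps(\xi_1,0)|\lesssim\rho|\xi'|$ together with $\hat A_\eps(\xi_1)\to\hat A(\xi_1)$ for each $\xi_1$ (test the weak $L^2$ convergence of $A_\eps$ against $e^{-i\xi_1\,\cdot}\chi_{[-\rho,\rho]}$).

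For $n=3$, however, there is a genuine gap. Because $\gamma_3(\eps)\sim(7|\ln\eps|)^{-1}\to 0$, applying Fatou on any bounded $\eta$--region gives a lower bound of zero. To recover the correct constant you must integrate over $|\eta|\leq R_\eps$ with $R_\eps$ growing like $\eps^{-1}$, so that $\gamma_3(\eps)\int_{|\eta|\leq R_\eps}(1+|\eta|^2)^{-1}\,d\eta$ has a nontrivial limit. But at that scale the Lipschitz estimate gives only $|\hat u_\eps(\xi_1,\eps\xi_1\eta)-(2\pi)^{-1}\hat A_\eps(\xi_1)|\lesssim\rho|\xi_1|$, which is \emph{not} small, so you cannot replace $|\hat u_\eps|^2$ by $(2\pi)^{-2}|\hat A_\eps|^2$ on the region that matters, and the cross term is uncontrolled. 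Your ``Main obstacle'' paragraph correctly names the difficulty but the argument above it does not resolve it --- it is a heuristic for the value of the limit, not a lower bound.

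The paper's fix is Lemma~\ref{lem-N-low}: instead of the scale--invariant cutoff $|\xi'|\leq\eps|\xi_1|R_\eps$, it uses the $\xi_1$--adapted cutoff $|\xi'|\leq\eta|\hat A_\eps(\xi_1)|^{1/2}$. After averaging over spheres in $\xi'$ the Taylor remainder is $\lesssim\rho^2|\xi'|^2$, so under this cutoff the cross term becomes $\lesssim\rho^2\eta^2 N_{\eps,\eta,1}$ and is absorbed into the main term for small $\eta$. The inner integral is then computed explicitly (Lemma~\ref{lem-intform}), the $\xi_1$--integration is restricted to the set $S_{\eps,\eta}$ of \eqref{def-Seps} on which the logarithm is comparable to $|\ln\eps|$, and the argument closes via weak lower semicontinuity of the $\dot H^1$--norm using the weak $H^1$ convergence of $(\hat A_\eps\chi_{S_{\eps,\eta}})^\vee$ from Lemma~\ref{lem-com1}\ref{it-bound-h12}.
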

The notion of $\Gam$--convergence in particular ensures that minimizers
\hkk{(with uniformly bounded support)} converge to minimizers of the limit
functional: Indeed, by \eqref{gamma-limsup-eq} any sequence of minimizers
$u_\eps$ of $E_\eps^{(n)}$ verifies the hypothesis of Theorem
\ref{thm-compactness} and by lower semicontinuity \eqref{gamma-liminf-eq} it
converges (up to extracting a subsequence) to a minimizer $A$ of the limit
functional $E_0^{(n)}$.  \hktwo{We note that we need the assumption of uniformly
  bounded suppport in Theorem \ref{thm-gamma} and hence only characterize
  minimizing sequences with this property. In fact, we believe that minimizing
  sequences should have equibounded support since the minimal energy of the
  limit functional is sublinear as a function of the volume. However, a proof
  for this statement seems quite technical and is \hkk{ongoing work}. We note
  that \cite[Thm. 1.2]{Knupfer2017} \hkk{gives a characterization of an}
  integral version of the length and radius of configuration.}
\begin{figure}[h]  \centering
\begin{tikzpicture}[scale=3]
\draw[->] (0,0) -- (0,1.1) node[above] {$t$};
\draw[] (0.05,1) -- (-0.05,1) node[left] {$1$};
\draw[] (1,0.05) -- (1,-0.05) node[below] {$1$};
\node at (-0.05,-0.05) {$0$};
\draw[->] (0,0) -- (1.1,0);
\draw[dotted] (0,0.5) -- (1.732/2,0.5) node[right] {$R^{(2)}(t)$};
\draw[red, line width=1] (0,1) arc (90:00:1);
\end{tikzpicture}
  \hspace{6ex}
\begin{tikzpicture}[scale=3]
\draw[->] (0,0) -- (0,1.1) node[above] {$t$};
\draw[] (0.05,1) -- (-0.05,1) node[left] {$1$};
\draw[] (1,0.05) -- (1,-0.05) node[below] {$1$};
\node at (-0.05,-0.05) {$0$};
\draw[->] (0,0) -- (1.1,0);
\draw[dotted] (0,0.5) -- (0.87,0.5) node[right] {$R^{(3)}(t)$};
\pgfmathsetmacro{\C}{7.753}
\pgfmathsetmacro{\lam}{2.435}
\pgfmathsetmacro{\L}{0.3212}
\pgfmathsetmacro{\R}{3.728}

\draw[samples=2000,domain=0:\R,smooth,variable=\rho,red, line width=1] plot ({\rho/\R}, {1 - \L*( 7/(2*sqrt(\lam)^3) * rad(atan(7/sqrt(4*\lam*\C)))   + sqrt(\C)/\lam - 7/(2*sqrt(\lam)^3) * rad(atan((7-2*\lam*\rho)/sqrt(4*\lam*(\C+7*\rho-\lam*\rho^2)))) - sqrt(\C+7*\rho-\lam*\rho^2)/\lam )});

\draw[blue, dashed] (0,1) arc (90:00:1);
\end{tikzpicture}
  \caption{Plot for the solutions $R^{(n)}(t)$ in Theorem \ref{thm-limit} for
    $0\leq t\leq 1$ \hk{with associated radially symmetric and rescaled needle
      domain $\Ome = \{ |x'| \leq \rho(x_1)\}$ }. \hk{For t}he solution of the
    limit problem \hk{$\Ome$} for $n=2$ is precisely the unit disk\hk{, for
      $n=3$ it approximates the unit disk (indicated by the dashed line)}.}
    \label{fig-min}
\end{figure}
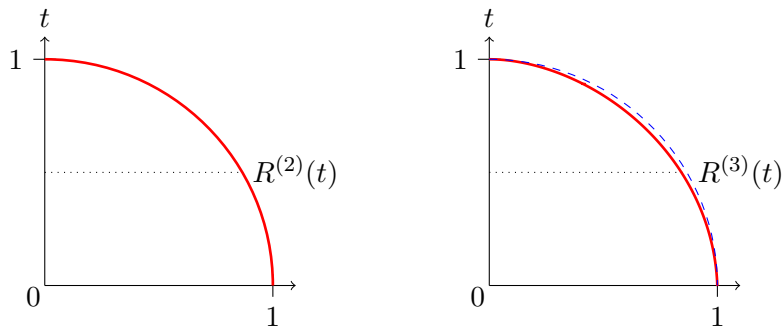
 
The limit energies which determine the asymptotic shape of the rescaled needles
can be solved explicitly (for $n=2$) or implicitly (for $n = 3$). An
  illustration of these solutions is shown in Fig. \ref{fig-min}.
\begin{theorem}[Solutions of limit problems] \label{thm-limit} %
  Let $n \in \{ 2, 3 \}$. The unique (up to translations) minimizers with bounded support
  $A^{(n)} \in \AA_0^{(n)}$ of $E_0^{(n)}$ are given by
  \begin{align}\label{thm-limit-def-A-R}
    A^{(n)}(x_1) \ = \ \ome_{n-1}  \big(R^{(n)}_* R^{(n)}(t)\big)^{n-1}\, , \qquad %
    \text{where \ $t \ := \ \frac{\hk{|x_1|}}{L_*^{(n)}}$}
  \end{align}
  and where $R_*^{(n)}$, $L_*^{(n)} > 0$ and $R^{(n)} : \R \to [0,1]$ are given as follows:
  \begin{enumerate}
  \item For $n = 2$, we have $R^{(2)}_* = (\frac 1{2\pi^2})^{\frac{1}{3}}$,
    $L_*^{(2)} = (\frac{2}{\pi})^{\frac 13}$ and
    \begin{align} \label{thm-limit-eq-2d} R^{(2)}(t) \ = \ \chi_{(-1,1)}(t) \
      \sqrt{1 - t^2}.
    \end{align}
  \item For $n=3$, \hk{$R^{(3)}$ is given by \eqref{lim-ELG-3D-impl-def-rho}  with parameters $R^{(3)}_*\approx 1.511$ and  $L^{(3)}_*\approx 0.202$ solving the system \eqref{lim-eq-abcd-2}-\eqref{lim-eq-abcd-4}.}
  \end{enumerate}
\end{theorem}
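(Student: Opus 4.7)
The plan is to characterize the minimizers via Euler--Lagrange analysis of the one-dimensional functional $E_0^{(n)}$, after a rearrangement reduction to even, single-peaked profiles whose support is a bounded interval.

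\textbf{Step 1 (Symmetric decreasing rearrangement).} In both dimensions the energy does not increase under symmetric decreasing rearrangement in $x_1$. For $n=3$: the integral $\int\sqrt{A}\,dx_1$ depends only on the distribution function of $A$, hence is invariant under equimeasurable rearrangement, and $\int (A')^2\,dx_1$ decreases by Polya--Szego. For $n=2$: the term $|\{A>0\}|$ is rearrangement invariant, and the fractional Sobolev seminorm $\|\cdot\|_{\dot H^{1/2}}^2$ is decreased by symmetric decreasing rearrangement (a standard fractional Polya--Szego result). Thus we may assume $A$ is even, non-increasing on $[0,\infty)$, with $\operatorname{supp}(A)=[-L,L]$, where $L>0$ is itself a variational parameter.

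\textbf{Step 2 (The case $n=2$).} At fixed $L$, the map $A \mapsto \tfrac12\|A\|_{\dot H^{1/2}}^2$ is a strictly convex quadratic on the affine subspace $\{A : \operatorname{supp}(A)\subset[-L,L],\ \int A = 1\}$, so admits a unique minimizer characterized by the Lagrange condition
\begin{align*}
(-\Delta)^{1/2} A \ = \ \lambda \quad\text{on } (-L,L), \qquad A\equiv 0 \text{ on } \R\setminus [-L,L].
\end{align*}
A direct computation of the finite Hilbert transform shows that $(-\Delta)^{1/2}\bigl[\sqrt{L^2-x^2}\,\chi_{[-L,L]}\bigr]$ is constant on $(-L,L)$, so the semicircular profile $A(x)=c\sqrt{L^2-x^2}\,\chi_{[-L,L]}(x)$ is this minimizer; non-negativity is automatic. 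The mass constraint fixes $c=2/(\pi L^2)$, and $E_0^{(2)}$ reduces to an elementary function of $L$ of the form $4L + a\,L^{-2}$, whose coefficient $a$ is computed via $\int_{-1}^1 \sqrt{1-x^2}\,e^{-ix\xi}\,dx = \pi J_1(\xi)/\xi$ together with the Weber--Schafheitlin identity $\int_0^\infty J_1(\xi)^2/\xi\,d\xi = 1/2$. One-variable minimization in $L$ yields $L_*^{(2)}$, and $R_*^{(2)}$ then follows from the mass normalization $\pi R_*^{(2)} L_*^{(2)} = 1$.

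\textbf{Step 3 (The case $n=3$).} Since $E_0^{(3)}$ is local, the Euler--Lagrange equation on $\{A>0\}$ is the autonomous second-order ODE
\begin{align*}
-\frac{1}{7\pi}\,A''(x_1) + \sqrt{\frac{\pi}{A(x_1)}} \ = \ \lambda,
\end{align*}
with $\lambda$ the mass-constraint multiplier. At the free endpoint $x_1=L$ the natural transversality condition gives $A(L)=A'(L)=0$. Multiplying by $A'$ and integrating produces the first integral
\begin{align*}
\frac{1}{14\pi}\,(A'(x_1))^2 \ = \ 2\sqrt{\pi\,A(x_1)} \, - \, \lambda\,A(x_1),
\end{align*}
the constant of integration vanishing by the boundary conditions. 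On the decreasing branch $x_1\in(0,L)$, separation of variables yields a quadrature that implicitly defines the rescaled profile $R^{(3)}$. The remaining scalars $R_*^{(3)},L_*^{(3)},\lambda$ are then determined by three algebraic conditions: the vanishing of $A$ at $x_1=L_*^{(3)}$, the normalization $\int A = 1$, and the height of the profile at $x_1=0$ encoded through $R_*^{(3)}$ in the definition of $A^{(3)}$.

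\textbf{Main obstacle.} In two dimensions, once the semicircular candidate is certified by the Hilbert transform identity, the remainder is one-variable calculus together with a Bessel integral. The genuinely nontrivial step is in three dimensions: one must show that the phase-plane orbit of the autonomous ODE starting from $A'(0)=0$ reaches $A=0$ in finite ``time'' with $A'=0$ there, which requires the radicand $2\sqrt{\pi A}-\lambda A$ to remain non-negative and to vanish exactly at the two relevant heights, and that the algebraic system for $(R_*^{(3)},L_*^{(3)},\lambda)$ admits a unique positive solution. This amounts to a monotonicity analysis of the implicit period and of the integrated mass as functions of $\lambda$; once uniqueness is obtained, the explicit implicit parametrization and the stated numerical values follow.
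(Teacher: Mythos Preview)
For $n=2$ your outline is essentially the paper's argument: both reduce to profiles with a fixed interval of positivity, identify the semicircle via the Euler--Lagrange condition (the paper writes it as $H(A')=\text{const}$, you as $(-\Delta)^{1/2}A=\text{const}$), and then optimize the interval length using the Bessel--function computation of $\|A\|_{\dot H^{1/2}}$. The only difference is cosmetic: you invoke a fractional P\'olya--Szeg\H{o} inequality, while the paper uses a monotonicity-under-shifting lemma to force connectedness of the support.

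For $n=3$ there is a genuine divergence, and it affects the statement you are asked to prove. You assert that the natural transversality at the free endpoint gives $A(L)=A'(L)=0$, so the integration constant in the first integral vanishes. The paper does \emph{not} argue this way: it keeps the integration constant $\alpha_0/\beta_0$ as an unknown and closes the system with an independent necessary condition, namely optimality under the volume-preserving rescaling $A_\varepsilon(x)=\varepsilon^{-1}A(x/\varepsilon)$. This is exactly what produces the implicit formula \eqref{lim-ELG-3D-impl-def-rho} and the system \eqref{lim-eq-abcd-2}--\eqref{lim-eq-abcd-4}; the paper's solution has $\alpha_0\approx 104$, which corresponds to $A'(L_*)\neq 0$. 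Your first integral with zero constant would yield a \emph{different} (simpler) quadrature and different numerical values of $R_*,L_*$, so your route does not land on the formulas in the theorem statement.

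The transversality step is also more delicate than you indicate: because of the obstacle constraint $A\ge 0$ and the fact that the lower-order term $\sqrt{A}$ is not $C^1$ at $A=0$, the textbook free-endpoint condition $[F-A'F_{A'}]|_{x=L}=0$ does not apply automatically. If you want to insist on $A'(L)=0$, you would need a direct energy-comparison argument (e.g.\ showing that a profile with $A'(L^-)\neq 0$ can be improved by smoothing the corner); otherwise, to match the paper you should replace transversality by the rescaling variation as the closing condition.
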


We remark that for $n = 2$, the minimizer of $E_0^{(2)}$ is exactly given by the
cross--section of an ellipse\hkk{; for $n = 3$ this is only approximately the
  case.  We believe that the optimal needle shape for the full model should be
  rotationally symmetric; however, to show this rigorously seems quite difficult
  and is outside of the scope of our analysis.  We have the following physical
  intuition for the limit model: } \hkk{With increasing volume the aspect ratio
  $R_\mu/L_\mu$ of the domain becomes smaller and the magnetostatic energy
  related to interaction between charges in horizontal direction becomes more
  important compared to interactions in other directions. In particular, the
  self--interaction of magnetic charges within a single layer
  $\{x_1\} \times \R^{n-1}$ for $x_1 \in \R$ becomes dominant in the limit. We
  note that $\p_1 A$ corresponds to the magnetic charge density per layer. A
  problem similar to the limit problem of Theorem \ref{thm-limit} \hkk{for
    $n = 3$} has been solved in \cite[Sec. 5]{KnuepferMuratov-2011}. Different
  than the problem considered in \cite{KnuepferMuratov-2011} we cannot assume an
  a priori bound on the support of our solution and we need to rule out the
  possibility of losing mass at infinity.  Also, our problem includes a volume
  constraint which is not present in \cite{KnuepferMuratov-2011}. }

\medskip

\textbf{Comparison to previous results.}
Nonlocal problems with an isoperimetric term and a competing nonlocal
interaction term have been studied extensively in recent years
\cite{Pegon2021,CandauTilh2021}, in particular \hkk{for} nuclear \hkk{stability}
\cite{Gamow1930}, elasticity and micromagnetics.  Coulomb-type potentials with
interfacial penalization and prescribed volume have been considered
(e.g.\cite{Knuepfer2012, Knuepfer2013, Lu2013, Julin2014, Bonacini2014,
  Bonacini2015, Frank2015,
  Alama2020}) since they naturally appear in physical models, for example in the
modeling of diblock-copolymers, see
\cite{Bahiana1990,Choksi2003,Leibler1980,Matsen1994,Ohta1986}. \hktwo{The phase
  transformation in a ferromagnetic slab under the influence of an external
  field has been investigated in \cite{KnuepferMuratov-2011} where also the
  nucleation of ferromagnetic domains \hkk{plays} an essential role, see also
  \cite{OttoViehmann-2009}.  Nucleation and related problems in shape memory
  alloys have been e.g. considered in
  \cite{KnuepferKohn-2010,ContiSchweizer-2006,KnuepferOtto-2019}. There are
  various other related models from materials science where dilute/large volume
  regimes are relevant, such as e.g. epitaxial growth
  (e.g. \cite{BellaGZ-2015}), dislocations (e.g. \cite{ContiGO-2015}) or
  superconductors (e.g. \cite{ChoksiCKO-2008,ContiGOS-2018}). }

\medskip

In many papers related to our setting, a perfect ellipsoid shape is assumed
\cite{Brown1968, Berkovsky1985, Brancher1987, Aharoni1988, Alouges2008,
  Fratta2016, Agarwal2011}.  This is a common simplification due to the fact
that the self-demagnetization of ellipsoids can be treated analytically.  From
this perspective, our 3D result which shows that asymptotic minimizers are not
perfect ellipsoids is of particular interest. We note however, that similar
structures have been observed numerically e.g.\ in
\cite{SeroGuillaume1992}. There, the authors study a sessile ferrofluid drop for
a specified range of surface tension, gravitational and magnetic forces. These
ranges are characterized by the magnetic and gravitational Bond numbers which
express the relative importance of gravitational and magnetic forces with
respect to surface tension.  Compared to our setting, gravitation (in the
vertical direction) is considered as an additional effect which favors flat
droplets.  The authors deduce numerically that if the gravitational Bond number
is below a certain threshold, then the height of the drop is increasing, seen as
function of the magnetic Bond number. If the magnetic Bond number is large
enough, an inversion of the curvature is observed, compare Figure \ref{fig-min}
and \cite[Fig. 4]{SeroGuillaume1992}. Other examples of optimal shapes for
ferromagnetic liquid droplets that are not ellipsoidal are given in
\cite{Banerjee2001}.  The energy considered there includes an exchange and core
energy in addition to surface and demagnetization energy in our paper.  The
authors constrain the set of possible shapes and textures by imposing a
cylindrical symmetry. They then use numerics to calculate the energy minimizers
in their restricted class of admissible shapes to be donut- or apple-shaped.

\medskip

The competition between a nonlocal repulsive potential and an attractive
confining term is found also in other problems, for example in models studying
the interaction of dislocations \cite{Meurs2021,Kimura2020} or
\cite{Mora2018,Carrillo2019,Carrillo2020}.  The latter considered a nonlocal
anisotropic potential energy with an isotropic and local confinement term, other
\cite{EleuteriLT-preprint2020} study problems with a nonlocal anisotropic
perimeter.  In our case, we combine a nonlocal anisotropic energy potential with
an anisotropic but local perimeter.  \hkk{An}other anisotropic and nonlocal
repulsive energy that has been treated variationally using ansatz--free analysis
is \cite{Carrillo2020} (based on \cite{Mora2018,Carrillo2019}).  The model
studied in \cite{Mora2018} describes the interaction of positive dislocations in
the plane. The energy and results have subsequently been generalized to
arbitrary dimensions in \cite{Carrillo2020} and consist of an isotropic
quadratic confinement and a\hkk{n} anisotropic, nonlocal and repulsive term with
a fixed mass constraint. The energy density of the latter expression
$W_\alpha(x)$ is associated with the isotropic Coulomb potential together with
an anisotropic part of the form $\alpha\frac{x_1^2}{|x|^n}$.  The authors prove,
that their energy admits a unique minimizer which is calculated to be an
ellipsoid with half axis depending on $\alpha$.  Their approach relies on the
strict convexity of the energy for $\alpha\in (-1,n-2]$ and the Euler-Lagrange
conditions. Borrowing and extending techniques from complex analysis, it is
shown that the optimality conditions are satisfied by ellipsoids. Although similar in
its statements, this result exhibits some major differences compared to our
work: Firstly, we note that our \hkk{(rescaled)} model incorporates anisotropy
in both parts of the energy. In particular, our perimeter bound degenerates for
$\eps\rightarrow 0$, while the attractive potential in \cite{Carrillo2020} is
purely isotropic.  Secondly, we point out that our repulsive energy potential is
"more degenerate" in the following sense: Writing $\NN_1$ in $x-$coordinates
(see Lemma 2.1 in \cite{Knupfer2017}) we find the convolution kernel to be
$\partial_1^2 \Gam_n(x) \sim \frac{-1}{|x|^n} + n\frac{x_1^2}{|x|^{n+2}}$, where
$\Gam_n$ is the usual Coulomb kernel in $n$ dimensions. Since the relevant
energy estimates take place close to $0$, the kernel $\partial_1^2 \Gam_n(x)$ is
more singular compared to $W_\alpha(x)$.  Thirdly, we want to highlight the
non-convexity of our problem. As we pointed out earlier, a crucial step in
\cite{Carrillo2020} is to use strict convexity for the existence of minimizers
and the conclusion that a solution to the Euler-Lagrange conditions is indeed a
global minimizer. Our energy is not accessible through those methods.  At last
we also note that our result is not only concerned with minimizers of
\eqref{intro-energy} but also with sequences bounded in energy. In particular,
we are able to give highly nontrivial information about the convergence of those
configurations for $\eps\rightarrow 0$.

\medskip

\textbf{Assumption of bounded support.} For both Theorem \dstwo{
  \ref{thm-compactness} } and Theorem \ref{thm-gamma}, we assume that
$\spt u_\hk{\hkk \eps} \SUS B_\rho$, uniformly for some fixed $B_\rho$. The
assumption of bounded support in Theorem \ref{thm-compactness} is used in the
proof of Lemma \ref{lem-N-low} in order to assure the boundedness of the second
moment of $u_\eps$ (in $x'-$direction) and in the proof of Theorem
\ref{thm-compactness} for the conclusion that no mass is lost in the limit
process (in direction of large $|x_1|$). For Theorem \ref{thm-compactness}, the
assumption seems to be necessary: Indeed, consider two disjoint needles of mass
$\frac{1}{2}$, with optimal cross--section and with increasing distance in
lateral direction as $\eps \to 0$. We then have the uniform bound
$E[u_\eps] \lesssim 1$, however, we do not have compactness of the sequence
$u_\eps$ in any $L^p$--space.  However, in the case of a sequence of minimizers
it might be possible to alleviate this assumption.


\section{Proofs} \label{sec-proofs}

\subsection{Preliminaries} \label{sus-prelim}

The main idea behind the compactness and the liminf
estimate in Theorem \ref{thm-gamma} is to replace the rescaled characteristic function by the cross--sectional area $A(x_1) := \A[u](x_1)$ of the domain (cf. \eqref{def-cross})  
The Fourier transform of the cross--section also appears as the first constant term of the Taylor expansion of $\widehat{u_\eps}$ in transversal direction since
\begin{align}
\widehat u(\xi_1,0) \ = \ \frac 1{(2\pi)^{\frac {n-1}2}} \widehat A(\xi_1). 
\end{align}
For the proof, we hence take into account only the low frequencies of our
configuration, i.e.\ we expand $\widehat{u}$ around $\xi'=0$ as
\begin{align} \label{u-deco} %
  \widehat{u}(\xi_1,\xi') \ %
  = \ \frac 1{(2\pi)^{\frac {n-1}2}} \widehat A(\xi) +
  \xi'\cdot\nabla_{\xi'}\widehat{u}(\xi_1,0) + \frac{1}{2}\xi'\cdot D^2_{\xi'}\widehat{u}(\xi_1,\tilde{\xi'})\xi'
\end{align}
for some $\t \xi' \in (0,\xi')$ and then perform a cut-off in transversal
direction in the Fourier space.
A corresponding lower bound for the energy is given in the next lemma:
\begin{lemma}[Lower bound on nonlocal energy] \label{lem-N-low} %
  Let $n \geq 2$ and $u\in\AA$ and let $A := \A[u]$. Let
  $\spt u \subset\R \times B_\rho'(0)$ for some $\rho>0$.  Then, for all $\eta>0$
  \begin{align} \label{cond-eta} %
    N_\eps^{(n)}[u] \ \geq \ (1- {\rho^2 \eta^2}  ) N_{\eps,\eta,1}[u] + N_{\eps,\eta,2}[u]\, ,
  \end{align}
  where
  \begin{align}
    N_{\eps,\eta,1}[u] \ &= \ \frac{\gam_n(\eps)}{(2\pi)^{n-1}} \int_{\Rn} \frac{\xi_1^2}{\eps^2\xi_1^2 + |\xi'|^2} \big|\hat A(\xi_1) \chi_{D'_{\eta}(\xi_1)}(\xi')\big|^2   \ d\xi, \label{def-N1}\\ \quad %
    N_{\eps,\eta,2}[u] \ &= \ \gam_n(\eps) \int_{\Rn} \frac{\xi_1^2}{\eps^2\xi_1^2 + |\xi'|^2} \Big|\hat u(\xi) - \frac 1{(2\pi)^{\frac{n-1}{2}}}\hat A(\xi_1)  \chi_{D'_{ \eta}(\xi_1)}(\xi')\Big|^2 \ d\xi \label{def-N2}
  \end{align}
  and where
  \begin{align}
    D'_{\eta}(\xi_1) \ %
    := \ \Big \{ \xi'\in\R^{n-1}\sd |\xi'|\leq \eta
    |\widehat{A}(\xi_1)|^{\frac 12} \Big \}.
  \end{align}
\end{lemma}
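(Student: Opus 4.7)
The plan is to decompose the Fourier transform as $\widehat u = F + G$ where $F(\xi_1,\xi') := (2\pi)^{-(n-1)/2}\widehat A(\xi_1)\chi_{D'_\eta(\xi_1)}(\xi')$ and $G := \widehat u - F$. Expanding $|\widehat u|^2 = |F|^2 + |G|^2 + 2\Re(F\bar G)$ and integrating against $\gam_n(\eps)K(\xi)$ with $K(\xi) := \xi_1^2/(\eps^2\xi_1^2 + |\xi'|^2)$, the first and third terms are exactly $N_{\eps,\eta,1}[u]$ and $N_{\eps,\eta,2}[u]$. The claimed inequality \eqref{cond-eta} is thus equivalent to the cross-term estimate
$$ 2\gam_n(\eps)\int_{\R^n} K\,\Re(F\bar G)\,d\xi \ \geq \ -\rho^2\eta^2\,N_{\eps,\eta,1}[u]. $$
Since $F$ is supported on $D'_\eta$, where it reduces to $\phi(\xi_1) := (2\pi)^{-(n-1)/2}\widehat A(\xi_1)$ (independent of $\xi'$) and $G = h := \widehat u - \phi$, the cross-term equals $2\gam_n(\eps)\int_\R \Re\bigl(\phi(\xi_1)\int_{D'_\eta(\xi_1)}K(\xi')\overline{h(\xi_1,\xi')}\,d\xi'\bigr)\,d\xi_1$ after pulling the $\xi_1$-dependent factor $\phi$ out of the inner integral.

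The main idea is to apply the Taylor expansion \eqref{u-deco} to $h$ and exploit the symmetry of $K$ and $D'_\eta$ under $\xi'\mapsto -\xi'$. The linear term $\xi'\cdot\nabla_{\xi'}\widehat u(\xi_1,0)$ is odd in $\xi'$, so it integrates to zero against the even kernel $K(\xi')$ over the symmetric disk $D'_\eta(\xi_1)$; only the quadratic remainder contributes. Differentiating the Fourier integral for $\widehat u$ twice under the integral sign and using $\spt u\subset\R\times B'_\rho(0)$ together with $\|u\|_{L^1}=1$ yields the pointwise bound $|\xi'\cdot D^2_{\xi'}\widehat u(\xi_1,\tilde\xi')\cdot\xi'| \leq (2\pi)^{-n/2}\rho^2|\xi'|^2$ for any $\tilde\xi'$, since $|x'|\leq\rho$ on the support of $u$.

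Combined with $|\xi'|^2 \leq \eta^2|\widehat A(\xi_1)|$ on $D'_\eta(\xi_1)$, this produces $\bigl|\int_{D'_\eta(\xi_1)} K\,\bar h\,d\xi'\bigr| \leq \tfrac{1}{2}(2\pi)^{-n/2}\rho^2\eta^2|\widehat A(\xi_1)|\int_{D'_\eta(\xi_1)} K\,d\xi'$. Multiplying by $|\phi(\xi_1)| = (2\pi)^{-(n-1)/2}|\widehat A(\xi_1)|$ and applying the trivial inequality $(2\pi)^{-(2n-1)/2}\leq (2\pi)^{-(n-1)}$ (equivalent to $(2\pi)^{1/2}\geq 1$) delivers the pointwise bound $\bigl|\phi(\xi_1)\int_{D'_\eta(\xi_1)}K\,\bar h\,d\xi'\bigr| \leq \tfrac{\rho^2\eta^2}{2}|\phi(\xi_1)|^2\int_{D'_\eta(\xi_1)}K\,d\xi'$; integrating in $\xi_1$ then gives the cross-term inequality, hence \eqref{cond-eta}. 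I expect the main (though routine) obstacle to be bookkeeping with the Fourier $2\pi$-prefactors across the various conventions; the only genuinely new ingredient is the parity argument annihilating the linear Taylor term, which is exactly what allows the error to scale like $|\xi'|^2\lesssim \eta^2|\widehat A|$ rather than $|\xi'|\lesssim\eta|\widehat A|^{1/2}$.
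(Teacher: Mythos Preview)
Your proof is correct and follows essentially the same approach as the paper: decompose $|\widehat u|^2$ into $|F|^2+|G|^2+2\Re(F\bar G)$, identify the first two pieces as $N_{\eps,\eta,1}$ and $N_{\eps,\eta,2}$, and bound the cross term by exploiting that the first-order (odd) part of $\widehat u(\xi_1,\xi')-\widehat u(\xi_1,0)$ integrates to zero against the even weight over the symmetric set $D'_\eta(\xi_1)$, while the quadratic remainder is controlled via $|x'|\leq\rho$ and $\|u\|_{L^1}=1$. The only cosmetic difference is that the paper writes the remainder explicitly as $\cos(\xi'\cdot x')+i\sin(\xi'\cdot x')-1$ and averages over spheres $|\xi'|=r$ before integrating radially, whereas you invoke the Taylor form \eqref{u-deco} and integrate directly over the ball; both yield the same bound $2I_\eps\leq \rho^2\eta^2(2\pi)^{-1/2}N_{\eps,\eta,1}\leq \rho^2\eta^2 N_{\eps,\eta,1}$.
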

\begin{proof}
With the identity
  \begin{align}
    |\hat u|^2 \ %
    &= \ |\widehat u(\xi_1,0)|^2 \chi_{D'_{ \eta}(\xi_1)}(\xi') + |\hat u -
      \widehat u(\xi_1,0) \chi_{D'_{\eta}(\xi_1)}(\xi')|^2 \notag\\
    &\qquad  + 2 \Re \Big[(\hat u - \widehat u(\xi_1,0)) \widehat u(\xi_1,0) \chi_{D'_{ \eta}(\xi_1)}(\xi') \Big],
  \end{align}
  and since $\widehat u(\xi_1,0) = (2\pi)^{-\frac{n-1}2} \widehat A(\xi_1)$, we
  have
  \begin{align}
    N_\eps^{(n)}[u] \ %
    &\geq \ N_{\eps,\eta,1}[u] + N_{\eps,\eta,2}[u] - 2 I_\eps[u],    
  \end{align}
  where
  \begin{align*}
    I_\eps[u] \ %
    &:= \ \frac{\gam_n(\eps)}{(2\pi)^{\frac{n-1}{2}}} \Big|\int_\R \int_{D'_{ \eta}(\xi_1)} \frac{\xi_1^2}{(\eps^2\xi_1^2 + |\xi'|^2)} \Re \Big(\hat u(\xi) - (2\pi)^{-\frac{n-1}{2}}\hat A(\xi_1) \Big) \hat A(\xi_1)  \dx\xi' \dx\xi_1\dsone{\Big|}.  
  \end{align*}
  Using spherical coordinates $\xi' = (r,\ome)$ in transversal direction, we
  estimate
  \begin{align}
    I_\eps[u] \ %
      &\leq  \  \frac{\gam_n(\eps)}{(2\pi)^{\frac{n-1}{2}}}\int_\R \int_0^{\eta |\widehat{A}(\xi_1)|^{\frac 12}}   \frac{\xi_1^2 |\hat A(\xi_1)|}{\eps^2\xi_1^2 + r^2} \dsone{\Big|}\dashint_{|\xi'| = r}  \Big(\hat u(\xi) - \frac{\hat A(\xi_1)}{(2\pi)^{\frac{n-1}{2}}}\Big)   \dx\ome \Big| r^{n-2} \dx r \dx\xi_1.  \notag %
  \end{align}
  We calculate
  \begin{align} \label{theid} %
    \hat u(\xi) - \frac {\hat A(\xi_1)}{(2\pi)^{\frac{n-1}{2}}} \ %
    &= \ \frac 1{(2\pi)^{\frac n2}}\int_{\Rn} u(x) e^{-i\xi_1 x_1}
      [\cos(\xi' \cdot x') + i \sin(\xi' \cdot x') - 1 ] \dx x' \dx x_1.
  \end{align}
  The integral over the sine function vanishes by symmetry upon integration
    in $\xi'$ over $|\xi'| = r$. Expanding the cosine function and since
  $\spt u \subset \R\times B_\rho'(0)$ we hence get
  \begin{align} \label{to-ins} %
    \Big|\dashint_{|\xi'| = r} \Big(\hat u(\xi) - \frac{\hat
    A(\xi_1)}{(2\pi)^{\frac{n-1}{2}}}\Big) \dx\ome \Big| \ %
    &\lupref{theid}\leq \ \frac {r^2}{2(2\pi)^{\frac n2}} \int_{\R \times B_\rho'(0)} |x'|^2 |u(x)| \dx x \ %
      \leq \ \frac {r^2 \rho^2}{2(2\pi)^{\frac n2}}.
  \end{align}
  Inserting \eqref{to-ins} into \hk{the expression for $I$} and since
  $r \leq \eta|\widehat{A}(\xi_1)|^{\frac 12}$ we obtain
  \begin{align}
    2I_\eps[u] \ %
    &\leq  \  \frac{2 \gam_n(\eps)}{(2\pi)^{n - \frac 12}}\int_\R \int_0^{\eta |\widehat{A}(\xi_1)|^{\frac 12}}  \frac{\xi_1^2}{(\eps^2\xi_1^2 + r^2)} |\hat A(\xi_1)| \Big| \frac {r^2 \rho^2}{2} \ r^{n-2} \dx r \dx\xi_1  \notag %
    \leq \ \frac {\eta^2 \rho^2 }{\sqrt{2\pi}}  N_{\eps,\eta,1}[u_\eps].
  \end{align}
\end{proof}

Since $\widehat A$ does not depend on $\xi'$, in order to
  calculate the integral for $N_{\eps,\eta,1}[u]$, it is useful to first integrate in
  $\xi'$ and then in $\xi_1$. For this, the following integral formula is  useful:
  
\begin{lemma}[Integral formula] \label{lem-intform} %
  Let $\beta\geq 0$, $k\in\mathbb{N}$ and let $\xi' \in \R^{n-1}$. Then
  \begin{align} 
    \int_{|\xi'|\leq \beta} &\frac{\xi_1^2 |\xi'|^{2k}}{\eps^2\xi_1^2 + |\xi'|^2} \dx\xi' \ %
                              = \ \frac{\ome_{n-2}}{n-1+2k} \ \eps^{-2} 
        \beta^{n-1+2k} 
        \f\left(1,\tfrac{n-1+2k}{2},\tfrac{n+1+2k}{2},-\left(\tfrac{\beta}{\eps|\xi_1|}\right)^2\right)\, , \nonumber
\end{align}
where $\f$ is the hypergeometric function.
In particular for $k=0$
\begin{align} \label{calc_inner_int} %
\int_{|\xi'|\leq \beta}
\frac{\xi_1^2}{\eps^2\xi_1^2 + |\xi'|^2} \dx\xi' %
\ = \
\begin{TC}
\frac{2}{\eps} |\xi_1|
\arctan\Big(\frac{\beta}{\eps
|\xi_1|}\Big) &\text{for $n=2$,} \vspace{0.6ex}\\ %
\pi\xi_1^2 \ln\Big(1+ \Big( \frac{\beta}{\eps|\xi_1|} \Big)^2 \Big) &\text{for $n=3$}.
\end{TC}
\end{align}
Furthermore, if $n+2k\geq 4$ and $\frac{\beta}{\eps|\xi_1|}\rightarrow\infty$ we have the asymptotic expansion
\begin{align} \label{calc_inner_int_expansion}
\int_{|\xi'|\leq \beta} \frac{\xi_1^2}{\eps^2\xi_1^2 + |\xi'|^2} |\xi'|^{2k} \dx \xi'
\ &= \ \frac{\ome_{n-2}}{n-3+2k} \xi_1^2 \beta^{n-3+2k} + O\left(\eps\xi_1^3\beta^{n-4+2k}\right)\, .
\end{align}
\end{lemma}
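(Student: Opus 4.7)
\textbf{Proof plan for Lemma \ref{lem-intform}.} My plan is to reduce to a one-dimensional radial integral and then identify it with the Euler integral representation of $\,_2F_1$; the two explicit cases and the asymptotic expansion will then follow from elementary manipulations.

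First I would exploit that the integrand depends on $\xi'$ only through $|\xi'|$. Introducing spherical coordinates $\xi' = r\omega$ with $\omega \in S^{n-2}$ and $r \in [0,\beta]$, the angular integration yields the factor $\omega_{n-2}$ and reduces the problem to
\begin{align*}
  \int_{|\xi'|\leq \beta} \frac{\xi_1^2 |\xi'|^{2k}}{\eps^2\xi_1^2 + |\xi'|^2} \dx\xi'
  \ = \ \omega_{n-2}\, \xi_1^2 \int_0^\beta \frac{r^{n-2+2k}}{\eps^2\xi_1^2 + r^2} \dx r\, .
\end{align*}

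Next, to obtain the hypergeometric form, I would perform the substitution $u = (r/\beta)^2$, which turns the remaining integral into
\begin{align*}
   \frac{\beta^{n-1+2k}}{2\eps^2} \int_0^1 u^{(n-3+2k)/2} \bigl(1 - z u\bigr)^{-1}\dx u\qquad\text{with}\qquad z = -\bigl(\tfrac{\beta}{\eps|\xi_1|}\bigr)^2.
\end{align*}
This integral is precisely the Euler integral representation of $\,_2F_1(1,b,b+1,z)$ with $b = (n-1+2k)/2$, which, together with $\Gamma(b+1)/(\Gamma(b)\Gamma(1)) = b$, produces the prefactor $1/(n-1+2k)$ and yields the asserted closed form. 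For the special cases $n=2$ and $n=3$ (both with $k=0$), rather than invoking identities for $\,_2F_1$, I would just integrate directly using the antiderivatives $\int dr/(\eps^2\xi_1^2+r^2) = (\eps|\xi_1|)^{-1}\arctan(r/(\eps|\xi_1|))$ and $\int r \dx r/(\eps^2\xi_1^2+r^2) = \tfrac12\ln(\eps^2\xi_1^2+r^2)$, which immediately give the stated arctan and log formulas.

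For the asymptotic expansion when $n+2k \geq 4$, I would use the algebraic identity
\begin{align*}
  \frac{r^{n-2+2k}}{\eps^2\xi_1^2+r^2} \ = \ r^{n-4+2k} \ - \ \frac{\eps^2\xi_1^2\, r^{n-4+2k}}{\eps^2\xi_1^2+r^2}
\end{align*}
to split the 1D integral into an explicit main term $\xi_1^2\beta^{n-3+2k}/(n-3+2k)$ and a remainder. The main place where one must be careful is the bound on this remainder: my plan is to use the AM--GM inequality $\eps^2\xi_1^2 + r^2 \geq 2\eps|\xi_1|\, r$, which controls the remainder by $\tfrac12 \eps |\xi_1|^3 \int_0^\beta r^{n-5+2k}\dx r = O(\eps|\xi_1|^3\beta^{n-4+2k})$, as required. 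This is essentially the only non-routine step: one needs the AM--GM bound rather than a crude split at $r = \eps|\xi_1|$ to obtain the correct sharp factor $\eps\xi_1^3\beta^{n-4+2k}$ (instead of a logarithmically worse estimate), and the integrability at $r = 0$ is exactly guaranteed by the assumption $n+2k \geq 4$.
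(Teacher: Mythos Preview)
Your approach is correct in spirit and more self-contained than the paper's: the paper reaches the hypergeometric identity via the substitution $s=r/(\eps|\xi_1|)$ and a DLMF reference, obtains the $n=2,3$ cases from tabulated special values, and derives the asymptotic expansion by quoting a large-argument expansion of ${}_2F_1$ from Prudnikov. Your route---the Euler integral via $u=(r/\beta)^2$, direct antiderivatives for $n=2,3$, and the algebraic splitting for the asymptotics---avoids all external references and is arguably cleaner.

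There is, however, a genuine gap in your treatment of the remainder in the borderline case $n+2k=4$. Your AM--GM bound $\eps^2\xi_1^2+r^2\ge 2\eps|\xi_1|r$ leads to $\int_0^\beta r^{n-5+2k}\,dr$, and for $n+2k=4$ this is $\int_0^\beta r^{-1}\,dr$, which diverges; so your claim that ``integrability at $r=0$ is exactly guaranteed by the assumption $n+2k\ge 4$'' is off by one. The AM--GM argument works only for $n+2k\ge 5$. The fix is easy: when $n+2k=4$ the remainder integral is $\eps^2\xi_1^2\int_0^\beta (\eps^2\xi_1^2+r^2)^{-1}\,dr=\eps|\xi_1|\arctan(\beta/(\eps|\xi_1|))\le \tfrac{\pi}{2}\eps|\xi_1|$, which is exactly the claimed $O(\eps|\xi_1|^3\beta^{0})$ after multiplying by $\omega_{n-2}\xi_1^2$. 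With this one-line correction your argument is complete.
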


\begin{proof}
  We write $N :=n-2+2k$. With the the change of variables
 $s = \frac{|\xi'|}{\eps|\xi_1|}$, we get
\begin{align*}
\int_{|\xi'|\leq \beta} \frac{\xi_1^2}{\eps^2\xi_1^2 + |\xi'|^2} |\xi'|^{2k} \dx \xi' 
\ &= \ \ome_{n-2} \eps^{N-1} \xi_1^{N+1} \ \int_{0}^{\beta/(\eps|\xi_1|)} \frac{s^{N}}{1 + s^2}  \dx s\, .
\end{align*}
The last integral can be expressed as a hypergeometric function \cite[15.6.1]{DLMF2021} 
\begin{align*}
  \int_0^{\frac{\beta}{\eps|\xi_1|}} \frac{s^N}{1 + s^2}  \dx s 
  \ = \ \Big(\frac{\beta}{\eps|\xi_1|}\Big)^{N+1} \frac 1{N+1} \ \f\Big(1,\frac{N+1}{2},\frac{N+3}{2},-\left(\frac{\beta}{\eps|\xi_1|}\right)^2\Big)\, ,
\end{align*}
which yields \hk{the assertion}. Then \eqref{calc_inner_int} directly follows
by \cite[Sec. 7.3.2 (148),p.476]{Prudnikov1990} for $n=3$ and by
\cite[Sec. 7.3.2 (83),p.473]{Prudnikov1990} for $n=2$ since
$\f(1,\frac12,\frac32,z)=\f(\frac12,1,\frac32,z)$ (see \cite[Sec. 7.3.1 (4),
p.454]{Prudnikov1990}).

Now assume that $N\geq 2$. With the help of \cite[Sec. 7.3.1 (6)]{Prudnikov1990}
we also get the expansion
\begin{align*}
\f\left(1,\tfrac{N+1}{2},\tfrac{N+3}{2},-s^2\right) 
\ &= \ \frac{(N+1)\Gam(\hk{\frac 12 (N-1)})}{2\Gam(\hk{\frac 12 (N+1)})} \frac{1}{s^2} + O\Big(\frac{1}{s^3}\Big)\, .
\end{align*}
Together with $\Gam((N-1)/2)/\Gam((N+1)/2)=\frac{2}{N-1}$
\eqref{calc_inner_int_expansion} then follows.
\end{proof}

\subsection{Proof of Theorem \ref{thm-compactness}}
\label{subsec-cptness}

We consider a sequence of functions $u_\eps \in \AA$ with
$\supp u_\eps \subset B_\rho(0)$ (for some fixed $\rho>0$) satisfying the
uniform bound $\dstwo{E^{(n)}_\eps}[u_\eps] \leq C$. The compactness result is
based on the decomposition in Lemma \ref{lem-N-low}. This decomposition gives a
lower bound on the nonlocal energy in terms of the cross--section \dstwo{area}
$A_\eps := \A[u_\eps]$, i.e.
  \begin{align}
    N_\eps^{(n)}[u_\eps] \ \geq \ (1- \rho^2 \eta^2) \frac{\gam_n(\eps)}{(2\pi)^{n-1}} \int_{|\xi'|\leq \eta |\widehat{A}(\xi_1)|^{\frac 12}} \frac{\xi_1^2}{\eps^2\xi_1^2 + |\xi'|^2} \big|\hat A_\eps(\xi_1) \chi_{D'_{\eta}(\xi_1)}(\xi')\big|^2   \dx\xi
  \end{align}
  \hk{noting that the prefactor is positive for $\eta$ sufficiently small.} At
  the core of the proof is the next lemma which allows us to get uniform bounds
  on Sobolev norms for $A_\eps$:
  \begin{lemma}[Bound on cross--section $A$] \label{lem-com1} %
    Let $n=2,3$. Let $u\in \AA$ with $\spt u \subset \R\times B_\rho'(0)$ for
    some $\rho>2$ and let $A := \A[u]$. Then we have the bounds
  \begin{enumerate}
  \item\label{it-bound-hs}  
    For any $s \in [0,\frac{1}{14}]$ if $n = 2$ and $s \in [0,\frac 1{10})$ if $n = 3$, we have
    \begin{align} \label{eq-bound-hs} %
      \int_\R |\xi_1|^{2s} |\widehat A(\xi_1)|^2 \dx\xi_1 \ %
      \lesssim \ P_\eps^{(n)}[u]^2 + N_\eps^{(n)}[u] + \hk{C_{\rho,s}} ,
    \end{align}
  \item\label{it-bound-h12} %
    \hktwo{Let $\eta \in (0,\frac{1}{2\rho})$}.  Then we have
    \begin{align} \label{eq-bound-Seps-hs} %
      \int_{{S_{\eps,\eta}}} |\xi_1|^{n-1} |\widehat{{A}}(\xi_1)|^2 \ d\xi_1 \
    \lesssim \ N_\eps^{(n)}[u] \quad 
    \end{align}
  for
  \begin{align} \label{def-Seps} %
    {S_{\eps,\eta}} \ := \ \Big\{ \xi_1\in\R \ : \ \hk{q_{\eps,\eta}(\xi_1)}
    \geq \eps^{\frac{\eta}{2}-1} \Big\} \qquad %
    \text{where
    ${q_{\eps,\eta}}(\xi_1) \ := \ \frac{\eta|\widehat{{A}}(\xi_1)|^{\frac
    12}}{\eps|\xi_1|}$.}
  \end{align}
  \end{enumerate}
\end{lemma}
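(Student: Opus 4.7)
My plan is to combine Lemma~\ref{lem-N-low} (decomposition of $N_\eps^{(n)}$) with Lemma~\ref{lem-intform} (integral formula) to control $|\widehat A(\xi_1)|^2$ in different regions of Fourier space. I would prove part (ii) first and then use it as input for part (i).

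\textbf{Part (ii).} Since $\eta<1/(2\rho)$ we have $(1-\rho^2\eta^2)\geq 3/4$, so by Lemma~\ref{lem-N-low}, $N_\eps^{(n)}[u]\geq \tfrac34 N_{\eps,\eta,1}[u]$. The inner $\xi'$-integral in $N_{\eps,\eta,1}$ is evaluated via Lemma~\ref{lem-intform} with $k=0$ and $\beta=\eta|\widehat A(\xi_1)|^{1/2}$, producing an integrand that depends on $q_{\eps,\eta}$. On $S_{\eps,\eta}$, $q_{\eps,\eta}\geq \eps^{\eta/2-1}\geq 1$. For $n=2$, $\arctan(q_{\eps,\eta})\geq \pi/4$, so the inner integral is $\gtrsim |\xi_1|/\eps$, and multiplication by $\gam_2(\eps)=\eps$ yields the weight $|\xi_1|$. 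For $n=3$, $\ln(1+q_{\eps,\eta}^2)\geq (2-\eta)|\ln\eps|$ on $S_{\eps,\eta}$, so the inner integral is $\gtrsim |\ln\eps|\,\xi_1^2$, and multiplication by $\gam_3(\eps)\sim 1/|\ln\eps|$ yields the weight $\xi_1^2$. Restricting the outer $\xi_1$-integration to $S_{\eps,\eta}$ then gives \eqref{eq-bound-Seps-hs}.

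\textbf{Part (i).} I split $\R$ as $\{|\xi_1|\leq 1\}\cup(S_{\eps,\eta}\cap\{|\xi_1|>1\})\cup(S_{\eps,\eta}^c\cap\{|\xi_1|>1\})$. On $\{|\xi_1|\leq 1\}$, $\int|\widehat A|^2\leq \|A\|_{L^2}^2\leq \|A\|_\infty\|A\|_{L^1}\leq C\rho^{n-1}$, which is absorbed into $C_{\rho,s}$. On $S_{\eps,\eta}\cap\{|\xi_1|>1\}$, since $2s\leq n-1$ we have $|\xi_1|^{2s}\leq |\xi_1|^{n-1}$, so part (ii) gives $\lesssim N_\eps^{(n)}[u]$. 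The delicate piece is $S_{\eps,\eta}^c\cap\{|\xi_1|>1\}$, where $|\widehat A|^2\leq \xi_1^4\eps^{2\eta}/\eta^4$ by definition of $S_{\eps,\eta}^c$; this alone is useless for large $|\xi_1|$, so I combine it with two further bounds: first, the $\mathrm{BV}$ bound $|\widehat A(\xi_1)|\leq P_\eps^{(n)}[u]/(\eps|\xi_1|)$ which follows from $\|A'\|_M\leq \|\partial_1 u\|_M\leq P_\eps^{(n)}[u]/\eps$; second, a refined $L^p$-bound on the subregion $\{q_{\eps,\eta}\leq 1\}\subset S_{\eps,\eta}^c$ obtained by inserting the small-$q$ expansions $\arctan(q)\gtrsim q$ and $\ln(1+q^2)\gtrsim q^2$ into the inner integral of $N_{\eps,\eta,1}$, which yields
\begin{align*}
\int_{\{q_{\eps,\eta}\leq 1\}}|\widehat A|^{5/2}\,d\xi_1\lesssim \tfrac{\eps}{\eta}N_\eps^{(2)}[u],\qquad \int_{\{q_{\eps,\eta}\leq 1\}}|\widehat A|^3\,d\xi_1\lesssim \tfrac{\eps^2|\ln\eps|}{\eta^2}N_\eps^{(3)}[u].
\end{align*}
I then apply H\"older's inequality to the decomposition $|\widehat A|^2=(|\widehat A|^p)^a\cdot(|\widehat A|)^{2-pa}$ with $p=5/2$ for $n=2$ or $p=3$ for $n=3$, using the refined $L^p$-bound for the first factor and the $\mathrm{BV}$ bound for the second. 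The H\"older exponent $a$ is chosen so that the $\eps$-factors arising from these two bounds cancel exactly: this pins down $a=4/7$ as critical for $n=2$ (giving $s=1/14$) and an analogous balance for $n=3$ (giving $s<1/10$). The resulting estimate has the form $C(N_\eps^{(n)}[u])^{\theta}(P_\eps^{(n)}[u])^{2-2\theta}$ for some $\theta\in(0,1)$, which by Young's inequality is bounded by $CN_\eps^{(n)}[u]+C(P_\eps^{(n)}[u])^2+C$.

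\textbf{Main obstacle.} The hardest step will be the H\"older interpolation on $S_{\eps,\eta}^c$: the $1/\eps$ factor from the $\mathrm{BV}$ bound must be cancelled exactly by the $\eps$ factor from the refined $N_\eps$-bound, which occurs only at a single critical value of the H\"older exponent and thereby determines the sharp threshold on $s$. The borderline case $n=2$, $s=1/14$ should be admissible via a careful limit argument in the H\"older exponent, whereas for $n=3$ the extra $|\ln\eps|$ factor causes the bound to diverge at the boundary, forcing the strict inequality $s<1/10$.
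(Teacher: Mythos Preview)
Your Part~(ii) is correct and essentially identical to the paper's argument. Part~(i), however, has two genuine gaps.

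\emph{The endpoint $s=\tfrac{1}{14}$ for $n=2$.} At your critical value $a=4/7$ the $\eps$-powers do cancel, but the second H\"older factor involves $\int_E |\xi_1|^{(2s-2+pa)/(1-a)}\,d\xi_1$, which at $s=1/14$ is $\int_E |\xi_1|^{-1}\,d\xi_1$. Since the BV bound forces $q_{\eps,\eta}(\xi_1)\to 0$ as $|\xi_1|\to\infty$, the set $E$ contains an unbounded tail and this integral diverges. Taking $a>4/7$ does not help: the integrability constraint $2s<1-\tfrac32 a$ still forces $s<1/14$ strictly, and no limit argument can recover the endpoint. The paper avoids this by splitting the \emph{domain} rather than the integrand: it cuts $R_{\eps,\eta}^c=\{q<1\}$ at $|\xi_1|=\eps^{-\alpha}$, applies H\"older in the form $|\widehat A|^2=(|\widehat A|^{5/2})^{4/5}\cdot 1^{1/5}$ against the \emph{finite} length $|I_0|^{1/5}\lesssim\eps^{-\alpha/5}$ on the bounded piece $I_0$, and uses the BV bound alone on the tail $I_N$, where $\int_{\eps^{-\alpha_N}}^\infty |\xi_1|^{2s-2}\,d\xi_1$ converges. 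Matching $\alpha_1=\tfrac{4}{1+10s}$ with $\alpha_N=\tfrac{2}{1-2s}$ yields exactly $s\leq 1/14$, endpoint included.

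\emph{The threshold and the case $n=3$.} Your region $S_{\eps,\eta}^c=\{q<\eps^{\eta/2-1}\}$ contains the intermediate layer $\{1<q<\eps^{\eta/2-1}\}$, on which your $L^3$ bound (derived from $\ln(1+q^2)\gtrsim q^2$, valid only for $q\leq 1$) fails, while part~(ii) gives only $\ln(1+q^2)\geq\ln 2$, producing an uncontrolled $|\ln\eps|$ factor. In addition, the pointwise bound $|\widehat A|\leq \eps^{\eta}\xi_1^2/\eta^2$ inherited from $S_{\eps,\eta}^c$ carries only the power $\eps^{\eta}$, which is far too weak to handle low frequencies. The paper takes a different threshold, $R_{\eps,\eta}^{(3)}=\{q\geq\eps^{-\eta}\}$: this gives $\ln(1+q^2)\gtrsim\eta|\ln\eps|$ on $R_{\eps,\eta}^{(3)}$ (hence a uniform $H^1$ bound there) and the much stronger pointwise bound $|\widehat A|\leq\eps^{2(1-\eta)}\xi_1^2/\eta^2$ on the complement. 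Even so, a single H\"older step does not close the gap between the low-frequency scale $\alpha_1=\tfrac{4(1-\eta)}{5+2s}<1$ and the BV-tail scale $\alpha_N=\tfrac{2}{1-2s}>\tfrac52$, so the paper \emph{iterates}: the complement is partitioned into finitely many shells $I_k=\{\eps^{-\alpha_k}\leq|\xi_1|\leq\eps^{-\alpha_{k+1}}\}$ with $\alpha_{k+1}=\tfrac{4(\alpha_k+1-\eta)}{5+6s}$, and on each $I_k$ a localised $L^3$ bound (obtained from the shell-dependent lower bound $q^2\geq \eta^2\eps^{2\alpha_{k+1}-2}|\widehat A|$) is combined via H\"older with the finite length $|I_k|^{1/3}$. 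The recursion $(\alpha_k)$ is strictly increasing precisely when $s<1/10$, so it terminates in finitely many steps.
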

\begin{proof}
\hk{Let $\eta \in (0,\frac{1}{2\rho})$ to be fixed later}. By\hk{Lemma
    \ref{lem-N-low} and \ref{lem-intform} we have}
  \begin{align} %
    N_\eps^{(n)}[u] \ %
    &\gtrsim \ \hk{\gam_n(\eps)} \int_{\R} \bigg(\int_{|\xi'|\leq
      \eta|\widehat{{A}}(\xi_1)|^\frac12} \frac{\xi_1^2}{\eps^2 \xi_1^2 +
      |\xi'|^2}  \dx\xi' \bigg)  |\widehat{{A}}(\xi_1)|^2 \dx\xi_1 \\ 
    &\gtrsim \ \int_{\R}
      \left [
      \begin{array}{l}
        |\xi_1| \arctan(q_{\eps,\eta}(\xi_1)) \vspace{0.6ex}\\ %
        \frac 1{|\ln \eps|} \xi_1^2 \ln (1+ {q_{\eps,\eta}}(\xi_1)^2)  
      \end{array}
    \right ]
    |\widehat{{A}}(\xi_1)|^2 \dx\xi_1
    \qquad %
    \text{for }
    \begin{TC}
      n=2, \vspace{0.6ex}\\ %
      n=3.
    \end{TC} \label{comp-1}
  \end{align}
  With the control of the anisotropic perimeter, we also have 
  \begin{align}
    P_\eps^{(n)}[u] \ %
    \hk{\geq} \ \eps \int_{\Rn} |\partial_1 u(x)| \dx x_1 \ %
    \gtrsim \ \eps |\widehat{\partial_1 {A}}(\xi_1)| 
      \hk{=} \ \eps |\xi_1| |\widehat{{A}}(\xi_1)| \ %
      \qquad \FA{\xi_1 \in \R}. \ \label{con-2} %
\end{align}
The proofs for (i)--(ii) are based on the lower bounds in
  \eqref{comp-1}--\eqref{con-2} and are given below:

  \medskip
  
  \textit{\ref{it-bound-hs}:} We
  use the decomposition
  $\R = (R_{\eps,\eta}^{(n)}) \cup (R_{\eps,\eta}^{(n)})^c$ in frequency space
  where
  \begin{align} \label{def-set-R_eps-cpt-3} %
    \dstwo{R_{\eps,\eta}^{(n)}} \ %
    := \ \Big\{ \xi_1\in\R \ : \ {q_{\eps,\eta}}(\xi_1)
    \geq \left[
    \begin{array}{ll}
      1, \qquad &\text{for $n = 2$} \\
      \eps^{-\eta}, &\text{for $n = 3$}     
    \end{array}
                      \right]
                      \Big\}.
  \end{align}
  \hkk{using the notation \eqref{def-Seps}.} The factor $\eps^{-\eta}$ for
  $n = 3$ \hkk{ensures} that the logarithm \hkk{in} \eqref{comp-1} can be
  \hkk{controlled}. Indeed, for $\xi_1 \in \dstwo{R_{\eps,\eta}^{(n)}}$ we have
  $\arctan(q_{\eps,\eta}(\xi_1)) \gtrsim 1$ and
  $\ln (1+ {q_{\eps,\eta}}(\xi_1)^2) \gtrsim |\ln(\eps)|$. Hence, restricting
  integration to $R_{\eps,\eta} := R_{\eps,\eta}^{(n)}$, from \eqref{comp-1} we
  get
  \begin{align} %
    \int_{R_{\eps,\eta}}
    |\xi_1|^{n-1}
    |\widehat {A}(\xi_1)|^2 \dx\xi_1 \ %
    \lesssim \ \dstwo{N^{(n)}_\eps}[{u}]. %
    \label{comp-2}
  \end{align}
  In particular for $s<\frac12(n-1)$ we get
  \begin{align} %
    \int_{R_{\eps,\eta}} |\xi_1|^{2s} |\widehat{{{A}}}(\xi_1)|^2 \dx\xi_1  \notag
    \ &\leq \ 2  \NI{\widehat A} + \dstwo{\Big(}\sup_{\xi_1\in \dstwo{R_{\eps,\eta}}, |\xi_1| \geq 1} \dstwo{|\xi_1|^{2s-(n-1)}} \dstwo{\Big)}\int_{\dstwo{R_{\eps,\eta}}} |\xi_1|^{n-1}
        |\widehat{A}(\xi_1)|^2 \dx\xi_1 \notag \\
    \ &\lesssim \ N_\eps^{(n)}[u] + \hk{1}. \label{cpt_unif_bound_Hs}
  \end{align}
  The above calculations show that \ref{it-bound-hs} holds with integration
  restricted to $R_{\eps,\eta}$.  To show a corresponding estimate on
  $R_{\eps,\eta}^c$ we use the further decomposition
  \begin{align*}
    R_\eps^c \ = \ \bigcup_{k=0}^{N} I_k, \qquad %
    \text{where }I_k \ = \ \Big \{ \xi_1 \in R_\eps^c \ : \ \big(\frac 1\eps \big)^{\alp_k} \leq |\xi_1| \leq \big(\frac 1\eps \big)^{\alp_{k+1}} \Big \},
  \end{align*}
  $I_0:=\dstwo{\{\xi_1\in R_{\eps,\eta}^c\sd 0\leq |\xi_1|\leq \eps^{-\alpha_1}\} } $ and $I_N:=\dstwo{\{\xi_1\in R_{\eps,\eta}^c\sd \eps^{-\alpha_N}\leq |\xi_1|\leq \infty\} }$ 
  and for a finite set of numbers $0 < \alp_1 < \ldots < \alp_{N} < \infty$ with
  $N \in \N$ to be chosen in the sequel.

  \medskip

  With the choice $\alp_N := \frac{2}{1-2s}$ and by \eqref{con-2} we get
  \begin{align} \label{est-alpha_infty} %
    \int_{I_N} |\xi_1|^{2s} |\widehat{A}(\xi_1)|^2 \dx\xi_1 %
    \  &\lupref{con-2}\lesssim \ \frac{(P_\eps^{(n)}[u])^2}{\eps^{2}} \int_{\eps^{-\alp_N}}^\infty |\xi_1|^{2s-2} \dx\xi_1 \\
    \ &\lesssim \ \eps^{(1-2s)\alp_N - 2} (P_\eps^{(n)}[u])^2 \ %
        \ \lesssim \ (P_\eps^{(n)}[u])^2 \ . \nonumber
  \end{align}
  It remains to show the corresponding estimates on $I_k$ for
  $k \leq N-1$. We give the proof separately for $n = 2,3$ with a
  different choice of numbers $\alp_k$ for $2 \leq k \leq N-1$.

\medskip

\textit{The case $n = 2$:} In this case we decompose
$R_{\eps,\eta}^c = I_0 \cup I_1$, i.e. $N = 1$ and the estimate for $I_1 = I_N$
has been given above. By construction, we have
$\NIL{q_{\eps,\eta}}{R_{\eps,\eta}^c} \leq 1$. Since $\arctan(q) \gtrsim q$ for
$q \leq 1$ from \eqref{comp-1} and by definition of $q_{\eps,\eta}$ we hence
immediately get the estimate
\begin{align} \label{A52-est} %
  \int_{I_0} |\widehat A(\xi_1)|^{\frac 52} \ d\xi_1 \ \leq \ C_\eta \:  \eps \: N_\eps^{(n)}[u].
\end{align}
By Hölder's inequality, it follows that
\begin{align*}
  \int_{I_0} |\xi_1|^{2s}|\hat{A}(\xi_1)|^2\dx\xi_1
  \ &\leq \ \NIL{\xi_1}{I_0}^{2s}   \NPL{|\hat{A}|^{\frac 52}}{1}{I_0}^{\frac 45} |I_0|^{\frac 15} \ %
  \ \upref{A52-est}\leq \ C_\eta \eps^{- 2s \alp_{1}   - \frac 15 \alp_1  + \frac 45} (N_\eps^{(n)}[u])^{\frac 45}.
\end{align*}
The right hand side above is uniformly bounded in $\eps$ if $\alp_1 := \frac {4}{1 + 10s}$ 
\hk{and by Young's inequality for $\eta:=\frac{1}{4\rho}$ we get $C_\eta (N_\eps^{(n)}[u])^{\frac 45} \leq C_\rho + C N_\eps^{(n)}[u]$.}
To close the estima\dsk{t}e we need
\dstwo{$\alp_1 \geq \alp_N$}. That is we need
$\frac {4}{1 + 10s} \geq \frac{2}{1-2s}$ or \dsk{equivalently} $s \leq \frac 1{14}$.

\medskip

\textit{The case $n = 3$:}
  For $\xi_1 \in R_{\eps,\eta}^c$ we have
  $|\hat{A}|\leq \eta^{-2} \eps^{2(1-\eta)}|\xi_1|^2$. With
  $\alp_1:=\frac{4(1-\eta)}{5+2s}$ we hence get
\begin{align} \label{proof:them:cpt:3d:decom_0} %
  \int_{I_0} |\xi_1|^{2s}|\hat{A}(\xi_1)|^2\dx\xi_1 \ \lesssim \
  \frac{\eps^{4(1-\eta)}}{\eta^4} %
  \hk{\int_0^{\eps^{-\alp_1}} \xi_1^{4+2s} \ d\xi_1} \ %
  \leq \ C_\eta \eps^{4(1-\eta)-(5+2s)\alp_1} \ %
  \leq \ \hk{C_\eta}\, .
\end{align} 
It remains the estimates on $I_k$ for $k=1,...,N-1$.  Noting that
$|I_k| \leq \eps^{-\alp_{k+1}}$ and
$\hk{\eps^{-\alp_{k}}} \leq |\xi_1| \leq \eps^{-\alp_{k+1}}$ in $I_k$,
\hk{estimate} \eqref{comp-1} implies
\begin{align} \label{proof:them:cpt:3d:decom_1} N_\eps^{(3)}[u] \ %
  &\gtrsim \ \frac 1{|\ln \eps|} \int_{I_k} \eps^{-2\alp_k} \ln\left(1+\eta^2 \eps^{2\alp_{k+1} - 2} |\widehat{A}(\xi_1)|\right) |\widehat{A}(\xi_1)|^2\ d\xi_1 \\
  &\geq \frac{C_\eta}{|\ln \eps|} \eps^{2\alp_{k+1} - 2-2\alp_k} \int_{I_k}
    |\widehat{A}(\xi_1)|^3\ d\xi_1, \notag
\end{align}
\hk{using that $\ln(1+y) \gtrsim y$ for $y \leq 1$}.  This holds if
$\eps^{2\alp_{k+1}-2}|\widehat{A}|\leq \eps^{2\alp_{k+1}-2} \hk{\leq 1}$
for $\hk{\eps \leq 1}$ if $\alp_{2} > 1$ and hence $\alp_{k+1} > 1$ for
$k \geq 1$ \hk{(this condition holds, see \eqref{def-akp1} below)}.  Hence,
\begin{align*}
  \int_{I_k} |\widehat A(\xi_1)|^3 \ d\xi_1
  \ \leq \ C_\eta |\ln \eps| \eps^{2(1+\alp_k)- 2 \alp_{k+1}} 
  N_\eps^{(3)}[u]
\end{align*}
For $k=1,...,N-1$ we hence get by Hölder's inequality
\begin{align}
\int_{I_k} |\xi_1|^{2s}|\hat{A}(\xi_1)|^2\dx\xi_1
\ &\lesssim \ \NIL{\xi_1}{I_k}^{2s}  |I_k|^{\frac 13} \Vert |\widehat A|^3\Vert_{L^1(I_k)}^{\frac23} \nonumber \\
\ &\leq \ C_\eta |\ln \eps|^{\frac 23} \eps^{\frac 43(1+\alpha_k) - (\frac 53 + 2s)\alp_{k+1}} (N_\eps^{(3)}[u])^\frac23.  \label{Hs-N-3D}
\end{align}
\hk{As in two dimensions we then apply Young's inequality to \eqref{Hs-N-3D} and
  note that} in order to be bounded, the exponent of $\eps$ has to be strictly
positive.  This motivates to define
\begin{align} \label{def-akp1} %
\alp_{k+1} \ := \ \frac{4(\alp_k+1-\eta)}{5+6s}.
\end{align}
We note that for $\hktwo{\eta < \frac{15}{4}(\frac{1}{10}-s) }$, this definition
produces a monotonically increasing sequence of $\alpha_k$ such that
$\alpha_2>1$ and $\alp_{k+1} - \alp_k \geq \frac{15}{5+6s}(\frac{1}{10} - s)  - \frac{4}{5+6s}\eta > 0$. In particular, since $\alp_2 > 1$ the Taylor approximation in \eqref{proof:them:cpt:3d:decom_1} is justified.  
Furthermore, for $0<s<\frac{1}{10}$ after finitely
many iterations we obtain $\alpha_{k+1}>\alpha_N$ which terminates the
algorithm.

\DETAILS{
$$ 
\alpha_{k+1}-\alpha_k 
\geq \frac{1}{5+6s}(\alpha_k(-1-6s) +4(1-\eta))
\geq \frac{15}{5+6s}(\frac{1}{10} - s)  - \frac{4}{5+6s}\eta
>0.
$$ 
for 
$$
\eta < \frac{15(1/10 - s)}{4} = \frac{(3/2 - 15 s)}{4} 
$$
}

\medskip

  \textit{\ref{it-bound-h12}:} For $\xi_1 \in S_{\eps,\eta}$ \hk{and since
    $\eta < \frac 14$}, we have
  $q_{\eps,\eta}(\xi_1) \hk{\geq C_\eta \eps^{\frac{\eta}{2}-1} \geq}
  C_\eta$. This yields $\arctan(q_{\eps,\eta}(\xi_1)) \hk{\geq C_\eta}$ and
    $\ln (1+q_{\eps,\eta}(\xi_1)^2) \hk{\geq C_\eta |\ln \eps|}$.  Inserting
    these estimates into \eqref{comp-1} yields assertion \ref{it-bound-h12}.
\end{proof}

We next give the proof of Theorem \ref{thm-compactness} for
  $n = 2,3$. Since the estimates have already been carried out in Lemma \ref{lem-com1}, it remains to give the functional analytic argument based on weak compactness in $H^s$ together with an additional argument \dsk{showing} that the weak limit is in the (better) space $H^\frac{n-1}{2}$.
\dsone{The proof for $n=4$ proceeds differently since the estimates  on the set $R_{\eps,\eta}$ coming from the hypergeometric function obtained in Lemma \ref{lem-intform} do no longer directly provide us with a control over a $H^s-$norm of $A$. Instead, an additional step must be carried out which we will detail in the appendix.}

\begin{proposition}[Compactness for $n \in \{ 2, 3 \}$] \label{prp-com23} %
  Let $n = 2,3$. For any sequence $u_\eps\in \AA$ with
  $E_\eps^{(n)}[u_\eps] \leq C$ and $\spt u_\eps \subset B_\rho(0)$
  for some $\rho>0$ (up to a subsequence) we have
  \begin{align} \label{top-conv2} %
    \A[u_\eps] \ \wto \ A \quad\text{ in } H^s(\R) \quad \text{as $\eps \to 0$},
  \end{align}
  for any fixed $s \in [0,\frac{1}{14}]$ if $n = 2$ and
  $s \in [0,\frac 1{10})$ if $n = 3$ and for some $A \ \in \ \AA_0^{(n)}$.
\end{proposition}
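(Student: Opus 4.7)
The strategy is to combine the two parts of Lemma \ref{lem-com1}: part (i) provides the weak compactness in $H^s(\R)$ for small $s$, while part (ii) is used to upgrade the regularity of the weak limit to the correct space $X = H^{(n-1)/2}(\R)$. The main obstacle is precisely this upgrade, since the stronger estimate from Lemma \ref{lem-com1}(ii) only controls $|\xi_1|^{n-1}|\widehat A_\eps|^2$ after restriction to the $\eps$-dependent set $S_{\eps,\eta}$; the key observation will be that on $S_{\eps,\eta}^c$ the Fourier transform $\widehat A_\eps$ is pointwise small and so the truncation disappears in the limit.

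\textbf{Step 1: weak compactness in $H^s(\R)$.} Write $A_\eps := \A[u_\eps]$. Fix $s$ in the specified range. From Lemma \ref{lem-com1}(i), using $E_\eps^{(n)}[u_\eps] = P_\eps^{(n)}[u_\eps] + N_\eps^{(n)}[u_\eps] \leq C$, we get
\begin{align*}
\int_\R (1+|\xi_1|^{2s})|\widehat A_\eps(\xi_1)|^2 \dx\xi_1 \ \leq \ C,
\end{align*}
since $\|A_\eps\|_{L^2}^2 \leq \|A_\eps\|_{L^\infty}\|A_\eps\|_{L^1} \leq |B_\rho'(0)|$. By Banach--Alaoglu there is a subsequence (not relabeled) and some $A \in H^s(\R)$ with $A_\eps \wto A$ in $H^s(\R)$, which is the conclusion \eqref{top-conv2}.

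\textbf{Step 2: properties of the limit (support and unit mass).} Since $\spt u_\eps \subset B_\rho(0)$, we have $\spt A_\eps \subset [-\rho,\rho]$, so by the compact embedding $H^s([-\rho,\rho]) \hookrightarrow L^2([-\rho,\rho])$ we obtain strong $L^2$--convergence $A_\eps \to A$ in $L^2(\R)$. In particular $A \geq 0$ a.e., $\spt A \subset [-\rho,\rho]$, and
\begin{align*}
\int_\R A \dx x_1 \ = \ \lim_{\eps\to 0}\int_\R A_\eps \dx x_1 \ = \ \lim_{\eps\to 0}\int_{\Rn} u_\eps \dx x \ = \ 1,
\end{align*}
so $A$ satisfies \eqref{A-unit}. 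Moreover, strong $L^2$ convergence together with uniform compact support of $A_\eps$ gives, via Cauchy--Schwarz, that $\widehat{A_\eps}\to \widehat A$ uniformly on compacta of $\R$, hence pointwise.

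\textbf{Step 3: upgrade to $X = H^{(n-1)/2}(\R)$.} This is the crucial step. By the definition of $S_{\eps,\eta}$ in \eqref{def-Seps}, on the complement $S_{\eps,\eta}^c$ we have
\begin{align*}
|\widehat{A_\eps}(\xi_1)| \ \leq \ \eta^{-2}\eps^\eta |\xi_1|^2 \ \longrightarrow\ 0 \qquad \text{as $\eps\to 0$, for each fixed $\xi_1 \in \R$.}
\end{align*}
Combined with the pointwise convergence $\widehat{A_\eps}(\xi_1)\to\widehat A(\xi_1)$ from Step 2, this shows that for a.e. $\xi_1$ with $\widehat A(\xi_1)\neq 0$ we have $\xi_1 \in S_{\eps,\eta}$ for all $\eps$ small enough; consequently $\chi_{S_{\eps,\eta}}(\xi_1) \to 1$ pointwise a.e. on $\{\widehat A \neq 0\}$, while on $\{\widehat A = 0\}$ the integrand is zero. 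Applying Fatou's lemma to $|\xi_1|^{n-1}|\widehat{A_\eps}(\xi_1)|^2\chi_{S_{\eps,\eta}}(\xi_1)$ and using Lemma \ref{lem-com1}(ii) yields
\begin{align*}
\int_\R |\xi_1|^{n-1}|\widehat A(\xi_1)|^2 \dx\xi_1 \ \leq \ \liminf_{\eps\to 0}\int_{S_{\eps,\eta}} |\xi_1|^{n-1}|\widehat{A_\eps}(\xi_1)|^2 \dx\xi_1 \ \lesssim \ \liminf_{\eps\to 0} N_\eps^{(n)}[u_\eps] \ \leq \ C.
\end{align*}
Since $n-1 = 2\cdot\frac{n-1}{2}$, this is precisely the homogeneous $\dot H^{(n-1)/2}(\R)$--seminorm of $A$. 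Together with $A \in L^2(\R)$ from Step 2, we conclude $A \in X$, and therefore $A \in \AA_0^{(n)}$, completing the proof.
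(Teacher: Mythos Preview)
Your proof is correct and follows the same overall strategy as the paper: use Lemma~\ref{lem-com1}(i) for weak $H^s$ compactness, then Lemma~\ref{lem-com1}(ii) together with the fact that the truncation set $S_{\eps,\eta}$ exhausts $\R$ in the limit to upgrade the regularity of $A$ to $H^{(n-1)/2}$.

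The execution of Step~3 is, however, genuinely different and somewhat more elementary. The paper extracts a further weak $H^{(n-1)/2}$--limit $B$ of the truncated functions $(\widehat{A_\eps}\chi_{S_{\eps,\eta}})^\vee$, then invokes the Paley--Wiener theorem and an auxiliary product--convergence lemma (Lemma~\ref{lem-weak_pointwise_conv}) to identify $B$ with $A$. You instead exploit the compact support of $A_\eps$ to obtain \emph{uniform} (hence pointwise) convergence $\widehat{A_\eps}\to\widehat A$ directly via Cauchy--Schwarz, combine this with the pointwise smallness $|\widehat{A_\eps}|<\eta^{-2}\eps^\eta|\xi_1|^2$ on $S_{\eps,\eta}^c$ to see that $\chi_{S_{\eps,\eta}}\to 1$ on $\{\widehat A\neq 0\}$, and finish with Fatou. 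Your route avoids both Paley--Wiener and the auxiliary lemma, at the (harmless) cost of using the compact embedding $H^s\hookrightarrow L^2$ on bounded intervals to first upgrade to strong $L^2$ convergence. One minor omission: you should fix some $\eta\in(0,\tfrac{1}{2\rho})$ explicitly before invoking Lemma~\ref{lem-com1}(ii).
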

\begin{proof} %
  Assume that $s$ satisfies the assumptions above.  Since $u_\eps$ satisfies
  $\Vert u_\eps\Vert_{L^1(\R^n)}\leq 1$ and
  $\Vert u_\eps\Vert_{L^\infty(\R^n)}\leq 1$, \dsone{we also have
    $\NT{u_\eps} \leq 1$ and} by the Banach-Alaoglu theorem, for a subsequence
  (still denoted by $u_\eps$) we have $u_\eps\wto u$ in $L^2$ and
  $u_\eps\stackrel{*}{\wto} u$ in $L^\infty$ for some $u \in L^\infty$. We write
  $A_\eps:=\A[u_\eps]$ and $A:=\A[u]$.  By Fubini's theorem we then also get
  $A_\eps\wto A$ in $L^2$ and $A_\eps\stackrel{*}{\wto} A$ in $L^\infty$. From
  Lemma \ref{lem-com1}(i) it follows that
  $\NNN{A_\eps}{\dot H^s} \leq \dstwo{C_{\rho,s}}$ uniformly \dstwo{in
    $\hk{\epsilon \in (0,1)}$}.  After selection of a subsequence and by
  Banach-Alaoglu we then have $A_\eps \wto A$ in $H^s$.

  \medskip
  
  Since $\spt u_\eps \SUS B_\rho$ we have $\spt u \SUS B_\rho$ and
  $\int_{\R^n} u(x) \dx x=1$.  It remains to show that
  $A \in H^{\frac{n-1}{2}}(\R)$: \dstwo{We fix $\eta := \frac 1{4\rho}$.}  By
  Lemma \ref{lem-com1} \ref{it-bound-h12} \dsone{and since
    $\NT{A_\eps} \leq C_\rho$, we have the uniform estimate
    $\NNN{(\widehat{A_\eps}\chi_{S_{\eps,\eta}})^\vee}{H^\frac{n-1}{2}(\R)}\leq
    C_{\rho}$. Since $H^{\frac{n-1}2}(\R)$ is a reflexive space and by
    application of Banach-Alaoglu there exists $B\in H^{\frac{n-1}2}(\R)$ such
    that, after selection of a subsequence }
    \begin{align} \label{Bcon} %
          (\widehat{A_\eps}\chi_{S_{\eps,\eta}})^\vee \ \wto \ B \qquad \dsone{\text{ in } H^{\frac{n-1}2}(\R)} \hkone{\text{ as $\eps \to 0$.}}
    \end{align}
    The global energy bound, together with the definition of $S_{\eps,\eta}$ in
    \eqref{def-Seps} yields, after extracting a subsequence, that
    $|\widehat{A_\eps}|\chi_{\R\setminus S_{\eps,\eta}} \to 0$ a.e. \hk{as
      $\eps \to 0$}.  Therefore $\chi_{\R\setminus S_{\eps,\eta}}\to 0$
    a.e. \hk{as $\eps \to 0$} by the Paley–Wiener theorem.  Since
    $\NI{\chi_{S_{\eps,\eta}}} \leq 1$ and $A_\eps \wto A$, by Lemma
    \ref{lem-weak_pointwise_conv} we then have
    $\widehat{A_\eps}\chi_{S_{\eps,\eta}} \wto \widehat{A}$ in $L^2$ \hk{as
      $\eps \to 0$}. By \eqref{Bcon} and the uniqueness of the weak limit this
    implies $A=B \in H^{\frac{n-1}{2}}$.
\end{proof}
We have used the following auxiliary result in the proof of Proposition
\ref{prp-com23}:
\begin{lemma}[Auxiliary lemma] \label{lem-weak_pointwise_conv} %
 Let $g_n\in L^2(\R),\dstwo{f_n\in L^\infty(\mathbb{R})}$\hk{, $n \in \N$,} be two sequences with $\NI{f_n} \lesssim 1$,
    $f_n\to f$ pointwise a.e. and $g_n\wto g$ in $L^2$ for some
    $g\in L^2(\R),\dstwo{f\in L^\infty(\mathbb{R})}$. Then
  \begin{align}
    f_n\cdot g_n \ \wto \ f\cdot g \qquad \text{ in  $L^2(\R)$}\hk{\text{ as $n \to \infty$}}.
  \end{align}
\end{lemma}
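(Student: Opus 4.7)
The plan is to verify weak convergence by testing against an arbitrary $\phi \in L^2(\R)$ and showing
\[
\int_\R f_n g_n \phi \dx x \ \longrightarrow \ \int_\R f g \phi \dx x.
\]
The natural splitting is
\[
\int_\R f_n g_n \phi \dx x - \int_\R f g \phi \dx x \ = \ \int_\R (f_n - f)\phi \cdot g_n \dx x + \int_\R f \phi \cdot (g_n - g) \dx x.
\]

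For the second term, I would observe that $f \phi \in L^2(\R)$, since $f \in L^\infty$ and $\phi \in L^2$. Hence by the assumed weak convergence $g_n \wto g$ in $L^2$, this term tends to $0$ directly from the definition of weak convergence.

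For the first term, I would first argue that $(f_n - f)\phi \to 0$ strongly in $L^2(\R)$ as $n \to \infty$. Indeed, $(f_n - f)\phi \to 0$ pointwise a.e.\ by the hypothesis, and the bound $\|f_n\|_{L^\infty} \leq C$ (which also forces $\|f\|_{L^\infty} \leq C$ in the pointwise a.e.\ limit) gives $|(f_n-f)\phi|^2 \leq (2C)^2 |\phi|^2 \in L^1(\R)$. The dominated convergence theorem then yields $\|(f_n - f)\phi\|_{L^2} \to 0$. Combining this with the uniform bound $\|g_n\|_{L^2} \leq C'$ (a consequence of weak convergence and the Banach--Steinhaus theorem) via Cauchy--Schwarz,
\[
\Big|\int_\R (f_n - f)\phi \cdot g_n \dx x\Big| \ \leq \ \|(f_n - f)\phi\|_{L^2} \, \|g_n\|_{L^2} \ \longrightarrow \ 0.
\]

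Adding the two bounds gives the claim. The argument is entirely standard; the only subtlety worth noting is that we use pointwise a.e.\ convergence of $f_n$ (not just weak-$\ast$ convergence in $L^\infty$), which is exactly what allows the dominated convergence step. There is no real obstacle here.
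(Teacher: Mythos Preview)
Your proof is correct and follows essentially the same approach as the paper: the same splitting into $\int (f_n-f)\phi\, g_n$ and $\int f\phi\,(g_n-g)$, dominated convergence for the first term, and weak convergence applied to $f\phi\in L^2$ for the second. Your write-up is in fact slightly more explicit (e.g.\ invoking Banach--Steinhaus for the uniform $L^2$ bound on $g_n$).
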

\begin{proof}
 For $\varphi\in L^2(\R)$, we write
  \begin{align}
    |\skp{\varphi}{f_n g_n} - \skp{\varphi}{fg}| \ %
    &\leq \ \NT{g_n} \NT{\varphi (f_n - f)} + |\skp{\varphi f}{(g_n - g)}| \ %
      =: \  I_1 + I_2.  
  \end{align}
Since $|\varphi (f_n - f)| \ \lesssim \ |\varphi| \in L^2$ we get
    $I_1\to 0$ for $n\to \infty$ by dominated convergence.  Furthermore, since
  $\varphi f\in L^2(\R)$ and by the weak convergence
  of $g_n$ we have $I_2\to 0$.
\end{proof}

\subsection{Proof of Theorem \ref{thm-gamma}}

The proof of Theorem \ref{thm-gamma} follows from the liminf estimate in
Proposition \ref{prp-lower} together with the construction of the recovery
sequence in Proposition \ref{prp-upper}.  As in the proof of Theorem
\ref{thm-compactness}, the liminf is based on the decomposition of the nonlinear
energy from Lemma \ref{lem-N-low}. To get the optimal leading order constant for
our estimate, we additionally take the limit $\eta \to 0$.  For the upper bound
we use a constant recovery sequence in our rescaled setting (for \dsone{sufficiently} smooth limit function $A$).

\begin{proposition}[Liminf estimate] \label{prp-lower} Let $n \in \{ 2, 3 \}$. Then for
  any sequence $u_\eps\in \AA$ such that \eqref{top-conv} holds, we have
  $\liminf_{\eps\to 0} E_\eps^{(n)}[u_\eps] \ \geq \ E_0^{(n)}[A]$.
\end{proposition}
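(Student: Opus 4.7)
\textbf{Proof plan for Proposition \ref{prp-lower}.} The strategy is to handle the rescaled perimeter $P_\eps^{(n)}$ and the nonlocal term $N_\eps^{(n)}$ separately, so that their $\liminf$s match the two summands in \eqref{E0-2}--\eqref{E0-3}. Throughout I work along a subsequence for which, by Proposition \ref{prp-com23} and Rellich's theorem (using the uniform support assumption), $A_\eps := \A[u_\eps] \to A$ strongly in $L^2(\R)$ and pointwise a.e., and for which $A_\eps$ is bounded in $L^\infty$ (by $\ome_{n-1}\rho^{n-1}$). In particular $A_\eps \to A$ in $L^1(\R)$ so that the Fourier transforms satisfy $\widehat{A_\eps}(\xi_1) \to \widehat{A}(\xi_1)$ for every $\xi_1 \in \R$, uniformly on compact sets.

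\textbf{Perimeter part.} First I would drop the $\eps^2 |\p_1 u_\eps|^2$ term and slice in the transverse variable via Fubini:
\begin{align*}
P_\eps^{(n)}[u_\eps] \ \geq \ \int_\R \PP_{\R^{n-1}}\big(\{x' : u_\eps(x_1,x') = 1\}\big) \dx x_1.
\end{align*}
The isoperimetric inequality in $\R^{n-1}$ then gives $\geq 2\chi_{\{A_\eps > 0\}}$ when $n = 2$ and $\geq 2\sqrt{\pi A_\eps(x_1)}$ when $n = 3$. For $n = 3$, strong $L^2$ convergence and the elementary bound $|\sqrt{a} - \sqrt{b}|^2 \leq |a - b|$ yield $\sqrt{A_\eps} \to \sqrt{A}$ in $L^2$, so $\int\sqrt{A_\eps}\dx x_1 \to \int\sqrt{A}\dx x_1$. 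For $n = 2$, pointwise a.e. convergence gives $\chi_{\{A > 0\}} \leq \liminf \chi_{\{A_\eps > 0\}}$ a.e., and Fatou yields $|\{A > 0\}| \leq \liminf |\{A_\eps > 0\}|$. This produces the leading perimeter constants $2$ and $2\sqrt{\pi}$ in \eqref{E0-2}--\eqref{E0-3}.

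\textbf{Nonlocal part.} For fixed $\eta \in (0,1/\rho)$, Lemma \ref{lem-N-low} (dropping $N_{\eps,\eta,2} \geq 0$) together with Lemma \ref{lem-intform} gives
\begin{align*}
N_\eps^{(n)}[u_\eps] \ \geq \ (1 - \rho^2\eta^2) \, \frac{\gam_n(\eps)}{(2\pi)^{n-1}} \int_\R |\widehat{A_\eps}(\xi_1)|^2 \, J_\eps^{(n)}(\xi_1) \dx\xi_1,
\end{align*}
where for $n = 2$, $J_\eps^{(2)}(\xi_1) = \frac{2|\xi_1|}{\eps}\arctan(q_{\eps,\eta}(\xi_1))$, and for $n = 3$, $J_\eps^{(3)}(\xi_1) = \pi\xi_1^2\ln(1 + q_{\eps,\eta}(\xi_1)^2)$, with $q_{\eps,\eta}$ as in \eqref{def-Seps}. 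For a.e.\ $\xi_1$ with $\widehat{A}(\xi_1) \neq 0$ one has $q_{\eps,\eta}(\xi_1) \to \infty$ as $\eps \to 0$; for $n = 2$ this gives $\eps J_\eps^{(2)} \to \pi|\xi_1|$, while for $n = 3$ one uses the crucial scaling $\gam_3(\eps)\cdot 7|\ln\eps| \to 1$ (which follows from $|\ln\eps| = \frac17\ln\mu + \frac37\ln\ln\mu$) together with $\ln(1+q_{\eps,\eta}^2) = 2|\ln\eps| + O(1)$ to obtain $\gam_3(\eps) J_\eps^{(3)} \to \frac{2\pi}{7}\xi_1^2$. Pointwise convergence $\widehat{A_\eps}(\xi_1) \to \widehat{A}(\xi_1)$ and Fatou then yield
\begin{align*}
\liminf_{\eps\to 0} N_\eps^{(n)}[u_\eps] \ \geq \ (1 - \rho^2\eta^2) \cdot
\begin{TC}
\frac{1}{2}\,\||\p_1|^{\frac12}A\|_{L^2}^2, & n = 2, \\
\frac{1}{14\pi}\,\|\p_1 A\|_{L^2}^2, & n = 3.
\end{TC}
\end{align*}
Letting $\eta \to 0$ removes the $(1 - \rho^2\eta^2)$ prefactor; summing with the perimeter bound produces $E_0^{(n)}[A]$.

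\textbf{Main obstacle.} The delicate step is the passage to the limit inside the Fourier integral for $n = 3$: the factor $\gam_3(\eps)$ is tailored precisely to cancel the logarithmic growth of $\ln(1+q_{\eps,\eta}^2)$ from Lemma \ref{lem-intform}, and one must verify that the lower-order contributions (the $\ln(\eta|\widehat{A_\eps}|^{1/2}/|\xi_1|)$ piece, and the error between $7|\ln\eps|$ and $7|\ln\eps| - 3\ln\ln\mu$) are negligible in the limit despite being multiplied by $\gam_3(\eps)$, which only decays as $1/|\ln\eps|$. A minor but genuine subtlety for $n = 2$ is the lower semicontinuity of $|\{A > 0\}|$, which is not automatic from weak $H^s$ convergence but is supplied by a.e.\ convergence via Rellich in conjunction with the uniform support hypothesis.
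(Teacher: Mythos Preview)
Your argument is correct. The perimeter step matches the paper exactly: drop the $\eps^2|\partial_1 u|^2$ term, slice in $x_1$, apply the isoperimetric inequality in $\R^{n-1}$, and pass to the limit via a.e.\ convergence of $A_\eps$ (for $n=2$) or strong $L^2$ convergence of $\sqrt{A_\eps}$ (for $n=3$).

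For the nonlocal term you take a genuinely different route. The paper restricts the $\xi_1$--integration to the set $S_{\eps,\eta} = \{q_{\eps,\eta} \geq \eps^{\eta/2 - 1}\}$ on which $\arctan(q_{\eps,\eta}) \to \pi/2$ resp.\ $\ln(1+q_{\eps,\eta}^2)/|\ln\eps| \to 2-\eta$ \emph{uniformly}, and then invokes the weak convergence $(\widehat{A_\eps}\chi_{S_{\eps,\eta}})^\vee \rightharpoonup A$ in $H^{(n-1)/2}$ (established separately in Lemma \ref{lem-com1}\ref{it-bound-h12}) together with weak lower semicontinuity of the $\dot H^{(n-1)/2}$--norm. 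You instead observe that $A_\eps \to A$ in $L^1$ forces $\widehat{A_\eps}(\xi_1) \to \widehat{A}(\xi_1)$ for every $\xi_1$, so the nonnegative integrand $|\widehat{A_\eps}|^2\,\gamma_n(\eps)J_\eps^{(n)}$ converges pointwise to $|\widehat A|^2$ times the correct multiplier wherever $\widehat A \neq 0$, and Fatou's lemma closes the argument. This is more elementary: it bypasses Lemma \ref{lem-com1}\ref{it-bound-h12} entirely and handles the lower--order terms in $\ln(1+q_{\eps,\eta}^2)$ automatically (they vanish after division by $|\ln\eps|$). The paper's route, by contrast, ties the liminf estimate more tightly to the compactness machinery and makes the role of the set $S_{\eps,\eta}$ uniform across both theorems.
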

\begin{proof}
  To estimate the nonlocal part for both $n=2,3$, we will use
  \eqref{calc_inner_int} and restrict the integration in $\xi_1$--direction to
  the set ${S_{\eps,\eta}}$ defined in \eqref{def-Seps}.
    
  \medskip
  
  \textit{The case $n=3$:} Taking a subsequence, we can assume $A_\eps \to A$
  a.e.. By the assumption $\spt u_\eps \SUS B_\rho$ we also have
  $\spt A_\eps \SUS [-\rho,\rho]$. In view of Theorem \ref{thm-compactness} we
  then get $\NP{\sqrt{A_\eps}}{1} \to \NP{\sqrt{A}}{1}$.  By the isoperimetric
  inequality, this implies
  \begin{align} \label{calc_per_3dim} %
    P_\eps^{(n)}[u_\eps] \ &\geq \ \int_{\R^3}
    |\nabla'u_\eps(x)| \dx x \ \geq \ 2\sqrt{\pi} \int_\R \sqrt{A_\eps(x_1)} \
    \dx x_1 \to \ 2\sqrt{\pi} \int_\R \sqrt{A(x_1)} \dx x_1.
  \end{align}
  For the estimate of the nonlocal term we assume that $\eta \in (0,\frac{1}{4})$
  is small enough such that $1 - {\rho^2}\eta^2 > 0$ in \eqref{cond-eta}. Using
  \eqref{calc_inner_int} and \dstwo{since by Lemma \ref{lem-com1} (ii) it holds
    that} $(\widehat A_\eps \chi_{{S_{\eps,\eta}}})^\vee\wto A$ in $H^1$ \hk{for
    $\eps \to 0$ for fixed $\eta$} we get
  \begin{align} \label{calc_N_3dim_liminf}
    \liminf_{\eps\to 0} N_\eps^{(n)}[u_\eps] \ &\lupref{cond-eta}{\geq} \
                                                 \liminf_{\eps\rightarrow 0} \frac{1}{(2\pi)^2} \frac{1-{\rho^2}\eta^2}{7|\ln\eps|} \int_\R \int_{|\xi'|\leq \eta |\widehat{A_\eps}(\xi_1)|^{\frac 12}} \frac{\xi_1^2}{\eps^2\xi_1^2 + |\xi'|^2} \big|\widehat{ A_\eps}(\xi_1)\big|^2   \dx\xi' \dx\xi_1 \nt \\
                                               &\lupref{calc_inner_int}\geq \ \frac{1-{\rho^2}\eta^2}{28\pi}\  \liminf_{\eps\to  0} \frac{\ln(1+\eps^{\eta-2})}{|\ln\eps|} \int_{{S_{\eps,\eta}}} \xi_1^2 |\widehat{A_\eps}(\xi_1)|^2 \dx\xi_1 \nt \\
     &\geq \frac{1}{28\pi}(2-\eta)(1-{\rho^2}\eta^2) \liminf_{\eps\to  0} \int_{{S_{\eps,\eta}}} \xi_1^2|\widehat{ A_\eps}(\xi_1)|^2 \dx\xi_1  \nonumber \\
                                               &\geq \ \frac{1}{28\pi}(2-C{_\rho}\eta) \int_{\R} \xi_1^2|\widehat{A}(\xi_1)|^2 \dx\xi_1,
  \end{align}
  by the weak lower semicontinuity of the $\dot{H}^1-$norm.  Combining \eqref{calc_per_3dim} with \eqref{calc_N_3dim_liminf} yields the lower bound
  for $E_\eps^{(3)}[u_\eps]$ for $\eta \to 0$.
  
\medskip
  
\textit{The case $n=2$:} The perimeter of any \dsone{measurable subset of $\R$ with positive measure} is estimated from below by $2$. \dsone{In particular, if $A_\eps(x_1)>0$ this implies that $\int_\R |\nabla'u_\eps(x_1,x')|\dx x'\geq 2$.}
By the lower semicontinuity of the \ds{measure of the positivity set}, this implies
  \begin{align} \label{calc_per_2dim} %
    \liminf_{\eps\to 0} P_\eps^{(n)}[u_\eps] \ \lupref{def-Peps}\geq \
    \liminf_{\eps\to 0} \ \int_{\R^2} |\nabla' u_\eps(x)| \dx x \ \geq \ 2 \ds{|\{x\in\R\sd A(x)>0\}|}.
  \end{align}
  As before we assume that $\eta \in (0,\frac 14)$ is
  small enough such that $1 - {\rho^2}\eta^2 > 0$ in \eqref{cond-eta}. With
  \eqref{calc_inner_int} and the definition of ${S_{\eps,\eta}}$ in
  \eqref{def-Seps} we get
  \begin{align} \label{calc_N_2dim_liminf}
      \liminf_{\eps\to 0} N_\eps^{(n)}[u_\eps] \ &\lupref{cond-eta}{\geq} \
       \liminf_{\eps\rightarrow 0} \frac{1}{2\pi} (1-{\rho^2}\eta^2) \eps \int_\R \int_{|\xi'|\leq \eta
          |\widehat{A_\eps}(\xi_1)|^{\frac 12}} \frac{\xi_1^2}{\eps^2\xi_1^2 + |\xi'|^2} \big|\widehat{ A_\eps}(\xi_1)\big|^2   \dx\xi' \dx\xi_1 \nt \\
      &\lupref{calc_inner_int}\geq \ \frac 1\pi (1-{\rho^2}\eta^2)\  \liminf_{\eps\to  0} \Big( \arctan (\eps^{\frac{\eta}2 -1}) \int_{{S_{\eps,\eta}}} |\xi_1| |\widehat{A_\eps}(\xi_1)|^2 \dx\xi_1 \Big) \nt \\
      &\geq \ \frac12(1-{\rho^2}\eta^2) \int_\R |\xi_1| |\widehat{A}(\xi_1)|^2 \dx\xi_1,
  \end{align}
  since $(\widehat A_\eps \chi_{{S_{\eps,\eta}}})^\vee\wto A$ in $H^{\frac 12}$ (\dstwo{by Lemma \ref{lem-com1} (ii)}), by weak lower semicontinuity of the $\hhom$--norm and since
  $\arctan (\eps^{\frac{\eta}2 -1}) \to \frac \pi2$ for $\eps \to 0$. The result
  follows by adding \eqref{calc_per_2dim} and \eqref{calc_N_2dim_liminf} and
  taking the limit $\eta \to 0$.
\end{proof}

Before we give the proof of the recovery sequence, we derive a series
representation for the Fourier transform of a symmetric characteristic
function. In our application, we will consider functions of the form
$A(x_1) = \ome_{n-1} \hkk{\rho}^{n-1}(x_1)$:
\begin{lemma}[Fourier transform for rotationally symmetric
  set] \label{lem-app-ft} %
  Let $n \in \N$.  For $u \in BV(\Rn)$ of the form
  $u(x_1,x') := \chi_{\left(0,{\hkk{\rho}}(x_1)\right)}(|x'|)$ for
  ${\hkk{\rho}} \in \cciL{\R,[0,\infty)}$, we have
  \begin{align} \label{app-prop-ft-n} %
    \widehat u(\xi) \ %
    &= \ \frac{\ome_{n-1}}{(2\pi)^{\frac n2}}\sum_{k=0}^\infty \frac{(-1)^k
      |\xi'|^{2k}}{k!\Gam(k+1+\frac{n-1}{2})2^{2k+\frac{n-1}{2}}} \int_{\spt
      {\hkk{\rho}}} e^{-i x_1\xi_1} {\hkk{\rho}}(x_1)^{2k + n-1} \dx x_1
  \end{align}
  and there exists a constant $C<\infty$ such that
  \begin{align} \label{app-prop-ft-n-decay} |\widehat{u}(\xi_1,\xi')| \ \leq \ C
    |\xi'|^{-\frac{n}{2}} \int_{\spt \hkk{\rho}} \hkk{\rho}^{\frac n2 - 1}(x_1)\dx x_1 .
  \end{align}
\end{lemma}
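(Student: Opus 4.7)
The plan is to exploit the rotational symmetry of $u$ in the transverse variable $x'$: for each fixed $x_1$ the inner $(n-1)$-dimensional Fourier integral is the Fourier transform of a ball of radius $\rho(x_1)$ in $\R^{n-1}$, which can be written in closed form via Bessel functions. Expanding the Bessel function in a power series will then give the series representation, and a standard uniform bound for $J_\nu$ will give the decay estimate.

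\emph{Step 1 (Reduction to a Bessel kernel).} I first interchange the $x_1$- and $x'$-integrations to write
\[\widehat u(\xi_1,\xi') \ = \ \frac{1}{(2\pi)^{n/2}} \int_\R e^{-ix_1 \xi_1} \Big(\int_{|x'|\leq \rho(x_1)} e^{-ix'\cdot \xi'}\,dx'\Big) \, dx_1.\]
The Funk--Hecke identity $\int_{S^{m-1}} e^{-i\zeta\cdot\omega}\,d\sigma(\omega) = (2\pi)^{m/2}|\zeta|^{-(m-2)/2} J_{(m-2)/2}(|\zeta|)$ applied on $S^{n-2}$, combined with the radial Bessel identity $\int_0^R r^{\nu+1}J_\nu(ar)\,dr = R^{\nu+1}J_{\nu+1}(aR)/a$ with $\nu=(n-3)/2$, yields the closed form
\[\int_{|x'|\leq R} e^{-ix'\cdot\xi'}\,dx' \ = \ (2\pi)^{(n-1)/2}\, R^{(n-1)/2}\,|\xi'|^{-(n-1)/2}\, J_{(n-1)/2}(R|\xi'|).\]

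\emph{Step 2 (Series expansion).} Next I insert the power series $J_{(n-1)/2}(z) = \sum_{k\geq 0} \frac{(-1)^k}{k!\,\Gamma(k+1+(n-1)/2)}(z/2)^{2k+(n-1)/2}$ with $z=\rho(x_1)|\xi'|$ into the previous display, multiply the prefactor $\rho^{(n-1)/2}|\xi'|^{-(n-1)/2}$ through the series so that only even powers $|\xi'|^{2k}$ remain, and swap $\sum$ and the $x_1$-integral. This interchange is justified by dominated convergence: $\rho \in C_c^\infty$, and for each fixed $\xi'$ the partial sums are bounded uniformly on $\spt\rho$ by a continuous function of $\xi'$. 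Matching constants produces the identity \eqref{app-prop-ft-n}.

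\emph{Step 3 (Uniform decay).} For \eqref{app-prop-ft-n-decay} I invoke the composite Bessel bound $|J_\nu(z)|\leq C_\nu (1+z)^{-1/2}$, valid for $\nu \geq 0$; this follows by combining the boundedness of $J_\nu$ on $[0,1]$ with the Hankel asymptotic $J_\nu(z)=O(z^{-1/2})$ as $z\to\infty$. Substituting into the expression from Step~1 gives
\[|\widehat u(\xi)| \ \leq \ C\, |\xi'|^{-(n-1)/2}\int_{\spt\rho} \frac{\rho(x_1)^{(n-1)/2}}{(1+\rho(x_1)|\xi'|)^{1/2}}\,dx_1.\]
The trivial pointwise inequality $\rho^{1/2}(1+\rho|\xi'|)^{-1/2} \leq |\xi'|^{-1/2}$ (equivalent to $\rho|\xi'|\leq 1+\rho|\xi'|$) then extracts the missing $|\xi'|^{-1/2}$ and reduces the integrand to $\rho^{n/2-1}(x_1)$, which is exactly \eqref{app-prop-ft-n-decay}.

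\emph{Main obstacle.} Once the Bessel representation is in hand the rest is direct computation. The only subtle point is that the naive large-argument bound $|J_\nu(z)|\lesssim z^{-1/2}$ fails near $z=0$; it is essential to use the uniform composite form $(1+z)^{-1/2}$, since otherwise the pointwise estimate above would blow up on the (possibly large) part of $\spt\rho$ where $\rho(x_1)|\xi'|\lesssim 1$.
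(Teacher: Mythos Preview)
Your proof is correct and follows essentially the same route as the paper: both obtain the Bessel-function representation
\[
\widehat u(\xi)=\frac{1}{(2\pi)^{1/2}}\,|\xi'|^{-(n-1)/2}\int_{\spt\rho} e^{-ix_1\xi_1}\rho(x_1)^{(n-1)/2}J_{(n-1)/2}\big(|\xi'|\rho(x_1)\big)\,dx_1
\]
via the Hankel transform of the ball in the transverse variables, then plug in the power series of $J_{(n-1)/2}$ for \eqref{app-prop-ft-n} and a large-argument bound on $J_{(n-1)/2}$ for \eqref{app-prop-ft-n-decay}.

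One remark on your ``main obstacle'': the concern is unnecessary. For $\nu\ge 0$ the Bessel function satisfies $|J_\nu|\le 1$ on $[0,\infty)$, so the crude bound $|J_\nu(t)|\lesssim t^{-1/2}$ already holds for \emph{all} $t>0$ (it is trivially true on $(0,1]$ since $t^{-1/2}\ge 1$ there). The paper simply inserts this directly into the Bessel representation and reads off $\rho^{(n-1)/2}(|\xi'|\rho)^{-1/2}=|\xi'|^{-1/2}\rho^{n/2-1}$, which is one line. Your detour through the composite bound $(1+z)^{-1/2}$ and the auxiliary inequality $\rho^{1/2}(1+\rho|\xi'|)^{-1/2}\le|\xi'|^{-1/2}$ is valid but not needed.
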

\begin{proof}
  We apply the Hankel transform to the radially symmetric function
  $u(\cdot,\xi')$ for fixed $x_1\in\R$ (cf. \cite[B.5]{Grafakos2009}). This
  yields
  \begin{equation} \label{app-prop-ft-dim-n}
    \begin{split}
      \widehat{{u}}(\xi_1,\xi') \ %
      &= \ \frac{\ome_{n-1}}{(2\pi)^{\frac n2}} \int_{\spt {\hkk{\rho}}}
      e^{-i x_1\xi_1} \frac{1}{|\xi'|^{\frac{n-3}{2}}} \int_{0}^\infty {\sig}^{\frac{n-1}{2}} \chi_{\left(0,{\hkk{\rho}}(x_1)\right)}({\sig}) J_{\frac{n-3}{2}}(|\xi'| {\sig}) \ \dx{\sig} \ \dx x_1 \\
      &= \ \frac{\ome_{n-1}}{(2\pi)^{\frac n2}}\frac{1}{|\xi'|^{\frac{n-1}{2}}}
      \int_{\spt {\hkk{\rho}}} e^{-i x_1\xi_1} {\hkk{\rho}}(x_1)^{\frac{n-1}{2}}
      J_{\frac{n-1}{2}}(|\xi'| {\hkk{\rho}}(x_1)) \dx x_1.
    \end{split}
  \end{equation}
  Here,
  $J_{\alpha} = \sum_{k=0}^\infty (-1)^k(k!\ \Gam(k+1+\alp))^{-1}
  (\frac{z}{2})^{2k+\alp}$ is the Bessel function of first kind of order
  $\alpha \in \frac 12(\N - 1)$. With this series representation \eqref{app-prop-ft-n} follows. The bound
    \eqref{app-prop-ft-n-decay} follows from \eqref{app-prop-ft-dim-n} by applying the estimate $J_\alp(t) \lesssim t^{-\frac 12}$ for $\alp=\frac{n-1}{2} > 0$.
\end{proof}

\begin{proposition}[Recovery sequence] \label{prp-upper} %
  For any $A\in \AA_0^{(n)}$, $n \in \{2,3 \}$, there is a sequence
  $u_\eps\in\AA$ such that \eqref{top-conv} holds and
  $\limsup_{\eps\to 0} E_\eps^{(n)}[u_\eps] \leq E_0^{(n)}[A]$.
\end{proposition}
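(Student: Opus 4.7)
The plan is to exhibit an essentially constant--in--$\eps$ recovery sequence for a dense class of smooth profiles and then conclude by a diagonal argument.

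By standard density arguments I would first reduce to limit profiles of the form $A(x_1) = \ome_{n-1}\rho(x_1)^{n-1}$ with $\rho \in \cciL{\R,[0,\infty)}$ and $A$ of unit mass. For $n = 3$, where $A \in H^1$, a mollification suffices, using continuity of $A \mapsto \int \sqrt A$ on nonnegative $L^1$--functions together with continuity of $\|A'\|_{L^2}$ under convolution. For $n = 2$, the perimeter--like term $2|\{A > 0\}|$ in $E_0^{(2)}$ is not continuous under arbitrary $H^{1/2}$--approximation, so one first truncates $A$ from below on $\{A > \d\}$ and then mollifies on a scale smaller than $\dist(\{A \geq \d\},\{A = 0\})$, producing approximants $A_\d$ with $|\{A_\d > 0\}| \to |\{A > 0\}|$ and convergent $H^{1/2}$--seminorm as $\d \to 0$. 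Once reduced to smooth $\rho$, define $u(x_1,x') := \chi_{(0,\rho(x_1))}(|x'|)$ and choose the constant sequence $u_\eps := u \in \AA$, for which $\A[u_\eps] = A$ and \eqref{top-conv} are trivial. The perimeter term is controlled via
\begin{align*}
P_\eps^{(n)}[u_\eps] \ \leq \ \int_{\Rn} |\nabla' u|\dx x + \eps \int_{\Rn} |\p_1 u|\dx x,
\end{align*}
where slicing in $x_1$ gives $\int |\nabla' u|\dx x = 2|\{A > 0\}|$ for $n = 2$ and $\int 2\pi \rho \dx x_1 = 2\sqrt{\pi}\int \sqrt A \dx x_1$ for $n = 3$, matching the first term in $E_0^{(n)}$, while the second integral is finite and disappears as $\eps \to 0$.

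For the nonlocal term, use $\widehat u(\xi_1, 0) = (2\pi)^{-(n-1)/2}\widehat A(\xi_1)$, the decay estimate \eqref{app-prop-ft-n-decay}, and the Taylor decomposition \eqref{u-deco}, whose linear term vanishes by symmetry of $u$ in $x'$. Split the $\xi'$--integral at a fixed cutoff $|\xi'| = \beta$. On $|\xi'| > \beta$, the decay $|\widehat u| \lesssim |\xi'|^{-n/2}$ together with polar integration produces a contribution of size $O(\gam_n(\eps))$, which vanishes in the limit for $n = 2, 3$. On $|\xi'| \leq \beta$, replace $|\widehat u(\xi)|^2$ by $|\widehat u(\xi_1, 0)|^2$ modulo a remainder $R := \widehat u - \widehat u(\xi_1,0)$ of size $O(|\xi'|^2)$ (uniformly in $\xi_1$, since $\int |x'|^2 u\dx x < \infty$ for smooth $\rho$); the leading contribution is
\begin{align*}
\gam_n(\eps) \int_\R \frac{|\widehat A(\xi_1)|^2}{(2\pi)^{n-1}} \left(\int_{|\xi'|\leq \beta}\frac{\xi_1^2}{\eps^2\xi_1^2 + |\xi'|^2}\dx \xi'\right) \dx \xi_1,
\end{align*}
which by Lemma \ref{lem-intform} converges to $\frac{1}{2}\int |\xi_1| |\widehat A|^2\dx \xi_1$ for $n = 2$ (since $\gam_2(\eps)\cdot\frac{2}{\eps}|\xi_1|\arctan(\beta/(\eps|\xi_1|)) \to \pi|\xi_1|$) and to $\frac{1}{14\pi}\int \xi_1^2 |\widehat A|^2\dx \xi_1$ for $n = 3$ (since $\gam_3(\eps)\cdot \pi\xi_1^2\ln(1+(\beta/(\eps|\xi_1|))^2) \to \frac{2\pi}{7}\xi_1^2$). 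The cross--term $2|\widehat u(\xi_1,0)||R|$ and the quadratic error $|R|^2$ arising from $|\widehat u|^2 \leq (|\widehat u(\xi_1,0)| + |R|)^2$ are handled by Cauchy--Schwarz and the $k \geq 1$ versions of Lemma \ref{lem-intform}, producing strictly smaller powers of $|\ln\eps|$ for $n = 3$ and an extra factor $\eps$ for $n = 2$.

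The main technical difficulty is the $n = 3$ case, where the target constant $\frac{1}{14\pi}$ emerges from a precise cancellation between the logarithmic divergence of the transversal integral and the logarithmic smallness of $\gam_3(\eps)$. One has to verify that every error term picks up a strictly lower power of $|\ln\eps|$, which is ensured by the quadratic vanishing of $R$ combined with the higher--$k$ hypergeometric formulas of Lemma \ref{lem-intform}. A secondary subtlety is the density reduction for $n = 2$, where preserving $|\{A > 0\}|$ in the limit is not automatic under $H^{1/2}$--approximation and requires the truncation--then--mollify procedure described above.
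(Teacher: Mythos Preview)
Your overall strategy—reduce to smooth compactly supported $A$, take the constant recovery sequence $u(x)=\chi_{(0,\rho(x_1))}(|x'|)$, and split the nonlocal term into a far zone (controlled by the Bessel decay \eqref{app-prop-ft-n-decay}) and a near zone (controlled by a Taylor expansion of $\widehat u$)—is exactly the paper's. The perimeter estimate and the identification of the leading constant via Lemma~\ref{lem-intform} are correct.

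The gap is your choice of a \emph{fixed} cutoff $|\xi'|=\beta$. Neither side of the splitting is then integrable in $\xi_1$. On the far zone, the bound $|\widehat u|^2\lesssim |\xi'|^{-n}$ carries no decay in $\xi_1$; polar integration gives
\[
\xi_1^{2}\int_{\beta}^{\infty}\frac{dr}{r^{2}(\eps^{2}\xi_1^{2}+r^{2})}
=\frac{1}{\eps^{2}}\Big(\frac{1}{\beta}-\frac{1}{\eps|\xi_1|}\arctan\frac{\eps|\xi_1|}{\beta}\Big),
\]
which tends to $\frac{1}{\eps^{2}\beta}$ for $|\xi_1|\gg\beta/\eps$, and the resulting $\xi_1$-integral diverges (so the claimed $O(\gamma_n(\eps))$ bound is false). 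On the near zone, the Taylor remainder estimate $|R|\lesssim|\xi'|^{2}$ is uniform in $\xi_1$; by Lemma~\ref{lem-intform} the quadratic error $|R|^{2}\lesssim|\xi'|^{4}$ produces $\gamma_n(\eps)\int_{\R}\xi_1^{2}\,\beta^{\,n+1}\,d\xi_1=\infty$. (The cross term is fine, because it retains a factor $|\widehat A(\xi_1)|$ which is Schwartz.)

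The paper's remedy is to make the cutoff grow with $|\xi_1|$: one takes $\sigma(\xi_1)=1+|\xi_1|^{2}$ and splits along $|\xi'|=\sigma$. Then on the far zone the same Bessel decay gives the integrable bound $\xi_1^{2}\sigma^{-3}$; on the near zone one uses the \emph{full} series expansion \eqref{app-prop-ft-n} rather than a Taylor remainder, so that each term contributes $|\xi'|^{2k}S_k(\xi_1)$ with $S_k=\big(A^{\frac{2k}{n-1}+1}\big)^{\wedge}$ Schwartz, and the $k\geq 1$ contributions $\gamma_n(\eps)\int \xi_1^{2}\sigma^{\,2k+n-3}|S_k(\xi_1)|^{2}\,d\xi_1$ are finite and $o(1)$. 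With this modification your argument goes through verbatim.
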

\begin{proof}
  By approximation we assume $A \in C_c^\infty(\R)$, the result in the general
  case then follows by taking a diagonal sequence.  We choose the constant
  recovery sequence $u_\eps\ := \ u \in \AA$ where
  $u(x) := \chi_{(0,\hkk{\rho}(x_1))}(|x'|)$ with $\hkk{\rho}$ given by
  $A(x_1) = \ome_{n-1} \hkk{\rho}^{n-1}(x_1)$.  The perimeter terms are easily
  estimated:
  \begin{align} \label{calc_Per_2dim_limsup} %
    P_\eps^{(n)}[u] %
    &= \int_{\R^2} \sqrt{|\nabla' u(x)|^2 + \eps^2 |\partial_1 u(x)|^2 } \dx x  \\ %
    &\leq %
    \begin{TC}
      \displaystyle 2|\dsk{\{A>0\}}| + C \eps \Vert A\Vert_{W^{1,1}(\R)} & \text{for $n=2$,}\\
      \displaystyle \sqrt{\pi} \int_\R \sqrt{{A(x_1)}} \dx x_1 + C \eps &
      \text{for $n=3$.}
    \end{TC}
  \end{align}
  For the estimate of the nonlocal term $N_\eps^{(n)}[u_\eps]$, we recall that
  according to Lemma \ref{lem-app-ft} the Fourier transform of $u$ is given by
  \begin{align}\label{app-prop-ft_n}
    \widehat u(\xi) \ %
    &= \ \frac{\ome_{n-1}}{(2\pi)^{\frac n2}}\sum_{k=0}^\infty
      \frac{(-1)^k |\xi'|^{2k}}{k!\Gam(k+1+\frac{n-1}{2})2^{2k+\frac{n-1}{2}}}
      {S_k(\xi_1)\, , }
  \end{align}
  where $S_k \hkk{:=} (A^{\frac{2k}{n-1}+1})^\wedge$ are Schwartz functions. In
  particular, $S_0 = \widehat{A}$.  We decompose the Fourier space into two
  domains, i.e. $\R^n = \DD \cup \DD^c$, where
  $\DD\ := \ \{ (\xi_1,\xi')\in\R^n \sd |\xi'|\geq \sig\ := \ 1 + |\xi_1|^{2}
  \}$ and we claim that the contribution of this set to the nonlocal energy is
  zero for $\eps\to 0$.  From \eqref{app-prop-ft-n-decay} in Lemma
  \ref{lem-app-ft} we know that
  $|\widehat{u}(\xi)|^2\lesssim \frac{1}{|\xi'|^n}$ since $A$ is bounded. With
  $m_\eps(\xi) := \frac{\xi_1^2}{\eps^2\xi_1^2 + |\xi'|^2}$ this implies
  \begin{align*}
    \int_{\DD} m_\eps(\xi) |\widehat{u}(\xi)|^2 \dx\xi \ %
    &\ \lesssim \  \ \int_\R \int_{\sig}^\infty \frac{\xi_1^2}{\eps^2\xi_1^2 + |\xi'|^2} \frac{1}{|\xi'|^n} \dx\xi' \dx\xi_1 \ %
      \ \lesssim \ \ \int_\R \frac{\xi_1^2}{\sig^{3}} \dx\xi_1.
  \end{align*}
    Since we chose
  $\sig = 1+|\xi_1|^{2}$, the term $\xi_1^2 \sig^{-3}$ is integrable and thus
  \begin{align} \label{calc_ub_3d_domain1} %
    \gam_n(\eps)\int_{\DD} m_\eps(\xi) |\widehat{u}(\xi)|^2 \dx\xi \ %
    \lesssim\ \gam_n(\eps) \, .
  \end{align}
  The second domain is the set $\R^n\setminus \DD$ .  In this region we can
  neglect higher order terms in $|\xi'|$.  For $k\geq 1$ we have by
  \eqref{calc_inner_int_expansion} of Lemma \ref{lem-intform} for $\beta=\sig$
  \begin{align} \label{calc_ub_2d_domain2a} %
  \int_{|\xi'|\leq \sig} m_\eps(\xi) |\xi'|^{2k} \dx\xi' \ %
    &\lesssim \ |\xi_1|^2 \sig^{2k+n-3} + \eps |\xi_1|^3 \sig^{2k+n-4} \ .
  \end{align}
\dsk{
With \eqref{app-prop-ft_n} and since $A$ is smooth, \eqref{calc_ub_2d_domain2a} implies
\begin{align} \label{calc_ub_2d_domain2aa}
\gamma_n(\eps)\int_{\DD^c} m_\eps(\xi) |\widehat{u}(\xi)|^2 \dx\xi \ %
 &\ \leq \ \frac{\gam_n(\eps)}{(2\pi)^{n-1}}
    \int_{\DD^c} m_\eps(\xi) |\widehat{A}(\xi_1)|^2 \dx\xi  + \gamma_n(\eps) C_A\, .
\end{align}
}Evoking Lemma \ref{lem-intform} once again with $\beta=1+\xi_1^2$ we
  get by \eqref{calc_inner_int} for $n = 2$, resp. $n=3$
  \begin{align} \label{calc_ub_2d_domain2b} %
    \frac{\gam_n(\eps)}{(2\pi)^{n-1}}
    \int_{\DD^c} m_\eps(\xi) |\widehat{A}(\xi_1)|^2 \dx\xi %
    &= \begin{cases} %
      \displaystyle \frac 1\pi \displaystyle \int_\R |\xi_1|
      \arctan\Big(\tfrac{1+\xi_1^2}{\eps|\xi_1|} \Big)
      |\widehat{A}(\xi_1)|^2 \dx\xi_1, \vspace{0.3ex}\\
      \displaystyle \ \frac{1}{28|\ln\eps|} \int_{\R} \xi_1^2\ln\Big( 1 +
      \Big|\tfrac{ 1+\xi_1^2 }{\eps|\xi_1|}\Big|^2 \Big) |\widehat{A}(\xi_1)|^2 \dx\xi_1.
      \end{cases}
  \end{align}
  \dsk{ Passing to the limit $\epsilon\rightarrow 0$ in the perimeter estimate \eqref{calc_Per_2dim_limsup} and the estimates for the nonlocal term \eqref{calc_ub_3d_domain1}, \eqref{calc_ub_2d_domain2aa} and \eqref{calc_ub_2d_domain2b} yields the assertion for $A_\eps\in C_c^\infty(\R)$. }
\end{proof}

\subsection{Proof of Theorem \ref{thm-limit}} \label{sus-limit} %

In this section, we calculate the solutions of the limit
  problems (see Fig. \ref{fig-min}), thus giving the proof of Theorem
  \ref{thm-limit}.

\medskip

Both energies $E_0^{(n)}$, $n = 2, 3,$ from \eqref{E0-2}--\eqref{E0-3} are
invariant under translation.
In the following, we separately consider the
cases $n=2,3$ and thus omit the superscript $(n)$ for the sake of readability.

\hktwo{ \textit{The case $n=2$:} From the energy $E_0$ it directly follows that
  the \ds{positivity set} of any minimizing sequence must be bounded and
  connected.  Indeed, a monotonicity argument shows that, otherwise, shifting
  the separate components together decreases the energy (see Lemma
  \ref{app-lem-H12-shifting}). Among functions with fixed positivity set, the
  energy is strictly convex and hence there exists a unique minimizer among this
  class of functions.  We first consider the case when the positivity set is
  given by $I := (-1,1)$.  } We get
\begin{align} \label{HAS-const} %
  \begin{split}
    0 \ %
    &= \ \pi\frac{d}{d\eta}\bigg|_{\eta=0} \Vert
    A+\eta\hktwo{\psi}\Vert^2_{\hhom} \ %
    = \int_\R |\tau|\widehat{A}(\tau) \widehat{\hktwo{\psi}}(\tau) \dx \tau \ %
    = \int_\R H(A')(t) \hktwo{\psi}(t) \ \dx t
  \end{split}
\end{align} 
for all $\hktwo{\psi}\in C_c^\infty(I)$ with
$\int_{I} \hktwo{\psi}(t)\dx t = 0$. Here, $H$ is the Hilbert transform given by
\begin{align}
  H(f)(s) \ =\   %
  \frac{1}{\pi} \ \mathrm{p.v.}\int_{\R} \frac{f(t)}{s-t} \dx t.
\end{align}
Identity \eqref{HAS-const} implies that $H(A') \hk{\equiv C_1}$ on $I$ \hk{for
  some constant $C_1 \in \R$} where $C_1$ is determined by the volume
constraint.  For fixed $I$ this uniquely determines $A$
\cite[eq.(45)]{Soehngen1939}, see also \cite[Thm. 1]{Rueland2019} \hkone{for the
  complementary case that $f$ and $H(f)$ are known on disjoint intervals}. We
claim that
\begin{align} \label{I-min} %
  A(t) \ %
  = \ \frac{2}{\pi}\sqrt{1 - t^2}\ \chi_{I}(t) \, .
\end{align}
Obviously, condition \eqref{A-unit} then holds. Furthermore,
$A'(t) = -\frac{2t}{\pi}\phi(t)$ where
$\phi(t) := \frac 1{\sqrt{1 - t^2}} \chi_{I}$. We recall that by
\cite[(4.111)]{King2009} we have
\begin{align}\label{lim-2d-eq-H}
  H(t \mapsto t \phi(t))(s) \ = \  - \frac{1}{\pi} \int_\R \phi(t)\dx t + s H(\phi)(s).
\end{align}
Since
$H\big(\phi\big)(s) \ = - (\sgn s) \frac 1{\sqrt{s^2-1}} \chi_{\R \BS I}(s)$
(cf. \cite[(21)]{Erdelyi1954}), \eqref{lim-2d-eq-H} implies
\begin{align*}
  H(A')(s) \
  &= \  \frac 2\pi \Big(\frac{1}{\pi}\int_{I} \frac{1}{\sqrt{1-t^2}} \dx t - \frac{s \sgn(s)}{\sqrt{s^2-1}}\chi_{\R \BS I}(s) \Big)  = \ \frac{2}{\pi} \Big( 1 - \frac{s \sgn(s)}{\sqrt{s^2-1}}\chi_{\R \BS I}(s) \Big),
\end{align*}
\hktwo{which confirms \eqref{I-min} for prescribed positivity set $I$}. By
translation invariance it is enough to consider intervals of the form
$I_{L_*^{(n)}} := (-L_*^{(n)},L_*^{(n)})$ for some $L_*^{(n)}>0$. With the same
calculation as before we see that the minimizer among the class of functions
with positivity set $I_{L_*}$ is given by $A_{L_*}(L_* t)=\frac{1}{L_*}A(t)$
where $A$ is given by \eqref{I-min}. We have
$\widehat{A_{L_*}}(\tau) = \frac{\sqrt{2}}{\sqrt{\pi}
  L_*|\tau|}J_1({L_*}|\tau|)$ \cite[p.11 (8)]{Erdelyi1954}, where $J_1$ is the
Bessel function of the first kind.  In particular,
$\NNN{A_{L_*}}{\hhom}^2 = \frac 2{\pi{L_*}^{2}}$
(cf. \cite[703]{Gradshteyn2000}) and hence
$E_0^{(2)}[A_{L_*}] = 4{L_*} - \frac{1}{\pi {L_*}^2}$.  Minimizing in ${L_*}$ we
obtain ${L_*} = (\frac{2}{\pi})^{\frac 13}$ and hence
$A_{L_*}(t) = (\frac{2}{\pi})^{\frac23}\sqrt{1 -
  \frac{t}{L_*}}\chi_I(\frac{t}{L_*})$.  Comparing this to \eqref{thm-limit-def-A-R}
we find $R_* = (\frac{1}{2\pi^2})^{1/3}$. \hk{The above calculation also shows
  that the interval length $L_*$ is uniquely determined by the minimization of
  the energy which --- together with the fact that we have uniqueness among
  functions with fixed \ds{positivity set} --- with the previous arguments shows
  that the problem admits a unique minimizer (up to translation). }

\medskip

\textit{The case $n=3$:} We first consider the minimization problem for
  symmetrically decreasing functions with support on some fixed interval
  $[-L_*,L_*]$. By the direct method of the calculus of variations there exists
  a minimizer $A$ in this class of functions. In terms of $R := R^{(3)}$, defined as in \eqref{thm-limit-def-A-R}, the
  Euler--Lagrange equation takes the form
   \begin{align}\label{lim-ELG-3D}
     \frac{R_*^3}{L_*^2} (R^2)''(t) \ = \ \frac {7}{R(t)} -
     2\bet_0 \qquad\qquad \text{in $[-1,0]$}
   \end{align}
   for some Lagrange multiplier $\bet_0 \hk{>} 0$ (the positivity follows from a
   simple rescaling argument) and $R_*>0$ satisfying $\pi R_*^2=A(0)$.  Multiplying \eqref{lim-ELG-3D} by
   $(R^2)'= 2RR'$, integrating and since $R' \geq 0$ in $[-1,0]$ we get
 \begin{align} \label{lim-ELEdrho} %
   R^2 (R')^2 \ %
   = \ \frac{L_*^2}{R_*^3} \Big( 7 R - \bet_0 R^2 + \frac{\alpha_0}{\beta_0}
   \Big) \qquad\qquad \text{for $t \in [-1,0]$ }
 \end{align}
 for some $\alpha_0 \in \R$ with
   $(RR')^2(-1) = \frac{L_*^2}{R_*^3}\frac{\alpha_0}{\beta_0}$. Since
   $R'(0) = 0$ and $R(0) = 1$ in \eqref{lim-ELEdrho} we get
   $\alpha_0 = \bet_0(\bet_0 - 7)$.  \DETAILS{
     \begin{align} \label{lim-ELErho} %
     \int_{-1}^t dt \ %
     = \ \frac{R_*^{3/2}}{L_*} \int_0^\rho \frac {R}{\sqrt{\bet_0 (1-R^2) - 7(1-R)}} \ dR%
     \qquad\qquad \text{for    $t \in [-1,0]$. }
     \end{align}}
   By explicit integration with
   $g(\rho) := 7\beta_0(\rho-1) -
   \beta_0^2(\rho^2-1)$ we get 
\begin{align}\label{lim-ELG-3D-impl-def-rho}
  t + 1 
   &=  \frac{R_*^{\frac 32}}{L_*\beta_0^{\frac 32}} \Big[\frac 7{2} \Big( \arctan \Big( \frac{7}{2\sqrt{\alpha_0}} \Big) -\arctan \Big( \frac{7 - 2\beta_0R}{2 \sqrt{g(R)}} \Big)\Big)  +  \left( \sqrt{\alpha_0} - \sqrt{g(R)}\right)\Big].
\end{align}
\DETAILS{ Im Knüpfer-Muratov paper kann man hier (wie in der alten (fehlerhaften) version, zb 6j) vom arctan zum arccos wechseln und das verinfacht die formel erheblich. 
in unserem Fall funktioniert das nicht so wirklich. in 6j klappt das weil $C_0=\alpha/\beta=0$, bei eurem paper sind die Konstanten etwas schöner (eine $1$ statt der $7$ und $\beta=1$)
\begin{align*}
\arctan\left( \frac{7 - 2\beta_0R}{2 \sqrt{\alpha_0 + 7\beta_0R - \beta_0^2R^2}} \right)
\ &= \
\arccos\left( \frac{2\sqrt{7\beta(r-1) - \beta^2(r^2-1)}}{4\beta^2-28\beta+49} \right) \\
\ &= \
\arccos\left( \frac{2\sqrt{\beta(r-1)(7 - \beta(r+1))}}{4\beta^2-28\beta+49} \right)
\end{align*}
Another difference with KM: \cite{KnuepferMuratov-2011} is stated a priori on a
bounded interval $(0,h)$ i.e. no problems with existence of minimizers, (we use
sequence $L_k$) since no volume constraint ($\beta=1$) no vanishing mass at
infinity } It remains to determine
the parameters $\beta_0, R_*, L_*$.  Setting $t=0$ in
\eqref{lim-ELG-3D-impl-def-rho} and since $R(0)=1$ we obtain
\begin{align} \label{lim-eq-abcd-2} L_* \ &= \
  \Big(\frac{R_*}{\beta_0}\Big)^{\frac 32} \Big[ \frac 72 \big( \arctan \big(
  \tfrac{7}{2\sqrt{\alp_0}} \big) - \tfrac {\pi}{2} \big) + \sqrt{\alp_0} \,
  \Big].
\end{align}
The condition $\pi R_*^2 L_* \int_\R R^2\dx t=1$ reads
\begin{align} \label{lim-eq-abcd-3}
  R_*^{-\frac 72}
  &= \ 2\pi \int_0^{1} \frac {R^3}{\sqrt{\frac{\alpha_0}{\beta_0} + 7R - \beta_0 R^2}} \ dR \nonumber \\
  \ &= \ \frac \pi{24\beta_0^{\frac72} } \Big(-1470 \sqrt{\alp_0} + 32 \alp_0^{3/2}
      + (252 \alp_0 + 5145)  \big( \arctan \big( \tfrac{7}{2\sqrt{\alp_0}}\big) -\tfrac \pi 2  \big) \Big).
\end{align}
Since the minimizer is also optimal with respect to volume preserving rescalings
of the form $R_\eps = \frac 1{\eps^{\frac 12}} R(\frac t\eps)$ we get the
additional condition
$0 = \pi R_* L_* \int R \dx t - \frac {6\pi}{7} \frac{R_*^4}{L_*} \int R^2
(R')^2 \dx t$.  Integrating \eqref{lim-ELEdrho} we can replace
$\int R^2 (R')^2 \dx t$ and an explicit integration of $\int R \dx t$ yields
\begin{align}\label{lim-eq-abcd-4}
  0 \ = \  
 R_*^\frac{7}{2}\left(
 -\frac {105}{2}\sqrt{\alpha_0}  + 5 \left(\alpha_0 + \tfrac{147}{4}\right)\Big(\tfrac {\pi}{2} - \arctan(\tfrac {7}{2 \sqrt{\alpha_0} }) \Big) \right)
  - \frac{12}{7}R_*^2 L_* \alpha_0 \beta_0^{3/2}  + \frac{6}{7\pi }\beta_0^{7/2}.
\end{align}
Inserting \eqref{lim-eq-abcd-2} and \eqref{lim-eq-abcd-3} into \eqref{lim-eq-abcd-4} we then obtain the unique solution $\alpha_0 \approx 104.332$ for $\alp_0$. Substituting this into \eqref{lim-eq-abcd-2} and \eqref{lim-eq-abcd-3}
we obtain $\beta_0 \approx 14.297$, $R_* \approx 1.511$ and $L_* \approx 0.202$. 
From \eqref{lim-ELEdrho} it follows that $R' \neq 0$ for
  $t \in [-1,1] \BS \{0\}$.

\DETAILS{
from optimality $L_*$
\begin{align*}
0 = \pi R_* L_* \int R - \frac{6\pi}{7} \frac{R_*^4}{L_*} \int R^2(R')^2
\end{align*}

from ELG
\begin{align*}
\int R^2 (R')^2 
= \frac{L^2}{R_*^3}\left( 7\int R - \beta\int R^2 + \int \frac{\alpha}{\beta} \right)
\end{align*}
\begin{align*}
\int \frac{\alpha}{\beta} 
&= 2 \frac{\alpha}{\beta}  \\
\beta\int R^2
&= \frac{\beta}{\pi R_*^2 L_*} \\
\int R
&= 2 \frac{R^\frac{3}{2}}{L}\int R^2/\sqrt{...} dR = 2\frac{R^{3/2}}{L}\left( \frac{4\alpha + 147}{8 \beta^{5/2}}(-\frac\pi 2 + \arctan(7/(2\sqrt{\alpha}))) + \frac{21 \sqrt{\alpha/\beta}}{4\beta^2} \right)
\end{align*}
Hence
\begin{align*}
0
&= (\pi R_* L_* - \frac{6\pi}{7}\frac{R_*^4}{L_*}\frac{L_*^2}{R_*^3}7)\int R 
+ \frac{6\pi}{7}\frac{R_*^4}{L_*}\frac{L_*^2}{R_*^3}\frac{\beta}{\pi R_*^2 L_*}
- \frac{6\pi}{7}\frac{R_*^4}{L_*}\frac{L_*^2}{R_*^3} 2 \frac{\alpha}{\beta} \\
&= -10\pi R_*^{5/2}\left( \frac{4\alpha + 147}{8 \beta^{5/2}}(-\frac\pi 2 + \arctan(7/(2\sqrt{\alpha}))) + \frac{21 \sqrt{\alpha/\beta}}{4\beta^2} \right)
+ \frac{6}{7}\frac{\beta}{R_*}
- \frac{12\pi}{7}R_* L_*\frac{\alpha}{\beta} \\
\end{align*}
}
 
\DETAILS{Multiplying \eqref{lim-ELG-3D} by $2 A'$, integrating and using that
  $\hk{\rho'} \geq 0$ in $[-L_*,0]$ we get
   \begin{align}
     (A')^2 \ = \  4 \pi^2 C_0 + 28\pi^\frac32 \sqrt{A} -4\pi \lam_0 A  \qquad %
     \text{for $t \in [-L_*,0]$. }
   \end{align}
   with $\lam_0 = \frac{\lam}{2\pi}$.  We introduce the radius by
   $A = \pi \rho^2$, i.e. $A' = 2\pi \rho \rho'$. Then
   \begin{align}
     E_0^{(3)}[\rho] \ %
     &= \ 2 \pi \int_\R \ \rho(t)  \dx t \ +\  \frac{2\pi}{7}\  \int_{\R} \rho'(t)^2 \rho^2(t) \dx t =: P + N, \label{E0-3rho}
   \end{align}
   and the ELE is
   $4\pi^2 \rho^2 (\rho')^2 = 4\pi^2 C_0 + 28\pi^2 \rho - 4\pi^2\lam_0 \rho^2$, i.e. 
   \begin{align} \label{ELErho} %
     \rho^2 (\rho')^2 = C_0 + 7 \rho - \lam_0 \rho^2. 
   \end{align}
   Since $\rho'(0) = 0$ and $\rho(0) = R_*$ we obtain
   \begin{align} \label{eq-co} %
     C_0 = \lam_0 R_*^2 - 7 R_*. 
   \end{align}
   In particular, $0 \leq C_0 < \infty$. We have $2$ remaining unknowns $L_*$
   and $\lam$.  By explicit integration this implies (wolfram alpha)
   \begin{align}
     \hspace{2ex} & \hspace{-2ex} %
                    t + L_* \ = \ \int_{-L_*}^{t} \ dt \ %
                    = \ \int_0^R \frac \rho {\sqrt{C_0 + 7 \rho -\lam_0 \rho^2 }} \ d\rho \\ %
     &= \ \Big[ - \frac 7{2\lam_0^{3/2}} \arctan \Big( \frac{7 - 2\lam_0\rho}{2\sqrt{\lam_0} \sqrt{C_0 + 7\rho-\lam_0\rho^2}} \Big) - \frac 1{\lam_0} \sqrt{C_0 + 7 \rho-\lam_0\rho^2}  \Big]_0^R \\
                  &= \  \frac 7{2\lam_0^{3/2}} \arctan \Big( \frac{7}{\sqrt{4\lam_0C_0}} \Big) + \frac{\sqrt{C_0}}{\lam_0} \\
                  &\qquad - \frac 7{2\lam_0^{3/2}} \arctan \Big( \frac{7 - 2\lam_0\rho}{2\sqrt{\lam_0} \sqrt{C_0 + 7\rho - \lam_0\rho^2}} \Big) - \frac 1{\lam_0} \sqrt{C_0 + 7\rho - \lam_0\rho^2}   %
   \end{align}
   If we set $t = 0$ (and hence $\rho = R_*$) above we get
   \begin{align}
     L_* \ %
     &= \ \frac 7{2\lam_0^{3/2}} \arctan \Big( \frac{7}{\sqrt{4\lam_0C_0}} \Big) + \frac{\sqrt{C_0}}{\lam_0}  - \frac {7\pi}{4\lam_0^{3/2}}.  %
   \end{align}
   The remaining $2$ unknowns $L_*$ and $\lam_0$ are determined by
   $\pi \int \rho^2 = 1$, and the optimality of the interval length.  The volume
   condition yields
   \begin{align} \notag \frac 1{2\pi} \ %
     &= \int_{-L_*}^0 \rho^2 \ %
       = \ \int_0^{R_*} \rho^2 \frac{dt}{d\rho} d\rho \ %
       = \ \int_0^{R_*} \frac {\rho^3}{\sqrt{C_0 + 7\rho - \lam_0 \rho^2}} \ d\rho \\
     &= \ \Big[\frac{8 C_0 \lam_0^2 \rho^2 - 182 C_0 \lam_0 \rho - 735 C_0 + 8 \lam_0^3 \rho^4 + 14 \lam_0^2 \rho^3 + 245 \lam_0 \rho^2 - 5145 \rho - 16 \lam_0 C_0^2 }{24 \lam_0^3\sqrt{C_0 + 7\rho - \lam_0 \rho^2} } \notag \\
     &\qquad - \frac {84 \lam_0 C_0 + 1715}{16 \lam_0^{7/2}} \arctan \Big( \frac{7-2 \lam_0 \rho}{2\sqrt{\lam_0}\sqrt{C_0 + 7\rho - \lam_0 \rho^2}}\Big)\Big]_0^{R_*} \notag \\
     &= \ \frac{735 C_0 + 16 \lam_0 C_0^2 }{24 \lam_0^3\sqrt{C_0} }  + \frac {84 \lam_0 C_0 + 1715}{16 \lam_0^{7/2}} \Big( -\frac \pi 2 + \arctan \Big( \frac{7}{2\sqrt{\lam_0 C_0 }} \Big) \Big). \notag \\
     &+ \ \frac{8 C_0 \lam_0^2 R_*^2 - 182 C_0 \lam_0 R_* - 735 C_0 + 8 \lam_0^3
       R_*^4 + 14 \lam_0^2 R_*^3 + 245 \lam_0 R_*^2 - 5145 R_* - 16 \lam_0 C_0^2
       }{24 \lam_0^3\sqrt{C_0 + 7R_* - \lam_0 R_*^2} }. \notag \\
  &= \frac{735 C_0 + 16 \lam_0 C_0^2 }{24 \lam_0^3\sqrt{C_0} }  + \frac {84 \lam_0 C_0 + 1715}{16 \lam_0^{7/2}} \Big( -\frac \pi 2 + \arctan \Big( \frac{7}{2\sqrt{\lam_0 C_0 }} \Big) \Big) . \notag \\
     &+  \frac{-\sqrt{C_0 + 7R_* - \lam_0 R_*^2}(16C_0\lambda_0 + 8\lambda_0^2R_*^2 + 70\lambda_0 R_* + 735)}{24 \lam_0^3 }  
   \nonumber \\
 &= \frac{735 C_0 + 16 \lam_0 C_0^2 }{24 \lam_0^3\sqrt{C_0} }  + \frac {84 \lam_0 C_0 + 1715}{16 \lam_0^{7/2}} \Big( -\frac \pi 2 + \arctan \Big( \frac{7}{2\sqrt{\lam_0 C_0 }} \Big) \Big) . \notag  
   \end{align}
   by \eqref{eq-co} for $C_0$.  We consider
   $A_\eps = \frac 1\eps A(\frac x\eps)$, resp.
   $\rho_\eps = \frac 1{\eps^{\frac 12}} \rho(\frac x\eps)$. For the minimizer
   we have
   \begin{align} \label{intopt} %
     0 \ &= \ \frac{d}{d\eps} E(A_\eps)\bigg|_{\eps = 1} \ %
 \ = \ \frac{d}{d\eps} E(A_\eps)\bigg|_{\eps = 1} 2\pi\sqrt{\eps}\int\rho + \frac{2\pi}{7}\epsilon^{-3}\int (\rho\rho')^2 \\       
   &= \ \pi \int \rho \ dx - \frac {6\pi}{7} \int \rho^2 (\rho')^2
           \ dx \ %
           =: \ \pi^{3/2} P - \frac {3\pi}{14} N.
   \end{align}
   Integrating the ELE we get $N = 2C_0 L_* + 7 P - \frac{\lam_0}{\pi}$. Using
   this to eliminate $N$ we have
   \begin{align}
     0 \ = \ \pi P -  \frac{12\pi}{6} C_0 L_* - 6\pi P +\frac{6\pi}{7}\lambda_0 .
   \end{align}
   We can calculate $P$ explicitly for the solution, i.e.
   \begin{align*}
    \frac12 P := \int_{-L_*}^0 \rho dt  \
     &= \ \int_0^{R_*} \rho\frac{dt}{d\rho} d\rho \ %
       = \ \int_0^{R_*} \frac {\rho^2}{\sqrt{C_0 + 7\rho - \lam_0 \rho^2}} \ d\rho \\
     & = \ \Big[ -\frac {2 \lam_0 \rho + 21}{4 \lam_0^2}  \sqrt{C_0 + 7 \rho - \lam_0 \rho^2} %
       - \frac {4 \lam_0 C_0 + 147}{8\lam_0^{5/2}}  \arctan(\frac {7-2\lam_0\rho}{2 \sqrt{\lam_0} \sqrt{C_0 + 7 \rho - \lam_0 \rho^2} })\Big]^{R_*}_0 \\
     & = \ \frac {21}{4 \lam_0^2}  \sqrt{C_0} %
       + \frac {4 \lam_0 C_0 + 147}{8\lam_0^{5/2}}  \Big(\arctan(\frac {7}{2 \sqrt{\lam_0 C_0} }) %
       -  \frac {\pi}{2} \Big).
   \end{align*}

   For the
   determination of the parameters $C_0$, $\lam_0$, $R_*$, $L_*$ we have
   \begin{align*} %
     C_0 &= \lam_0 R_*^2 - 7 R_*. \\
     L_* \ %
         &= \ \frac 7{2\lam_0^{3/2}} \arctan \Big( \frac{7}{2\sqrt{\lam_0C_0}}
           \Big) + \frac{\sqrt{C_0}}{\lam_0} - \frac {7\pi}{4\lam_0^{3/2}}, \\ %
    \frac{1}{2\pi} \ %
         &= \ \frac{735 C_0 + 16 \lam_0 C_0^2 }{24 \lam_0^3\sqrt{C_0} }  + \frac {84 \lam_0 C_0 + 1715}{16 \lam_0^{7/2}} \Big( -\frac \pi 2 + \arctan \Big( \frac{7}{2\sqrt{\lam_0}\sqrt{C_0} }\Big) \Big). \notag \\
     0 \ &= \ -5\pi P  -  \frac{12\pi}{7}C_0 L_* + \frac{6\pi}{7}\lambda_0 \, .
   \end{align*}
   where $P$ is given above. We can rewrite the system for
   $C_0,\lambda_0,R_*,L_*$ in terms of
   $\alpha_0,\beta_0,\gamma_0 := L_* \lambda_0^\frac32, \d_0 :=
   \lambda_0^{\frac72}$,
   \begin{align*} %
     \alp_0 &= \bet_0^2 - 7 \bet_0. \\
     \gam_0 \ %
            &= \  \frac 72 \Big( \arctan \Big( \frac{7}{2\sqrt{\alp_0}}
              \Big) - \frac {\pi}{2} \Big) + \sqrt{\alp_0} , \\ %
     \frac{\d_0}{2\pi} \ %
            &= \  \frac{735 \alp_0 + 16 \alp_0^2 }{24\sqrt{\alp_0} }  + \frac {84 \alp_0 + 1715}{16} \Big( \arctan \Big( \frac{7}{2\sqrt{\alp_0}}\Big) -\frac \pi 2  \Big) \notag \\
     0 \ &= \  -5\pi\left( \frac {21}{2}\sqrt{\alpha_0} %
           + (\alpha_0 + \frac{147}{4})\Big(\arctan(\frac {7}{2 \sqrt{\alpha_0} }) -  \frac {\pi}{2} \Big) \right) - \frac{12\pi}{7} \alpha_0 \gam_0 + \frac{6\pi}{7}\d_0 
   \end{align*}
   If we insert the second and third equation into the fourth equation we get an
   equation for $\alp_0$, i.e. with $K_0 := \arctan(\frac {7}{2 \sqrt{\alpha_0} }) -  \frac {\pi}{2}$ we have
   \begin{align*} 
     0 \ &= \  -5 \left( \frac {21}{2}\sqrt{\alpha_0} %
           + (\alpha_0 + \frac{147}{4}) K_0 \right) -  6 \alpha_0 K_0 - \frac{12}{7} \alpha_0^{3/2}) \\
         &\qquad + \frac{3\pi}{7}\Big[ \frac{735}6 \sqrt{\alp_0} + \frac 83 \alp_0^{3/2}  + 21 \alp_0 K_0 + \frac{1715}{4} K_0 \Big]
   \end{align*}
   \dsc{Als Lösungen für $\alpha_0$ erhalte ich $0$ und $\approx 18.87999467$ Der
     Plot legt nahe dass das die einzigen Lösungen sind.

Für $\alpha_0=0$ ist natürlich $R=0=L$ und daher falsch.
Für die Andere Lösung erhalte ich
\begin{align*}
alpha &=  18.87999467 \\
beta  &=  9.07942602 \\
gamma &=  1.220630256094004 \\
delta &=  22.537603973793434
\end{align*}
und somit
\begin{align*}
lambda &=  2.435258509158324 \\
C_0 &=  7.752768175944211 \\
R_* &=  3.72832124 \\
L_* &=  0.3211933362545995
\end{align*}
}
} 

\medskip

In view of \cite[Chapt. 3]{Lieb2001} and the Pólya-Szeg\H{o} inequality
\cite[Theorem 1.1]{Brothers1988}, see also \cite[Ch. III]{Polya1951}), symmetric
rearrangement does not increase the energy. In fact, since our minimizer
satisfies $|\{x\sd A'(x)=0\}\cap\supp(A)|=0$, any configuration whose symmetric
rearrangement is given by this minimizer has strictly larger energy
\cite[Thm. 1]{Ferone2004}. \hk{ Hence, our arguments show that within the class
  of functions with bounded support, the unique minimizer, up to translation, is
  given by $A$. Since the energy of any function $u \in \AA$ can be approximated
  by the energy of functions with bounded support, this shows that $A$ is a
  minimizer of the energy within the class of functions $\AA$ and that this
  minimizer is unique, up to translation, within the class of functions with
  bounded support. Furthermore, any minimizer satisfies the Euler-Lagrange
  equation. In turn a simple calculation shows that solutions of the
  Euler-Lagrange equation with finite mass necessarily have compact
  support. Indeed, this follows since the ordinary differential inequality
  $A' \leq - C \sqrt[4]{A}$ does not allow for solutions with finite mass and
  unbounded support as a straightforward calculation shows.  }
 
\appendix

\section{} 

\hk{In the appendix we give the proof of Theorem \ref{thm-scaling} \dsk{ and Theorem \ref{thm-compactness}} for $n = 4$ \dsk{as well as} some auxiliary estimate for the $H^{\frac 12}$--norm used in the
  Proof of Theorem \ref{thm-limit}. } 
\begin{lemma}[Compactness for $n = 4$] \label{lem-compactness} %
  Let $n = 4$. Then for any sequence $u_\eps\in \AA$ with
  $\spt u_\eps \subset B_\rho(0)$ for some $\rho>0$,
  $E_\eps^{(n)}[u_\eps] \lesssim 1$ and $0\leq s < \frac{1}{22}$, there is a
  subsequence (not relabelled) and a function $A\in\AA_0^{(n)}$ such that
  \begin{align} \label{top-conv-nd} %
    \A[u_\eps] \ \wto \ A \quad\text{ in } H^s(\R) \quad \text{as $\eps \to 0$.}
  \end{align}
\end{lemma}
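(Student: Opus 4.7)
The plan is to mirror the structure of Lemma \ref{lem-com1} and Proposition \ref{prp-com23}, adapted to $n = 4$. The key new feature is that $\gam_4(\eps) = 1$ is no longer a small parameter, and the $\xi'$--integral in Lemma \ref{lem-intform} now falls into the asymptotic regime \eqref{calc_inner_int_expansion} already for $k=0$: the integral behaves like $\ome_2\xi_1^2\bet$ in the large-$q$ regime (rather than producing an arctangent or a logarithm), which forces the power $|\widehat A_\eps|^{5/2}$ (instead of $|\widehat A_\eps|^2$) into the resulting energy inequality.

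\textbf{Step 1 (analog of Lemma \ref{lem-com1}(ii)).} With $q_{\eps,\eta}(\xi_1) = \eta|\widehat A_\eps|^{1/2}/(\eps|\xi_1|)$, set $S_{\eps,\eta}=\{q_{\eps,\eta}\geq \eps^{\eta/2-1}\}$. Applying Lemma \ref{lem-N-low} together with \eqref{calc_inner_int_expansion} of Lemma \ref{lem-intform} with $\bet=\eta|\widehat A_\eps|^{1/2}$, on $S_{\eps,\eta}$ one obtains
\[
   \int_{S_{\eps,\eta}} |\xi_1|^2 |\widehat A_\eps(\xi_1)|^{5/2}\,d\xi_1 \ \lesssim\ N_\eps^{(4)}[u_\eps]\ \lesssim\ 1.
\]
Separately, Taylor expanding $\arctan$ near $0$ in the exact formula $4\pi\xi_1^2[\bet - \eps|\xi_1|\arctan(\bet/(\eps|\xi_1|))]$ for the inner integral, one finds on $T_{\eps,\eta}=\{q_{\eps,\eta}\leq 1\}$ that
\[
   \int_{T_{\eps,\eta}} |\widehat A_\eps|^{7/2}\,d\xi_1 \ \lesssim\ \frac{\eps^2}{\eta^3}\,N_\eps^{(4)}[u_\eps]\ \lesssim\ \frac{\eps^2}{\eta^3}\, .
\]

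\textbf{Step 2 ($H^s$ bound on $\widehat A_\eps$).} Decompose $\R = S_{\eps,\eta}\cup S_{\eps,\eta}^c$. On $S_{\eps,\eta}$, split off $\{|\xi_1|\leq 1\}$ (where $|\widehat A_\eps|\leq\|A_\eps\|_{L^1}/\sqrt{2\pi}\lesssim 1$ and $|\xi_1|^{2s}\leq 1$ suffice) and apply Hölder's inequality with exponents $5/4$ and $5$:
\[
   \int_{S_{\eps,\eta}\cap\{|\xi_1|>1\}}\!\! |\xi_1|^{2s}|\widehat A_\eps|^2\, d\xi_1
   \ \leq\ \Bigl(\int|\xi_1|^2|\widehat A_\eps|^{5/2}d\xi_1\Bigr)^{4/5}\!\Bigl(\int_{|\xi_1|>1}\!\!|\xi_1|^{10s-8}\, d\xi_1\Bigr)^{1/5},
\]
finite uniformly in $\eps$ for any $s<7/10$. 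On $S_{\eps,\eta}^c$ one has the three pointwise bounds $|\widehat A_\eps(\xi_1)|\leq\min\{C_\rho,\,\eta^{-2}\eps^{\eta}\xi_1^2,\,(\eps|\xi_1|)^{-1}\,P_\eps^{(4)}[u_\eps]\}$ together with the integral control from Step 1. A dyadic decomposition $S_{\eps,\eta}^c=\bigcup_k I_k$ analogous to the one used for $n=3$ in Lemma \ref{lem-com1}(i), with exponents $\alp_k$ chosen to balance these bounds, yields
\[
   \int_{S_{\eps,\eta}^c}|\xi_1|^{2s}|\widehat A_\eps|^2\,d\xi_1 \ \lesssim_{\rho,s}\ 1
\]
for every $s\in[0,1/22)$ and $\eta=\eta(s)$ sufficiently small. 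Combined with the previous display, $\widehat A_\eps$ is uniformly bounded in $H^s(\R)$ for all such $s$.

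\textbf{Step 3 (passage to the limit).} By Banach--Alaoglu, a subsequence satisfies $A_\eps\wto A$ in $H^s(\R)$ for every $s\in[0,1/22)$. Following the proof of Proposition \ref{prp-com23}, the truncated sequence $(\widehat A_\eps\chi_{S_{\eps,\eta}})^\vee$ is uniformly bounded in $H^{s'}(\R)$ for every $s'<7/10$ (by Step 1 and the Hölder interpolation of Step 2 restricted to $S_{\eps,\eta}$), hence by a further diagonal extraction converges weakly in each such space to some $B\in\bigcap_{s'<7/10}H^{s'}(\R)$. Step 1 also gives $\int_{S_{\eps,\eta}^c}|\widehat A_\eps|^{7/2}\to 0$; by the Paley--Wiener theorem this forces $\chi_{S_{\eps,\eta}^c}\to 0$ a.e., so that Lemma \ref{lem-weak_pointwise_conv} identifies $B=\widehat A$. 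Hence $A\in\bigcap_{s'<7/10}H^{s'}(\R)$, and the uniform support condition together with $\int A=1$ gives $A\in\AA_0^{(4)}$, proving \eqref{top-conv-nd}.

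\textbf{Main obstacle.} Step 2 is the delicate one. Since $\gam_4(\eps)=1$, the complement $S_{\eps,\eta}^c$ cannot be discarded at the cost of a vanishing prefactor (as was done, implicitly, in the $n=2,3$ cases), so it must yield a genuine $H^s$ estimate of its own. Three pointwise bounds on $|\widehat A_\eps|$ together with the $L^{7/2}$-control from Step 1 must be combined through a careful iterative dyadic argument, and the threshold $s<1/22$ is exactly what this interpolation allows --- strictly smaller than the limiting regularity $s<7/10$ recovered in Step 3, reflecting that the sharp estimate of Step 1 is available only after Fourier truncation.
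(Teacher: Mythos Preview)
Your proposal is essentially correct and shares the paper's core ingredients: the two integral estimates in Step~1 (the bound $\int\xi_1^2|\widehat A_\eps|^{5/2}\lesssim 1$ on the large-$q$ region and the bound $\int|\widehat A_\eps|^{7/2}\lesssim\eps^2$ on $\{q_{\eps,\eta}\leq 1\}$), the H\"older interpolation with exponents $5/4$ and $5$ on the good set in Step~2, and the weak-compactness and identification argument of Step~3 all match the paper.

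The one genuine divergence is your treatment of the complement in Step~2. By taking $S_{\eps,\eta}=\{q_{\eps,\eta}\geq\eps^{\eta/2-1}\}$ as the good set (carried over from the $n=2,3$ analysis), you create an intermediate region $\{1\leq q_{\eps,\eta}<\eps^{\eta/2-1}\}$ on which neither your $L^{7/2}$ bound (stated only on $T_{\eps,\eta}=\{q_{\eps,\eta}\leq 1\}$) nor, as you state it, the $\xi_1^2|\widehat A_\eps|^{5/2}$ bound applies; you then propose to cover $S_{\eps,\eta}^c$ by an iterative dyadic decomposition whose details you omit. The paper avoids all of this. Since $\gamma_4(\eps)=1$, the asymptotic \eqref{calc_inner_int_expansion} already yields the lower bound $\gtrsim\xi_1^2\eta|\widehat A_\eps|^{1/2}$ on the whole of $\{q_{\eps,\eta}\geq 1\}$, so the paper simply decomposes $\R$ into $\{q_{\eps,\eta}\geq 1\}$ and $\{q_{\eps,\eta}\leq 1\}$ (as it remarks, one may take $S_\eps=R_\eps$ here). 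On $\{q_{\eps,\eta}\leq 1\}$ no dyadic iteration is needed: a single cut at $|\xi_1|=\eps^{-\gamma}$ suffices, with the perimeter bound $|\widehat A_\eps|\lesssim(\eps|\xi_1|)^{-1}$ handling $|\xi_1|>\eps^{-\gamma}$ (which forces $\gamma\geq 2/(1-2s)$) and H\"older against the $L^{7/2}$ estimate handling $|\xi_1|\leq\eps^{-\gamma}$ (which forces $\gamma\leq 8/(14s+3)$). Compatibility of these two constraints on $\gamma$ is precisely $s<1/22$. Your dyadic route is plausible and would presumably reach the same threshold, but the paper's two-piece split is shorter and makes the origin of the constant $1/22$ transparent.
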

Although the proof is only concerned with $n=4$, we will carry out steps in full
generality, i.e.\ $n\geq 4$ whenever possible since this gives a hint for
  the structure of the estimates and the loss of regularity for $n \geq 4$.

\begin{proof}
  Using estimate \eqref{calc_inner_int_expansion} of Lemma \ref{lem-intform} for $k = 0$
    and $\beta = \eta|\widehat{A_\eps}(\xi_1)|^\frac12$ we get for $n\geq 4$
  and
  ${q_{\eps,\eta}}(\xi_1) := \frac{\eta|\widehat{A_\eps}|^\frac12}{\eps|\xi_1|} \geq 1$
  \begin{align} \label{calc_inner_int_ngeq4} %
  \gam_n(\eps) \int_{|\xi'|\leq \eta|\widehat{A_\eps}(\xi_1)|^\frac12}
    \frac{\xi_1^2}{\eps^2\xi_1^2 + |\xi'|^2} \dx\xi' %
    = 
   \frac{\ome_{n-2}}{n-3} \xi_1^{2} \eta^{n-3}  |\hat{A_\eps}(\xi_1)|^{\frac{n-3}{2}} + R\, ,
  \end{align}
  where $R=\eps \, O(\eta^{n-4}|\xi_1|^3 |\hat{A_\eps}(\xi_1)|^{\frac{n-4}{2}})$.  This
  implies that, by Lemma \ref{lem-N-low}
  \begin{align}\label{N_ngeq4}
    N_\eps^{(n)}
    \ &\geq \ds{C_\eta} \ \int_{\R\cap \{{q_{\eps,\eta}}\geq 1\}} \xi_1^2 |\hat A_\eps(\xi_1)|^{\frac{n+1}{2}}\dx\xi_1.
  \end{align}
  for $\eps$ small enough. Our goal is to bound some $H^s-$norm of
  $A_\epsilon$. Setting $p = \frac{n+1}{4}$ and $q=\frac{n+1}{n-3}$ we have
  $p^{-1}+q^{-1}=1$ and for $\alpha>\frac{n-3}{n+1}$ it follows by Hölder's
  inequality
  \begin{align} \label{proof-cpt-N_geq4} %
    \hspace{6ex} & \hspace{-6ex} %
                   \int_{(-1,1)^c\cap \{{q_{\eps,\eta}}\geq 1\}} \xi_1^{2s} |\hat A_\epsilon(\xi_1)|^2 \dx \xi_1 \\
    &\leq \Big( \int_{(-1,1)^c\cap \{{q_{\eps,\eta}}\geq 1\}} \xi_1^{(2s+\alpha)p} |\hat A_\epsilon(\xi_1)|^{2p}
    \dx\xi_1 \Big)^{\frac1p} %
    \Big( \int_{(-1,1)^c} \xi_1^{-\alpha q} \dx\xi_1 \Big)^{\frac 1q}. \nonumber
  \end{align}
  By our choice of $p$ we have $2p = \frac{n+1}{2}$ and since
  $\alpha>\frac{n-3}{n+1}$ it holds that $\alpha q >1$, so the second integral
  converges.  To apply our estimate \eqref{N_ngeq4} we require
  $(2s+\alpha)p\leq 2$, which is equivalent to
  $2s < \frac{8}{n+1} - \frac{n-3}{n+1} = \frac{11 - n}{n+1}$.  This yields
  compactness of $(\hat{A_\eps}\chi_{\{{q_{\eps,\eta}}\geq 1\}})^\vee$ in $H^s$
  for some $s>0$ in all dimensions $4\leq n\leq 10$. For the complementary
  estimate, i.e.\ if ${q_{\eps,\eta}}(\xi_1)\leq 1$, we proceed as in the proof
  of Lemma \ref{lem-com1} for $n=2,3$ to find that
  $\int_{\eps^{-\gam}}^\infty |\xi_1|^{2s}|\widehat{A_\eps}|\dx\xi_1$ is
  bounded, provided $\gam\geq \frac{2}{1-2s}$.  Furthermore, since
  ${q_{\eps,\eta}}(\xi_1)\leq 1$, we have the lower bound
\begin{align*}
\gam_n(\eps) \int_{|\xi'|\leq \eta|\widehat{A_\eps}(\xi_1)|^\frac12} \frac{\xi_1^2}{\eps^2\xi_1^2 + |\xi'|^2} \dx\xi' %
\ \gtrsim \ \eps^{n-3}|\xi_1|^{n-1} \left( \frac{\eta|\widehat{A_\eps}|^\frac{1}{2}}{\eps|\xi_1|} \right)^{n-1}\, ,
\end{align*} 
from which it follows that 
\begin{align*}
\int_{\R\cap \{{q_{\eps,\eta}}\leq 1\}} |\widehat{A_\eps}|^\frac{n+3}{2} \dx\xi_1 \ \leq \ C_\eta \eps^2\, .
\end{align*}
We can estimate by Hölder's inequality
 \begin{align} \label{est-R_eps_c-nd}
 \NNN{\big(\widehat{A_\eps}\chi_{[0,\eps^{-\gam})\cap \{{q_{\eps,\eta}}\leq 1\}}\big)^\vee}{\dot H^{s}}^2 \ %
    &\lesssim \ \big(\sup_{\xi_1\in [0,\epsilon^{-\gamma})} |\xi_1|^{2s}\big) \NNN{\widehat{A_\eps}\chi_{\{{q_{\eps,\eta}}\geq 1\}}}{L^{\frac{n+3}{2}}}^2 \eps^{-\frac{n-1}{n+3}\gam} \ \\
    &\lesssim \ \eps^{\frac{8}{n+3} - \gam(\frac{n-1}{n+3} + 2s)}\ \lesssim \ 1\ ,
\end{align}
for $\gam\leq \frac{8}{2s(n+3)+n-1}$.  Both inequalities for $\gam$ yield
together that $s<\frac{5-n}{2(n+7)}$, in particular $s<\frac{1}{22}$ for $n=4$.
Note that this estimate does not provide us with control over an $H^s$ norm for
the part ${q_{\eps,\eta}}\geq 1$ with positive $s$ for dimensions $n\geq 5$. The
better regularity for the limit function $A\in\AA_0^{(n)}$ follows as in
Proposition \ref{prp-com23} by using \eqref{calc_inner_int_ngeq4} and Lemma
\ref{lem-weak_pointwise_conv}.  Since $\gamma_n(\eps)=1$ for $n\geq 4$, we can
take $S_\eps=R_\eps$ in the notation of Lemma \ref{lem-com1}.
\end{proof}

\begin{lemma} \label{lem-app-scaling}
The lower bound in Theorem \ref{thm-scaling} holds for $n=4$, the upper bound holds for all $n\geq 4$.
\end{lemma}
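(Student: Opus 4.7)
My plan is to treat the upper and lower bound separately, since they require rather different techniques.

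\textbf{Upper bound for all $n\geq 4$.} I would test the rescaled energy against the characteristic function of a fixed ellipsoid $u_\eps := \chi_E$, where $E = \{x\in\R^n : x_1^2/L^2 + |x'|^2/R^2 \leq 1\}$ and $R, L > 0$ are chosen so that $|E|=1$, independently of $\eps$. The anisotropic perimeter $P_\eps^{(n)}[u_\eps]$ is then bounded from above by the classical perimeter of $E$, a constant depending only on $n$. For the nonlocal term, I would use the explicit Fourier representation of $\chi_E$ from Lemma \ref{lem-app-ft} together with the pointwise decay \eqref{app-prop-ft-n-decay}. Proceeding exactly as in Proposition \ref{prp-upper}, I would split the frequency domain into $\DD = \{|\xi'| \geq 1+\xi_1^2\}$ and $\DD^c$, and apply Lemma \ref{lem-intform}: since $\gamma_n(\eps)\equiv 1$ in this regime, each piece produces an $O(1)$ contribution, giving $N_\eps^{(n)}[u_\eps] \lesssim 1$.

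\textbf{Lower bound for $n=4$.} I would reuse the two core estimates already derived in the proof of Lemma \ref{lem-compactness}. Setting $A := \A[u]$, the slicewise isoperimetric inequality in $\R^3$ yields
\begin{align*}
P_\eps^{(4)}[u] \ \geq \ \int_{\R^4} |\nabla' u|\,dx \ \gtrsim \ \int_\R A(x_1)^{2/3}\,dx_1,
\end{align*}
while Lemma \ref{lem-N-low} combined with the expansion \eqref{calc_inner_int_ngeq4} gives, for $\eta$ sufficiently small,
\begin{align*}
N_\eps^{(4)}[u] \ \gtrsim \ C_\eta \int_{\{q_{\eps,\eta}\geq 1\}} \xi_1^2 \, |\widehat A(\xi_1)|^{5/2}\,d\xi_1.
\end{align*}
The next step is to incorporate the mass constraint $\int A = 1$, equivalently $|\widehat A(0)| = (2\pi)^{-1/2}$, via an interpolation/Hölder argument. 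Using $\|\widehat A\|_\infty \leq (2\pi)^{-1/2}$ to compare $|\widehat A|^{5/2}$ with $|\widehat A|^2$ on $\{|\widehat A| \geq c\}$ and a complementary frequency bound derived as in \eqref{est-R_eps_c-nd}, I would interpolate between $\|A\|_{L^{2/3}}$ (controlled by $P_\eps^{(4)}$) and a weighted Fourier norm of $A$ (controlled by $N_\eps^{(4)}$) to deduce an inequality of the form $(P_\eps^{(4)})^a (N_\eps^{(4)})^b \gtrsim 1$ for explicit $a,b>0$, which then yields $E_\eps^{(4)}[u] \gtrsim 1$ uniformly in $\eps$ and $u$.

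\textbf{Main obstacle.} The principal difficulty is the absence of a support bound on $u$: the region $\{q_{\eps,\eta}\geq 1\}$ where the nonlocal bound is effective depends on $u$ through $\widehat A$, so it is delicate to combine it with the mass constraint $\widehat A(0)=(2\pi)^{-1/2}$, which encodes only local information near $\xi_1=0$. To close this gap I would either (a) reduce to the case of bounded support by a truncation/concentration-compactness argument, exploiting that mass escaping to infinity costs additional perimeter and stray field energy, or (b) work entirely in Fourier variables, combining the pointwise bound $|\widehat A|\leq (2\pi)^{-1/2}$ with a low-frequency estimate coming from the continuity of $\widehat A$ (since $A\in L^1$) and a high-frequency estimate from \eqref{est-R_eps_c-nd}, thereby decoupling the region of integration from $u$ and allowing the interpolation to proceed uniformly.
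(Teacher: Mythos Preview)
Your upper bound argument is essentially the paper's: fix a smooth rotationally symmetric profile, split frequency space into $\DD=\{|\xi'|\geq 1+\xi_1^2\}$ and its complement, apply the Bessel decay \eqref{app-prop-ft-n-decay} on $\DD$, and on $\DD^c$ expand $\widehat u$ via Lemma~\ref{lem-app-ft} and integrate in $\xi'$ using Lemma~\ref{lem-intform}. Since $\gamma_n(\eps)=1$ for $n\geq 4$, both pieces are $O(1)$.

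For the lower bound you take a substantially different route from the paper, and the key step is not justified. The paper argues by contradiction: suppose the infimum vanishes along some sequence, so that both $P_\eps^{(4)}[u_\eps]\to 0$ and $N_\eps^{(4)}[u_\eps]\to 0$. The $s=0$ instance of the H\"older estimate \eqref{proof-cpt-N_geq4}, together with the complementary bound \eqref{est-R_eps_c-nd} and the perimeter-based tail control, then forces $\|\widehat{A_\eps}\|_{L^2}\to 0$, hence $\|A_\eps\|_{L^2}\to 0$; bounded support gives $\|A_\eps\|_{L^1}\to 0$, contradicting $\|A_\eps\|_{L^1}=1$. No interpolation between a physical-space quantity and a Fourier quantity is needed, and the perimeter is used only through the crude tail bound \eqref{con-2}, not through the slicewise isoperimetric inequality.

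Your proposed direct route --- an inequality of the form $\big(\int A^{2/3}\big)^a\big(\int_{\{q\geq 1\}}\xi_1^2|\widehat A|^{5/2}\big)^b\gtrsim \|A\|_{L^1}$ with constants independent of $u$ and $\eps$ --- is not a standard interpolation and I do not see how to derive it: the two controlled quantities live in incompatible spaces (an $L^{2/3}$-type functional in physical space versus a weighted $L^{5/2}$ norm in Fourier space), and the Fourier integral is taken over the $u$-dependent set $\{q_{\eps,\eta}\geq 1\}$, which is exactly the difficulty you flag. The remedies you sketch (concentration-compactness, or a purely Fourier-side argument) are reasonable programs but are not carried out. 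The paper sidesteps all of this with the two-line contradiction above, which only needs the $L^2$ bounds you already cite from Lemma~\ref{lem-compactness}. Note finally that the paper's proof, like the estimates you invoke, implicitly relies on the bounded-support hypothesis inherited from Lemma~\ref{lem-N-low}; you were right to single this out as the essential obstacle.
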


\begin{proof}
Assume that the infimum in \eqref{thm-scaling-eq_inf} for
$n=4$ is zero. By \eqref{N_ngeq4} and \eqref{proof-cpt-N_geq4} (for $s=0$) this
would imply that $\Vert \hat{A_\eps}\Vert_{L^2}$ converges to zero, i.e.\
$\Vert A_\eps\Vert_{L^2}\rightarrow 0$ and since the support is bounded, also $\Vert A_\eps\Vert_{L^1}\rightarrow 0$. But this would contradict the assumption
that $\Vert A_\eps\Vert_{L^1}=1$. Hence, the infimum is strictly positive.

  \medskip
  
For the upper bound in \eqref{thm-scaling-eq_inf}, it is enough to construct a
  sequence with uniformly bounded energy. So let $n\geq 4$ and
  $A\in C_c^\infty(\R)$ with $\Vert A\Vert_{L^\infty}\leq 1$ and let $u$
  be given as in Lemma \ref{lem-app-ft}.  Then obviously the perimeter
  $P_\eps^{(n)}[u]$ is bounded.  For the nonlocal energy we can estimate as for
  \eqref{calc_ub_3d_domain1} for $\sig := 1+|\xi_1|^2$
  \begin{align*}
    \int_{\R} \int_{|\xi'|\geq \sig} \frac{\xi_1^2}{\eps^2\xi_1^2 + |\xi'|^2} |\hat{u}(\xi)|^2 \dx\xi' \dx\xi_1
    \ \lesssim \ 1\, .
  \end{align*}
 For the remaining part of the nonlocal energy, we note that by Lemma
  \ref{lem-app-ft} 
  \begin{align*} %
    |\hat u(\xi)|^2 
    \ &= \ \frac{\ome_{n-1}^2}{(2\pi)^n}\sum_{k=0}^\infty \frac{(-1)^k}{2^{2k+\frac{n-1}{2}}}  \sum_{\ell=0}^k \frac{\hkk{|\xi'|^{2k}} S_\ell(\xi_1) S_{k-\ell}(\xi_1)}{\ell!(k-\ell)!\Gam(\ell+1+\frac{n-1}{2})\Gam(k-\ell+1+\frac{n-1}{2})}\, ,
  \end{align*}
  where we used again the notation
  $S_\ell := (A^{\frac{2\ell}{n-1}+1})^{\wedge}$.  By Lemma \ref{lem-intform}
  for $\beta = \sig$ we get for small $\eps$
  \begin{align*}
    \int_{|\xi'|\leq \sig} \frac{\xi_1^2 |\xi'|^{2k}}{\eps^2\xi_1^2 + |\xi'|^2} \dx \xi'                                                                                            \ &= \ \frac{n-1}{n-3+2k}\:\ome_{n-1}\  \xi_1^{2} \sig^{n-3+2k} + O(\eps |\xi_1|^3 \sig^{n-4+2k})\, .
  \end{align*}
Combining \hkk{the above two identities} we can estimate
  \begin{align*}
    \int_{|\xi'|\leq \sig} \frac{\xi_1^2}{\eps^2\xi_1^2+|\xi'|^2} |\hat u(\xi)|^2 \dx\xi' 
\ &\sim \  \xi_1^2 \sig^{n-3} |\hat{u}(\xi_1,|\xi'|=\sig)|^2  \\
\ &\sim \ \xi_1^2 \sig^{-2} \Big| \int_{\spt(A)} e^{-i x\cdot\xi} |{A}(x_1)|^{\frac{1}{2}} J_{\frac{n-1}{2}}(\sig|{A}(x_1)|^\frac{1}{n-1})\dx x_1 \Big|^2\, ,
  \end{align*}
 where the last step is due to the calculation in Lemma \ref{lem-app-ft}.
  We decompose
  $\R\times\spt(A) = \{ (\xi_1,x_1)\sd \sig|{A}(x_1)|^\frac{1}{n-1}\geq 1 \}
  \cup \{ (\xi_1,x_1)\sd \sig|{A}(x_1)|^\frac{1}{n-1}\leq 1 \}$ and since for
  $\alpha\in \frac12\mathbb{N}$ we have $|J_\alpha(t)|\lesssim t^{-\frac12}$ for
  $t\geq 1$ and $t^{\alpha}$ for $t\leq 1$ we get
  \begin{align*}
    \int_\R &\int_{|\xi'|\leq \sig} \frac{\xi_1^2}{\eps^2\xi_1^2+|\xi'|^2} |\hat u(\xi)|^2 \dx\xi' \dx\xi_1 \\
 \ &\sim \ \int_\R \xi_1^2 \sig^{-2} \Big| \int_{\spt(A)} e^{-i x_1\cdot\xi_1} |A(x_1)|^\frac12 J_{\frac{n-1}{2}}(\sig|{A}(x_1)|^\frac{1}{n-1})\dx x_1 \Big|^2  \dx\xi_1 \\
    \ &\lesssim \ |\spt A|^2 \int_{\R} \xi_1^2 \sig^{-3}  \dx\xi_1  + \int_\R \xi_1^2 \sig^{n-3}|\widehat A(\xi_1)|^2\dx\xi_1 \, .
  \end{align*}
These integrals are finite since $\xi_1^2(1+\xi_1^2)^{-3}$ is integrable and $A$ is smooth.
\end{proof}
We note that the $\dot{H}^\frac12-$norm of $A$ can also be expressed (up to a
constant which only depends on $s$ and the space dimension, see
\cite[Prop. 3.4]{Nezza2012}) as
\begin{align*}
  \Vert A\Vert_{\dot{H}^\frac12(\R^n)}^2 \ = \ C_{s,n}\int_{\R^n}\int_{\R^n}
\frac{|A(x)-A(y)|^2}{|x-y|^{n+1}}\dx x\dx y
\end{align*}
With this identity we prove the following
auxiliary estimate that has been used in the proof of Theorem \ref{thm-limit}:
\begin{lemma}\label{app-lem-H12-shifting}
  Let $\phi,\psi \in H^\frac{1}{2}(\mathbb{R})$.  Assume that
    $A :=\phi+\psi$ with $\phi,\psi \geq 0$ with
    $\{\phi > 0\} \subset (-\infty,0]$ and $\{\psi>0\} \subset [\tau,\infty)$ for
    some $\tau \geq 0$.  Then
  \begin{align*}
    \Vert A\Vert_{\dot{H}^\frac12(\R)}^2
    \ > \ \Vert A_t\Vert_{\dot{H}^\frac12(\R)}^2\, ,
  \end{align*}
  where $A_t(x):= \phi(x)+\psi(x+t)$ for $t\in (0,\tau]$.
\end{lemma}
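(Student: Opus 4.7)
The plan is to exploit the double-integral representation of the homogeneous $\dot H^{1/2}(\R)$--norm together with the disjointness of the supports of $\phi$ and $\psi$. Concretely, for $n=1$ the identity recalled just before the statement reads
\begin{align*}
  \NNN{f}{\dot H^{1/2}(\R)}^2 \ = \ C_{1/2,1} \int_\R \int_\R \frac{|f(x)-f(y)|^2}{|x-y|^2} \dx x \dx y.
\end{align*}
The key observation is that since $\{\phi>0\}$ and $\{\psi>0\}$ are disjoint we have $\phi(x)\psi(x)=0$ pointwise; hence the mixed term $2(\phi(x)-\phi(y))(\psi(x)-\psi(y))$ in the expansion of $|A(x)-A(y)|^2$ simplifies to $-2[\phi(x)\psi(y)+\phi(y)\psi(x)]$. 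By symmetry of the integration domain, I would therefore obtain
\begin{align*}
  \NNN{A}{\dot H^{1/2}}^2 \ = \ \NNN{\phi}{\dot H^{1/2}}^2 + \NNN{\psi}{\dot H^{1/2}}^2 - 4 C_{1/2,1} \int_\R \int_\R \frac{\phi(x)\psi(y)}{|x-y|^2} \dx x \dx y.
\end{align*}

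Next I would apply the same decomposition to $A_t=\phi+\psi(\cdot+t)$. Since $t\in(0,\tau]$, the supports of $\phi$ and $\psi(\cdot+t)$ remain contained in $(-\infty,0]$ and $[\tau-t,\infty)\subset[0,\infty)$ respectively, so they are still (essentially) disjoint, and the same expansion applies. Translation invariance of $\dot H^{1/2}$ gives $\NNN{\psi(\cdot+t)}{\dot H^{1/2}}=\NNN{\psi}{\dot H^{1/2}}$, so the diagonal contributions cancel in the difference and only the cross terms remain. After changing variables $y\mapsto y-t$ in the cross term for $A_t$ I would arrive at
\begin{align*}
  \NNN{A}{\dot H^{1/2}}^2 - \NNN{A_t}{\dot H^{1/2}}^2 \ = \ 4 C_{1/2,1} \int_\R \int_\R \phi(x)\psi(y) \bigg[ \frac{1}{|x-y+t|^2} - \frac{1}{|x-y|^2} \bigg] \dx x \dx y.
\end{align*}

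Finally, to conclude strict positivity I would use that on the support of the integrand we have $x\leq 0 \leq \tau \leq y$, so $y-x\geq\tau\geq t>0$. Consequently $0<y-x-t<y-x$, i.e.\ $|x-y+t|<|x-y|$ pointwise on $\{\phi(x)>0\}\times\{\psi(y)>0\}$, so the bracket is strictly positive there. Since $\phi,\psi\geq 0$ are nontrivial on sets of positive measure (otherwise the statement is vacuous), the integral is strictly positive, giving the desired strict inequality. The only slightly delicate point is justifying the pointwise manipulation $\phi(x)\psi(x)=0$ and the disjointness of supports after shifting; this is immediate from the hypotheses $\{\phi>0\}\SUS(-\infty,0]$, $\{\psi>0\}\SUS[\tau,\infty)$ and $t\leq\tau$, so no regularity issue arises.
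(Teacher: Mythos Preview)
Your proof is correct and rests on the same key observations as the paper's: the Gagliardo double-integral representation of $\dot H^{1/2}$, the disjoint-support identity $\phi(x)\psi(x)=0$ to reduce the cross term to $-2[\phi(x)\psi(y)+\phi(y)\psi(x)]$, and the pointwise monotonicity of the kernel $|x-y+t|^{-2}$ on $\{\phi>0\}\times\{\psi>0\}$. The only difference is cosmetic: the paper differentiates $t\mapsto\NNN{A_t}{\dot H^{1/2}}^2$ and shows the derivative is negative on $(0,\tau)$, whereas you compute the finite difference $\NNN{A}{\dot H^{1/2}}^2-\NNN{A_t}{\dot H^{1/2}}^2$ directly for fixed $t$. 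Your route is marginally more elementary (no need to justify differentiation under the integral), while the paper's gives the slightly stronger statement of strict monotonicity in $t$; for the lemma as stated both are adequate.
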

\begin{proof}
  A direct calculation shows that
\begin{align*}
  \frac{\dx}{\dx t} \int_\R \int_\R \frac{|A_t(x) - A_t(y)|^2}{|x-y|^{2}} \dx x\dx y
     &= \  - 4 \frac{\dx}{\dx t} \int_\R \int_\R \frac{\phi(x)\psi(y+t)}{|x-y|^{2}}\dx x\dx y\, ,
\end{align*} %
using that $\phi$ and $\psi$ have disjoint support for $t\leq \tau$.  Without
loss of generality we can assume $\tau = \dist(\{\phi > 0\},\{\psi > 0\}).$ With
the change of variables $z = y+t$ we have
\begin{align*}
  \frac{\dx}{\dx t} \int_\R \int_\R \frac{\phi(x)\psi(y+t)}{|x-y|^{2}}\dx x\dx y
  \ &= \ \frac{\dx}{\dx t} \int_\R \int_\R \frac{\phi(x)\psi(z)}{|x-z+t|^{2}}\dx x\dx z \\
  \ &= \ -2\int_{\supp(\psi)} \int_{\supp(\varphi)} (x-z+t)\frac{\phi(x)\psi(z)}{|x-z+t|^{4}}\dx x\dx z\, .
\end{align*}
Since $\phi,\psi\geq 0$ and $x-z\leq -\tau$, the above expression stays strictly positive as long as $t\in (0,\tau)$.
\end{proof}

\textbf{Acknowledgements:} \dstwo{We are grateful to the referees for their
  careful reading of the manuscript and their useful comments.}  The authors
thank B. Brietzke for valuable comments and discussions.  H.~Kn\"upfer was
partially supported by the German Research Foundation (DFG) by the project
\#392124319 and under Germany's Excellence Strategy – EXC-2181/1 – 390900948.

\bibliographystyle{abbrv} 

\end{document}